\newtheorem{theorem}{Theorem}
\newtheorem{lemma}{Lemma}
\newtheorem{definition}{Definition}
\newtheorem{thm}{Theorem}[section]
\newtheorem{cor}{Corollary}
\newtheorem{prob}[thm]{Problem}
\newtheorem*{rmk*}{Remark}
\newtheorem*{claim*}{Claim}
\DeclareMathOperator{\diam}{diam}
\ifodd\value{page}
  {\small BENJAMIN BEDERT}
\title{On a problem of Erd\H{o}s and S\'ark\"ozy about sequences with no term dividing the sum of two larger terms}
\author{Benjamin Bedert\\
\tiny Mathematical Institute, University of Oxford}
\begin{document}
\maketitle
\begin{abstract}
In 1970, Erd\H{o}s and S\'ark\"ozy wrote a joint paper studying sequences of integers $a_1<a_2<\dots$ having what they called property P, meaning that no $a_i$ divides the sum of two larger $a_j,a_k$. In the paper, it was stated that the authors believed, but could not prove, that a subset $A\subset[n]$ with property P has cardinality at most $|A|\leqslant \left\lfloor \frac{n}{3}\right\rfloor+1$. In 1997, Erd\H{o}s offered \$100 for a proof or disproof of the claim that $|A|\leqslant \frac{n}{3}+C$, for some absolute constant $C$. We resolve this problem, and in fact prove that $|A|\leqslant\left\lfloor \frac{n}{3}\right\rfloor+1$ for $n$ sufficiently large.
\end{abstract}

\tableofcontents
\section{Introduction}
Let us begin with the following definition from a 1970 paper \cite{MR265312} of Erd\H{o}s and S\'ark\"ozy.
\begin{definition}
\normalfont Let $A\subset\mathbf{N}$. We say that $A$ has \textit{property P} if there are no three numbers $x,y,z\in A$ with $z<x,y$ and $z|x+y$.
\end{definition}
So a sequence has property P precisely if it contains no term which divides the sum of two larger terms.
The main result of \cite{MR265312} states that an infinite sequence $A$ of positive integers having property P must have density 0. The density of infinite sequences with property P has been studied in greater detail by various authors, see \cite{MR2115998},\cite{MR3607502},\cite{MR1816258}. In the original paper \cite{MR265312}, Erd\H{o}s and S\'ark\"ozy mention the following finite version of the problem.

\begin{prob}[Erd\H{o}s - S\'ark\"ozy \cite{MR265312}]
Let $n\geqslant 1$ be an integer and $A\subset\{1,2,\dots,n\}$ have property P. Must it be the case that $$|A|\leqslant\bigg\lfloor\frac{n}{3}\bigg\rfloor + 1 ?$$
\label{prob1}
\end{prob}
The paper mentions that Szemer\'edi proved that if a set $A\subset[n]$ has size $|A|>\left\lfloor \frac{n}{3}\right\rfloor+1$, then it contains distinct $x,y,z\in A$ with $z|x+y$ and $\frac{x+y}{z}\neq 2$. This conclusion is significantly weaker than what would be needed to contradict $A$ having property P however, and the author is not aware of any results in the literature improving on this partial result.
In fact, Erd\H{o}s mentioned this particular problem in a large number of his open problem papers \cite{MR0360509, MR532690,MR1018622,MR0374075,MR593525,MR1050324,MR1110810,MR1601631,MR1439273,MR1215590} written between 1970 and 1996. In the 1973 paper \cite{MR0360509} and several later papers, Erd\H{o}s asks for a proof of the slightly weaker claim that $|A|\leqslant \frac{n}{3}+O(1)$ if $A\subset[n]$ is a set with property P. In his final open problems paper \cite{MR1601631}, Erd\H{o}s offers $\$100$ for a resolution of this problem. 
\begin{prob}[Erd\H{o}s]
Is there is an absolute constant $C$ such that if $n\!\geqslant\!1$ and $A\subset\{1,2,\dots,n\}$ has property P, then $|A|\leqslant \frac{n}{3} + C?$
\label{prob2}
\end{prob} In some of these papers, Erd\H{o}s states that the bound $|A|\leqslant \left\lfloor \frac{n}{3}\right\rfloor +1$, if true, is optimal in light of the example $A=\left\{\left\lceil \frac{2n}{3}\right\rceil,\dots,n\right\}$. This is a typo however, and the exact bound should be $|A|\leqslant \left\lceil \frac{n}{3}\right\rceil$ with the corresponding tight example being $A=\left\{\left\lfloor \frac{2n}{3}\right\rfloor+1,\dots, n\right\}$ which is easily seen to have property P.
The previous best known bound existing in the literature seems to be Erd\H{o}s's observation that $|A|\leqslant \left\lceil\frac{n}{2}\right\rceil$ which is more or less trivial.\footnote{A set with property P certainly cannot contain two numbers with one dividing the other, and it is well-known and not hard to prove that a subset of $[n]$ with this property has size at most $\left\lceil\frac{n}{2}\right\rceil$.}

\bigskip

We will establish the following resolution of these problems.
\begin{theorem}
There is an absolute constant $C$ such that for all $n\in\mathbf{N}$, if $A\subset\{1,2,\dots,n\}$ has property P, then
$|A|\leqslant  \frac{n}{3}+C.$
\label{theo1}
\end{theorem}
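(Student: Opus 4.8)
The plan is to start from the near-trivial bound $|A|\le n/2+O(1)$ and to improve the constant $1/2$ to $1/3$ by analysing which residue classes $A$ can occupy modulo its smallest few elements.

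First I would fix $a:=\min A$ and dispose of the easy range: if $a\ge 2n/3-C$ then $|A|\le n-a+1\le n/3+C+1$, so assume $a<2n/3-C$. The structural observation I would rely on is that for \emph{any} $z\in A$, property P forbids two elements of $A$ exceeding $z$ from summing to $0$ modulo $z$; hence the residues modulo $z$ realised by $A\cap(z,n]$ form an \emph{antisymmetric} set $R_z$ — it contains at most one of $r,-r$ for each $r\not\equiv -r$, and it meets each of the self-antipodal classes $0$ and (when $z$ is even) $z/2$ at most once. Applying this with $z=a$ covers $A$ by $\{a\}$ together with at most $a/2+O(1)$ arithmetic progressions of common difference $a$ inside $(a,n]$, so $|A|\le n/2+O(1)$.

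To gain the remaining factor I would bring in a second small element $b\in A$ (a careful choice of $b$, rather than just the second smallest element, may be needed — see below) and set $g:=\gcd(a,b)$. Each class $P_r:=\{x\in A:x\equiv r\ (\mathrm{mod}\ a)\}$ is an arithmetic progression of difference $a$; reducing it modulo $b$ places it inside a single coset of $g\mathbf{Z}/b\mathbf{Z}$, and once $P_r$ is close to full this progression sweeps out essentially all of that coset — at least when $\mathrm{lcm}(a,b)=O(n)$, so that $P_r$ runs through a full period modulo $b$. Property P with divisor $b$ now says that if $r+r'\equiv0\ (\mathrm{mod}\ g)$ and $P_r,P_{r'}$ are both close to full, then two elements of $A$ larger than $b$ are antipodal modulo $b$, a contradiction. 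Thus, in the auxiliary graph $H$ on $R_a$ with $r\sim r'$ whenever $r+r'\equiv0\ (\mathrm{mod}\ g)$, the set of near-full classes is independent; since $\sum_r|P_r|=|A|-1$ and every non-near-full class contributes only a bounded fraction of $n/a$, a bound of the shape $\alpha(H)\le a/3+o(a)$ would give $|A|\le n/3+o(n)$, and then re-running the estimate with a third small divisor, once the residue picture is essentially forced, should reduce the error down to an absolute constant.

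I expect the main obstacle to be making the last two steps robust. When $a$ is bounded by an absolute constant, $R_a$ is one of finitely many antisymmetric sets and the dense ones have to be excluded by hand, using property P directly among the $O(1)$ classes involved (already $a=3$ forces $A$ into a single class modulo $3$ up to $O(1)$ exceptions). When the small elements of $A$ share large common factors — so that every reasonable choice of $b$ has $\mathrm{lcm}(a,b)$ much larger than $n$, the coset-sweeping argument fails, and $H$ has too few edges — one should instead treat the classes modulo $a$ and modulo $b$ as almost independent and count residues modulo $\mathrm{lcm}(a,b)$ via the Chinese Remainder Theorem. Coordinating these regimes (guaranteeing long enough progressions and small enough gcds, possibly after passing to a subset of $A$, and choosing $b$ cleverly when $a\mid b$) is where the real work lies. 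Conceptually, the key point is that obtaining the sharp constant $1/3$, rather than some $c\in(1/3,1/2)$, seems to require using property P for pairs of \emph{large} elements of $A$, so no argument working purely modulo $\min A$ can suffice.
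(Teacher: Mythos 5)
Your proposal takes a genuinely different route from the paper, but it is a research plan rather than a proof, and the gaps you flag are not minor side-issues: they are precisely where the real content of the problem lives.

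The paper's argument does not work modulo the small elements of $A$. Instead it splits on the density of $A$ in $\left(\frac{2n}{3},n\right]$, applies a Freiman-type $3k-4$ theorem (Theorem \ref{freimain}) to sumsets such as $A_{(\frac{2}{3},1]}+A_{(\frac{2}{3},1]}$ and $A_{(\frac{1}{2},1]}^{1(3)}+A_{(\frac{1}{2},1]}^{2(3)}$ to extract long arithmetic progressions or large sumsets, and then uses multiplicative embeddings (dyadic maps $a\mapsto 2^{j_a}a$, ternary maps $a\mapsto 3^{m_a}a$) to inject $A\cap\left[\frac{2n}{3}\right]$ into the middle third $\left(\frac{n}{3},\frac{2n}{3}\right]$, where disjointness from the quotient set of the structured sumset gives the count. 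This is essentially dual to what you describe: it exploits additive structure among \emph{large} elements to exclude small elements, rather than exploiting residue structure modulo small elements to exclude large ones.

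The concrete gaps in your sketch are these. First, the independence bound $\alpha(H)\leqslant a/3+o(a)$ is asserted, not argued, and it is not clear it holds uniformly; the graph $H$ depends on the choice of $b$ through $g=\gcd(a,b)$, you do not specify how $b$ is chosen, and when $g$ is large $H$ is sparse and has independence number close to $a/2$, which only recovers the trivial bound. Second, the coset-sweeping step needs $\mathrm{lcm}(a,b)\leqslant n$, and you correctly note that when the small elements of $A$ share large common factors every admissible $b$ fails this; the proposed CRT workaround is a sentence, not an argument, and this regime cannot be dismissed since it includes sets where $A\cap[n/2]$ is concentrated in a single arithmetic progression. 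Third, even granting all of the above you only get $n/3+o(n)$, and passing to $n/3+O(1)$ via a ``third small divisor'' is vaguer still; the paper needs its delicate induction hypothesis \eqref{inductiontheo} and many explicit numerical estimates to close that gap. Finally, and most tellingly, you yourself observe at the end that the sharp constant seems to force one to use property P on pairs of \emph{large} elements; your framework, built around residues modulo $\min A$ and one or two further small divisors, does not actually do this, and no mechanism is proposed by which it could. That observation points directly at the sumset/Freiman machinery which the paper in fact uses, and which your outline does not contain.
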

\begin{theorem}
For all sufficiently large $n\in \mathbf{N}$, if $A\subset\{1,2,\dots,n\}$ has property P, then
$|A|\leqslant  \left\lceil\frac{n}{3}\right\rceil.$
Moreover, this bound is tight for all such $n$ since $\left\{\left\lfloor \frac{2n}{3}\right\rfloor+1,\dots, n\right\}$ is a subset of $[n]$ with property P and size $\left\lceil\frac{n}{3}\right\rceil$.
\label{theo2}
\end{theorem}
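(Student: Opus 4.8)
The tightness half is quick: if $z<x,y$ all lie in $B=\{\lfloor 2n/3\rfloor+1,\dots,n\}$ then $z>2n/3$, so $2z<x+y\leqslant 2n<3z$ (the same inequality handles $x=y$, where $x+y=2x>2z$), and hence $z\nmid x+y$; moreover $|B|=n-\lfloor 2n/3\rfloor=\lceil n/3\rceil$. So I focus on the upper bound. Let $A\subseteq[n]$ have property P; we may assume $|A|\geqslant 3$, and then $1,2\notin A$, since if $1\in A$ then $1\mid 2x$ for any larger $x\in A$, and if $2\in A$ then $2\mid 2x$ similarly, each violating property P. Put $d:=\min A\geqslant 3$. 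The structural observation I would use throughout is that for every $z\in A$, reducing $A\cap(z,n]$ modulo $z$ yields a set $S_z\subseteq\mathbf Z/z\mathbf Z$ with $S_z\cap(-S_z)=\varnothing$ (so $0\notin S_z$, and $z/2\notin S_z$ when $z$ is even), because property P forbids $x+y\equiv 0\pmod z$ for all $x,y\in A\cap(z,n]$, the diagonal $x=y$ included; in particular $|S_z|\leqslant\lfloor(z-1)/2\rfloor$.

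I would then split on the size of $d$. If $d\geqslant\lfloor 2n/3\rfloor+1$ then $|A|\leqslant n-d+1\leqslant\lceil n/3\rceil$ at once. If $n/2<d\leqslant\lfloor 2n/3\rfloor$, then $2d>n$, so each residue class modulo $d$ meets $(d,n]$ in at most one point, and the residues that do so are exactly $\{1,\dots,n-d\}$; hence $|A\cap(d,n]|=|S_d|\leqslant\lfloor(d-1)/2\rfloor$, so $|A|\leqslant 1+\lfloor(d-1)/2\rfloor$, and a short computation over the three residue classes of $n$ modulo $3$ gives $1+\lfloor(\lfloor 2n/3\rfloor-1)/2\rfloor\leqslant\lceil n/3\rceil$. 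So only the range $3\leqslant d\leqslant n/2$ remains.

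This last range is the substantive one, and it is where Theorem~\ref{theo1} comes in; here a class modulo $d$ may contain $\approx n/d$ points of $(d,n]$ and $S_d$ may be a ``spread-out'' sum-free set of size $\approx d/2$, so the crude count above only recovers the trivial $|A|\lesssim n/2$. To push below $n/3$ I would combine three ingredients. \emph{(i) Peeling:} deleting $\min A$ repeatedly keeps property P and roughly halves the density of admissible residues, which is effective as long as the $\operatorname{lcm}$ of the deleted elements stays $\lesssim n$ (two coprime deletions already yield $|A|\lesssim n/4$). \emph{(ii) Fourier analysis:} once peeling stalls, $A$ is confined to boundedly many residue classes of a small modulus and its bulk is Fourier-spread, and one runs the usual dichotomy for sum-free-type sets --- a large nontrivial Fourier coefficient of $1_A$ forces correlation with an arithmetic progression, out of which a forbidden triple $z\mid x+y$ is read off, while Fourier-uniformity makes a circle-method count of such triples positive once the density exceeds $\tfrac13$ --- to exclude density near or above $\tfrac13$ in this regime. \emph{(iii) Stability:} by Theorem~\ref{theo1} it suffices to rule out $|A|=\lceil n/3\rceil+j$ for $1\leqslant j\leqslant C$, and a quantitative form of (i)+(ii) shows that no such near-extremal $A$ can have $d\leqslant n/2$ --- near-extremizers are forced to be one of the few interval configurations $\{k,\dots,k+\lceil n/3\rceil-1\}$, all of which satisfy $d>n/2$ --- giving the required contradiction.

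The main obstacle is step (ii): proving that a property-P set of density close to $\tfrac13$ cannot be spread out at all, i.e.\ must look essentially like the top-third interval. This is exactly what resisted Erd\H{o}s and S\'ark\"ozy and carried the \$100, and where the Fourier-analytic work is concentrated; by comparison the analysis in the other ranges is elementary sum-free and interval bookkeeping.
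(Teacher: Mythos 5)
Your tightness verification and the reductions for $d=\min A>n/2$ are correct and match elementary observations the paper also relies on (e.g.\ the bound $|A\cap(s,2s]|\leqslant\lfloor(s-1)/2\rfloor$ appears in Case 1). But for the substantive range $3\leqslant d\leqslant n/2$, which you rightly identify as where the whole difficulty lives, what you have written is not a proof: items (i)--(iii) are a research plan, not an argument, and you say so yourself. A few concrete problems with that plan. First, the peeling step does not cleanly halve anything: the residues of $A\cap(d,n]$ modulo $d$ form a set $S_d$ with $S_d\cap(-S_d)=\varnothing$, which gives density $\leqslant 1/2$ only for large $d$; two successive peelings combine via CRT only when the peeled moduli are coprime, and there is no reason the second-smallest element should be coprime to the first, so ``two coprime deletions give $\lesssim n/4$'' is not a step you get for free. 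Second, the proposed Fourier dichotomy is aimed at a single linear congruence $x+y\equiv 0\ (\mathrm{mod}\ z)$, but property P is a union of such conditions over \emph{all} $z\in A$; the ``circle-method count of solution triples'' you invoke does not have an obvious positivity statement at density $\frac13$, and a large Fourier coefficient of $1_A$ along a progression does not by itself hand you a forbidden triple with $z<x,y$ in $A$. Third, your stability step (iii) invokes Theorem~\ref{theo1} as a black box, but in the paper Theorems~\ref{theo1} and~\ref{theo2} are both corollaries of the same inductive statement (Theorem~\ref{theo5}); Theorem~\ref{theo1} is exactly the $\$100$ problem, so using it to prove Theorem~\ref{theo2} without proving it is circular for the purposes of this exercise. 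And even granting it, the claim that near-extremizers with $d\leqslant n/2$ are ``forced to be one of the few interval configurations'' is itself a nontrivial stability theorem you have not sketched.

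For comparison, the paper's route is quite different and does not use Fourier analysis or peeling of $\min A$. It inducts on $n$ and splits on $\lvert A_{(2/3,1]}\rvert = \lvert A\cap(\tfrac{2n}{3},n]\rvert$ rather than on $\min A$. The engine is a generalization of Freiman's $3k-4$ theorem (Bardaji--Grynkiewicz, Theorem~\ref{bardajigrynkiewicz}): either a sumset of a slice of $A$ expands, or it contains a long arithmetic progression. That long progression, being inside $A_{(\frac12,1]}+A_{(\frac12,1]}$, provides many multiples of many residues, which rules those residues out of $A$ by property P. The second key device is a family of injective ``rescaling'' maps (multiplying by powers of $2$ or $3$, and a more elaborate map for $B_2,B_3,Z$) that carry all of $A\cap[\tfrac{2n}{3}]$ into $(\tfrac n3,\tfrac{2n}3]$ while preserving the property that images are multiples of elements of $A$; the images are then forced to be disjoint from the ``multiple-rich'' subset of $(\tfrac n3,\tfrac{2n}3]$ produced by the Freiman progression, and counting inside $(\tfrac n3,\tfrac{2n}3]$ closes the argument. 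This is a genuinely combinatorial/additive-structural proof; none of the Fourier-analytic ingredients you describe appear, and the hard ``density $<\tfrac13$'' step is handled by Freiman-type structure, not by a circle-method count.
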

Theorem \ref{theo2} in fact implies Theorem \ref{theo1} by choosing $C$ sufficiently large. In order to give a streamlined and relatively easy to read version of the argument, we make no effort to optimise the value of $C$.

\begin{flushleft}
\textbf{Acknowledgements.}
The author would like to thank Zachary Chase and his supervisor Ben Green for introducing him to the problem, for several helpful discussions, and for providing feedback on earlier versions of the paper. The author also gratefully acknowledges financial support from the EPSRC.
\end{flushleft}
\section{Notation}
We write $\mathbf{Z}$ for the set of integers and $\mathbf{N}$ for the set of positive integers.
For sets $X,Y\subset\mathbf{N}$ we define the sumset $X+Y=\{x+y:x\in X, y\in Y\}$, the difference set $X-Y=\{x-y:x\in X, y\in Y\}$ and for a rational number $q$, $q\!\cdot\!X$ denotes the dilated set $\{qx: x\in X\}$. For real numbers $\alpha\leqslant \beta$, we define $$(\alpha,\beta]\vcentcolon= \{m\in \mathbf{N}:\alpha<m\leqslant \beta\}$$ and similarly for other types of intervals. We also write $[\beta]$ for $[1,\beta]$. Throughout the paper, whenever we write the word `interval', we mean a set of consecutive integers. For a set $X\subset \mathbf{Z}$, we define $\diam X \vcentcolon= \max X-\min X$ and $\gcd_*(X)\vcentcolon= \gcd(X-X)$ is the greatest common divisor of all differences $x-x'$ with $x,x'\in X$. Equivalently, $\gcd_*(X)$ is the largest integer $d$ such that $X$ is contained in an arithmetic progression with common difference $d$.
\medskip

In our proofs of Theorems \ref{theo1} and \ref{theo2}, we will work with a set $A\subset [n]$ having property P. We will need to consider parts of $A$ lying in various subintervals of $[n]$ and hence we will use the following notation for real numbers $0\leqslant\alpha<\beta\leqslant 1$:
$$A_{(\alpha,\beta]} \vcentcolon= A\cap (\alpha n,\beta n],$$ and similarly for other types of intervals. The value of $n$ is hidden in this notation, but it will be clear from context. Finally, for a positive integer $q$ and a residue $a\!\!\mod q$, we write
$$A_{(\alpha,\beta]}^{a(q)} \vcentcolon=A_{(\alpha,\beta]}\cap(a+q\cdot\mathbf{N})= A\cap (\alpha n,\beta n]\cap(a+q\cdot\mathbf{N}).$$

\section{Preliminaries}
We begin with some easy but useful observations.
\begin{lemma}
If $A$ has property P, then
\begin{itemize}
    \item [(1)] $A$ is disjoint from $k\!\cdot\! A$ for any integer $k\geqslant 2$.
    \item [(2)] In particular, for any integer $k\geqslant 2$, the sets $A\,, k\!\cdot\! A\,, k^2\!\cdot\!A\,, k^3\!\cdot\!A,\dots$ are pairwise disjoint.
    \item [(3)]  $2\!\cdot\!A$ is disjoint from $k\!\cdot\! A$ for any integer $k\geqslant3$.
\end{itemize}

\label{basicInteger}
\end{lemma}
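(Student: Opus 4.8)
The plan is to reduce all three parts to a single elementary consequence of property P, namely that if $x,z\in A$ with $x>z$ then $z\nmid 2x$. This follows by applying the definition of property P to the triple with the two larger terms both taken to be $x$ and the smaller term taken to be $z$: since $x>z$, property P forbids $z\mid x+x=2x$. Call this observation $(\star)$. Essentially everything in the lemma is just $(\star)$ packaged in three different ways.

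Given $(\star)$, part (1) is immediate: if some $a$ lies in $A\cap k\cdot A$ with $k\geqslant 2$, write $a=ka'$ with $a'\in A$; then $a'<ka'=a$ and $a'\mid ka'=a$, hence $a'\mid 2a$, contradicting $(\star)$. Part (2) then follows by cancelling common powers of $k$: if $k^{i}\cdot A$ and $k^{j}\cdot A$ intersect for some $0\leqslant i<j$, then $k^{i}a=k^{j}a'$ for some $a,a'\in A$, so $a=k^{\,j-i}a'$ with $k^{\,j-i}\geqslant 2$, which is exactly the situation excluded by part (1) applied with the integer $k^{\,j-i}$ in place of $k$. For part (3), suppose $2a=ka'$ with $a,a'\in A$ and $k\geqslant 3$; then $2a=ka'\geqslant 3a'$, so $a>a'$, while $a'\mid ka'=2a$, and $(\star)$ is contradicted once more. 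Hence $2\cdot A$ and $k\cdot A$ are disjoint for every $k\geqslant 3$.

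I do not expect any genuine obstacle in this lemma; it is purely a matter of unwinding definitions, and $(\star)$ does all the work. The one point that must be handled correctly is the reading of property P: the two larger terms $x,y$ with $z\mid x+y$ are permitted to coincide, which is precisely what makes $(\star)$ available. (If one instead demanded three distinct elements, part (1) would already fail for sets such as $\{2,4\}$, so the reading with $x=y$ allowed is the intended one, and it is consistent with the extremal example $\{\lfloor 2n/3\rfloor+1,\dots,n\}$ recorded above.)
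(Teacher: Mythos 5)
Your proof is correct and takes essentially the same route as the paper: the paper's argument for (1) and (3) is precisely your observation $(\star)$ (that $z\nmid 2x$ for $x>z$ in $A$) applied inline, and (2) is again deduced from (1) by the same cancellation. Your explicit remark that the definition of property P must be read as permitting $x=y$ is a useful clarification — the paper uses this reading implicitly without comment — but it does not change the substance of the argument.
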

\begin{proof}
First, if $A\cap (k\!\cdot\! A)$ is non-empty for some $k\geqslant 2$, then there exist $a,a'\in A$ with $a = ka'$ so that $a>a'$ and $a'|2ka'=a+a$ contradicting that $A$ has property P. This proves $(1)$ from which $(2)$ follows immediately. For $(3)$, we need to show that $2\!\cdot\!A$ and $k\!\cdot\!A$ are disjoint for $k\geqslant 3$. If $(2\!\cdot\!A)\cap(k\!\cdot\!A)\neq \emptyset$, then there exist $a,a'\in A$ with $2a=ka'$ so $a>a'$ and $a'|ka'=a+a$ giving a contradiction.
\end{proof}
\begin{lemma}
Let $A\subset[n]$ have property P, let $k,a,q$ be positive integers and let $0\leqslant\alpha\leqslant1$. Suppose that $B$ is a set such that $k\cdot B$ consists exclusively of integer multiples of numbers in $A_{[\alpha]}=A\cap[\alpha n]$, that every number in  $k\cdot B$ is congruent to $a\!\!\mod q$ and that $I$ is an interval so that $k\cdot B\subset I$. Then
\begin{equation}
\left|B\right|+\left|\left(A_{(\alpha,1]}+A_{(\alpha,1]}\right)\cap I\cap(a+q\cdot \mathbf{N})\right|< \frac{|I|}{q}+1.
\label{basicmult}
\end{equation}
\label{basicmultl}
\end{lemma}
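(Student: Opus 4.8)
The plan is to realise the two quantities on the left of \eqref{basicmult} as sizes of disjoint subsets of $I\cap(a+q\cdot\mathbf{N})$ and then simply to count the latter. To set up, note first that since $k\geqslant 1$ the map $x\mapsto kx$ is injective, so $|B|=|k\cdot B|$; and by hypothesis every element of $k\cdot B$ lies in $I$ and is congruent to $a\bmod q$, so $k\cdot B\subseteq I\cap(a+q\cdot\mathbf{N})$. The set $\left(A_{(\alpha,1]}+A_{(\alpha,1]}\right)\cap I\cap(a+q\cdot\mathbf{N})$ is of course also contained in $I\cap(a+q\cdot\mathbf{N})$. Thus \eqref{basicmult} will follow once we show these two subsets are disjoint and that $|I\cap(a+q\cdot\mathbf{N})|<|I|/q+1$.

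The key step, and the only place property P is used, is the disjointness. Suppose for contradiction that some $m$ lies in both $k\cdot B$ and $A_{(\alpha,1]}+A_{(\alpha,1]}$, say $m=x+y$ with $x,y\in A_{(\alpha,1]}$. Since $m\in k\cdot B$, by hypothesis $m$ is an integer multiple of some $z\in A_{[\alpha]}$, so in particular $z\mid x+y$. Moreover $z\leqslant\alpha n<x$ and $z\leqslant\alpha n<y$, so $z<x,y$, while $x,y,z$ all lie in $A$; this contradicts property P. Hence $k\cdot B$ and $A_{(\alpha,1]}+A_{(\alpha,1]}$ are disjoint.

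It then remains to bound $|I\cap(a+q\cdot\mathbf{N})|$: partitioning the interval $I$ into $\left\lceil|I|/q\right\rceil$ blocks of at most $q$ consecutive integers, each block meets the residue class $a\bmod q$ in at most one point, so $|I\cap(a+q\cdot\mathbf{N})|\leqslant\left\lceil|I|/q\right\rceil<|I|/q+1$. Combining the two paragraphs, $|B|+\left|\left(A_{(\alpha,1]}+A_{(\alpha,1]}\right)\cap I\cap(a+q\cdot\mathbf{N})\right|$ is the size of a disjoint union inside $I\cap(a+q\cdot\mathbf{N})$, hence is at most $|I\cap(a+q\cdot\mathbf{N})|<|I|/q+1$, which is \eqref{basicmult}. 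I do not expect any genuine obstacle here; the only points requiring care are that the inequalities $z\leqslant\alpha n<x$ and $z\leqslant\alpha n<y$ are strict, so that property P really does apply to the triple $x,y,z$, and that the count of residues in $I$ is at most $\left\lceil|I|/q\right\rceil$, which is strictly less than $|I|/q+1$.
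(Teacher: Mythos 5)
Your proof is correct and follows essentially the same route as the paper's: identify $k\cdot B$ and $\left(A_{(\alpha,1]}+A_{(\alpha,1]}\right)\cap I\cap(a+q\cdot\mathbf{N})$ as disjoint subsets of $I\cap(a+q\cdot\mathbf{N})$ via property P, then bound $|I\cap(a+q\cdot\mathbf{N})|$ by $|I|/q+1$. You supply a few details the paper leaves implicit (the injectivity of $x\mapsto kx$ giving $|B|=|k\cdot B|$, the explicit verification that $z\leqslant\alpha n<x,y$, and the block-counting for the residue-class bound), but the argument is the same.
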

\begin{proof}
    We show that $k\cdot B$ and $\left(A_{(\alpha,1]}+A_{(\alpha,1]}\right)\cap I\cap(a+q\cdot \mathbf{N})$ are disjoint sets. If not, there exists some $b\in B$ so that $kb\in A_{(\alpha,1]}+A_{(\alpha,1]}$ contradicting that $A$ has property P as $kb$ is a multiple of some number in $A_{[\alpha]}$ by assumption. Hence, $k\cdot B$ and $\left(A_{(\alpha,1]}+A_{(\alpha,1]}\right)\cap I\cap(a+q\cdot \mathbf{N})$ are disjoint sets contained in $I\cap(a+q\cdot \mathbf{N})$. As $I$ is an interval, we have the bound $\left|I\cap(a+q\cdot \mathbf{N})\right|<\frac{|I|}{q}+1$ so \eqref{basicmult} follows.
\end{proof}

We will crucially make use of the following theorem, which is a lesser-known version of Freiman's $3k-4$ theorem. Freiman proved that if $S$ is a set of integers with doubling $|S+S|\leqslant 3|S|-4$, then $S+S$ contains an arithmetic progression of length $2|S|-1$. We shall need the following generalisation due to Bardaji and Grynkiewicz \cite{MR2684124}. Recall that for a set $S$ of integers, we define $\diam S \vcentcolon= \max S-\min S$ and $\gcd_*(S)\vcentcolon= \gcd(S-S)$ is the greatest common divisor of all differences $s-s'$ with $s,s'\in S$. One can see from this definition that the $\gcd_*$ of a set $S$ is in fact the largest integer $d$ such that $S$ is contained in an arithmetic progression with common difference $d$.

\begin{theorem}[Bardaji \& Grynkiewicz \cite{MR2684124}, Corollary 1.2]
Let $S,T$ be non-empty subsets of $\mathbf{Z}$ with $\diam T \leqslant \diam S$ and $\gcd_*(S+T)=1$. If
\begin{equation*}
    |S+T| \leqslant |S|+2|T|-4
\end{equation*}
and either $\gcd{}_*(S)=1$ or $|S+T|\leqslant 2|S|+|T|-3$,
then $S+T$ contains an arithmetic progression with common difference 1 and length $|S|+|T|-1$.
\label{bardajigrynkiewicz}
\end{theorem}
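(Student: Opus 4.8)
The statement is a Freiman ``$3k-4$'' theorem for two distinct summands, so my plan is to follow the classical template: normalise, cut down to the genuinely non-trivial parameter range, settle a base case via the description of pairs with minimal sumset, and then induct on $|T|$ by peeling an extreme element off the smaller set. Concretely, I would first translate so that $\min S=\min T=0$, and write $k=|S|$, $p=|T|$, $\ell=\diam S=\max S$, $m=\diam T=\max T$, so that $m\le\ell$ and $S+T\subseteq[0,\ell+m]$. Since $|S+T|\ge k+p-1$ always (order both sets and read off $k+p-1$ strictly increasing sums), the hypothesis $|S+T|\le k+2p-4$ forces $p\ge3$, with equality $|S+T|=k+p-1$ when $p=3$; one checks similarly that $k\ge2$ (a singleton $S$ is ruled out by the side conditions). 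Note $\gcd_*(S+T)=\gcd(\gcd_*(S),\gcd_*(T))=1$, and that the target AP, of length $k+p-1\le\ell+m+1$, does fit inside the ambient interval.

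\emph{Base case.} For $p=3$ — and more generally whenever $|S+T|=|S|+|T|-1$ — I would quote the classical characterisation over $\mathbf{Z}$: if $\min(|A|,|B|)\ge2$, then $|A+B|=|A|+|B|-1$ forces $A$ and $B$ to be arithmetic progressions with a common difference $d$, hence $S+T$ is an AP with difference $d$; since $\gcd_*(S+T)=1$ we get $d=1$, so $S+T$ is itself an interval of the required length.

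\emph{Induction.} Assuming the theorem for all admissible pairs with a strictly smaller second set, take $p\ge4$. The idea is to delete one of the two extreme elements of $T$ — form $T_+=T\setminus\{\max T\}$ or $T_-=T\setminus\{\min T\}$, re-translated to start at $0$ — apply the inductive hypothesis to $(S,T_\pm)$ to get an interval of length $k+p-2$ inside $S+T_\pm\subseteq S+T$, and then show the omitted layer $S+(\text{deleted element})$ abuts this interval and lengthens it by one. For this I must verify: (i) one of the two deletions inherits the counting hypothesis, i.e.\ removes at least two elements of the sumset so that $|S+T_\pm|\le|S|+2|T_\pm|-4$; and (ii) the side conditions survive — $\diam T_\pm\le\diam T\le\diam S$ is automatic, while if a deletion degenerates $\gcd_*$ one falls back on the alternative clause $|S+T|\le 2|S|+|T|-3$.

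\emph{Main obstacle.} The crux is exactly the bookkeeping in (i)–(ii): guaranteeing that \emph{some} endpoint-deletion of $T$ simultaneously drops $|S+T|$ by at least $2$ \emph{and} preserves all the hypotheses, and separately handling the rigid configurations in which neither deletion loses two elements — there one must argue directly that the layer $S+\min T$ (resp.\ $S+\max T$) already sits inside a long run of consecutive integers in $S+T$. The most delicate point is the case $\gcd_*(S)>1$, where $S$ is squeezed into an arithmetic progression of difference $d>1$ while $\gcd_*(T)$ is coprime to $d$: threading this situation (together with the asymmetric diameter hypothesis and the alternative doubling bound $2|S|+|T|-3$) through the induction, while still landing the sharp length $k+p-1$ against the sharp doubling $k+2p-4$, is what makes the argument more than a mechanical reprise of Freiman's covering proof. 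A one-shot alternative would be to bound directly, via Kneser's theorem, how many integers of $[0,\ell+m]$ can fail to lie in $S+T$, but I would expect the same exceptional configurations to resurface there.
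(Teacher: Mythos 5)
This theorem is imported into the paper directly from Bardaji and Grynkiewicz (Corollary 1.2 of \cite{MR2684124}); the paper gives no proof of it, only uses it as a black box to deduce Theorem \ref{freimain}. There is therefore no in-paper argument to compare against, and your proposal must be judged on its own merits.

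Your preliminary reductions are sound: after translating to $\min S=\min T=0$, the inequality $k+p-1\le|S+T|\le k+2p-4$ does force $p\ge 3$, the side conditions do rule out $|S|=1$, the identity $\gcd_*(S+T)=\gcd\bigl(\gcd_*(S),\gcd_*(T)\bigr)$ is correct, and the base case $p=3$ (where $|S+T|=|S|+|T|-1$ is forced) is correctly handled by the classical structure theorem for pairs attaining the minimal sumset size. However, the induction step --- which is the entire content of the theorem --- is never carried out. You explicitly list the outstanding tasks yourself: (i) showing that some endpoint deletion of $T$ simultaneously drops $|S+T|$ by at least two and keeps every hypothesis intact, (ii) a direct argument in the rigid configurations where neither deletion loses two sumset elements, and (iii) threading the $\gcd_*(S)>1$ case against the alternative bound $|S+T|\le 2|S|+|T|-3$. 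None of these is resolved or even attempted, and they are precisely where the combinatorial substance of the theorem lives; it is not evident that a bare peel-an-endpoint scheme closes all of them, since the rigid configurations in (ii) typically require a separate structural analysis, which is the hard part of every $3k-4$-type argument. As it stands this is an honest, correctly oriented plan identifying the right obstacles, not a proof.
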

For convenience, we state the following corollary which is enough for our purposes.
\begin{theorem}
Let $S,T$ be non-empty subsets of $\mathbf{Z}$. Then one of the following conclusions holds:
\begin{itemize}
    \item [(1)] $|S+T| \geqslant |S|+|T|+\min(|S|,|T|)-3$.
    \item [(2)] $S+T$ contains an arithmetic progression with common difference
$\gcd_*(S+T)$ and length $|S|+|T|-1$.
\end{itemize}
\label{freimain}
\end{theorem}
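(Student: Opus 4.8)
The plan is to deduce Theorem \ref{freimain} from Theorem \ref{bardajigrynkiewicz} by a rescaling-and-case-analysis argument. The first move is to reduce to the case $\gcd_*(S+T)=1$. Indeed, if $d=\gcd_*(S+T)$, then after translating we may assume $\min S + \min T = 0$, and then $S-\min S$ and $T-\min T$ both lie in $d\cdot\mathbf{Z}$ (since every element of $(S+T)$ is a multiple of $d$ and hence so is every element of $S-\min S$ and of $T-\min T$); dividing through by $d$ replaces $S,T$ by sets $S',T'$ with $|S'|=|S|$, $|T'|=|T|$, $|S'+T'|=|S+T|$ and $\gcd_*(S'+T')=1$, and an arithmetic progression of common difference $1$ in $S'+T'$ pulls back to one of common difference $d$ in $S+T$. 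So from now on assume $\gcd_*(S+T)=1$, and by symmetry assume $\diam T\leqslant \diam S$.

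Now I would split on whether conclusion (1) fails. Suppose $|S+T| \leqslant |S|+|T|+\min(|S|,|T|)-4$; we must produce the long progression. Here it is cleanest to further split according to which of $|S|,|T|$ is smaller. If $|T|\leqslant|S|$, then $\min(|S|,|T|)=|T|$, so the hypothesis reads $|S+T|\leqslant |S|+2|T|-4$, which is exactly the first hypothesis of Theorem \ref{bardajigrynkiewicz}. For the secondary hypothesis of that theorem, note that the same inequality gives $|S+T|\leqslant |S|+2|T|-4 \leqslant 2|S|+|T|-4 < 2|S|+|T|-3$ (using $|T|\leqslant|S|$), so the alternative ``$|S+T|\leqslant 2|S|+|T|-3$'' holds regardless of $\gcd_*(S)$. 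Thus Theorem \ref{bardajigrynkiewicz} applies and yields a progression of common difference $1$ and length $|S|+|T|-1$ in $S+T$, which is conclusion (2). If instead $|S|<|T|$, then $\min(|S|,|T|)=|S|$ and I would simply apply the previous case with the roles of $S$ and $T$ interchanged — but one must be careful, because Theorem \ref{bardajigrynkiewicz} is not symmetric in $S$ and $T$ (it assumes $\diam T\leqslant\diam S$).

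**The main obstacle**, then, is precisely this asymmetry: when $|S|<|T|$ we would like to apply the theorem with $S$ and $T$ swapped, but we cannot guarantee $\diam S\leqslant\diam T$. To handle this I would argue as follows. If $\diam S \leqslant \diam T$ there is nothing to fix: apply the $|T|\leqslant|S|$ analysis to the pair $(T,S)$. If instead $\diam T\leqslant\diam S$ while $|S|<|T|$, then I claim the first hypothesis of Theorem \ref{bardajigrynkiewicz} still holds with the original ordering: from $|S+T|\leqslant |S|+2|S|-4 = 3|S|-4 \leqslant |S| + 2|T| - 4$ (since $|S|\leqslant|T|$) we get $|S+T|\leqslant|S|+2|T|-4$, and moreover $|S+T|\leqslant 3|S|-4\leqslant 2|S|+|T|-3$ using $|S|\leqslant|T|+1$, so again the secondary hypothesis is satisfied. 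Hence Theorem \ref{bardajigrynkiewicz} applies directly and gives the progression of length $|S|+|T|-1$. In all cases we have produced the desired progression, completing the proof. I expect the write-up to be short; the only real content is bookkeeping to match the hypotheses of Theorem \ref{bardajigrynkiewicz} in each of the few cases, and checking that $|S+T|\leqslant|S|+|T|+\min(|S|,|T|)-4$ — one stronger than the negation of (1) — is what feeds cleanly into the ``$|S|+2|T|-4$'' bound.
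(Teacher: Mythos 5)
Your approach is essentially the same as the paper's: rescale by $d=\gcd_*(S+T)$ to reduce to the coprime case, then invoke Theorem~\ref{bardajigrynkiewicz}. Two small remarks on your verification of the hypotheses. First, in the last subcase ($\diam T\leqslant\diam S$, $|S|<|T|$) you write $|S+T|\leqslant |S|+2|S|-4=3|S|-4$, but the failure of (1) with $\min(|S|,|T|)=|S|$ gives only $|S+T|\leqslant 2|S|+|T|-4$; this is a slip, though harmless since $2|S|+|T|-4\leqslant |S|+2|T|-4$ (as $|S|\leqslant |T|$) and trivially $2|S|+|T|-4\leqslant 2|S|+|T|-3$, so both hypotheses still hold. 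Second, the entire case split on $|S|$ versus $|T|$ is unnecessary: once you normalise $\diam T\leqslant\diam S$ by symmetry, the single inequality $|S+T|\leqslant |S|+|T|+\min(|S|,|T|)-4$ already yields $|S+T|\leqslant |S|+2|T|-4$ from $\min(|S|,|T|)\leqslant |T|$, and $|S+T|\leqslant 2|S|+|T|-3$ from $\min(|S|,|T|)\leqslant |S|$, so Theorem~\ref{bardajigrynkiewicz} applies directly with no further bookkeeping — this is the (terse) route the paper takes.
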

\begin{proof}
Let $S,T$ be non-empty subsets of $\mathbf{Z}$ and, after translating, we may assume that $\min S=\min T =0$. Suppose that $(1)$ does not hold so that $|S+T| \leqslant |S|+|T|+\min(|S|,|T|)-4$. Let $d=\gcd_*(S+T)$ and note that $d$ divides $\gcd_*(S)$ and $\gcd_*(T)$ so that each of the three sets $S,T$ and $S+T$ lies in an arithmetic progression with common difference $d$. As $\min S=\min T =0$, we see that $S$ and $T$ consist of multiples of $d$ only. Define $S'= \frac{1}{d}\cdot S$ and $T'=\frac{1}{d}\cdot T$ so $S'$ and $T'$ are non-empty subsets of $\mathbf{Z}$ with $\gcd_*(S'+T')=1$ and $|S'+T'|=|S+T|\leqslant|S'|+|T'|+\min(|S'|,|T'|)-4$. Theorem \ref{bardajigrynkiewicz} then implies that $S'+T'$ contains an arithmetic progression with common difference
$1$ and length $|S'|+|T'|-1$. As $S+T=d\cdot(S'+T')$, $(2)$ follows.
\end{proof}

\section{The Proof}
We are now ready to begin the proof of Theorems \ref{theo1} and \ref{theo2}. We will use induction on $n$ to prove the following theorem which simultaneously implies both Theorem \ref{theo1} and Theorem \ref{theo2}.
\begin{theorem}
There exist absolute constants $\delta>0$ and $C$ such that the following holds. Let $n\in \mathbf{N}$ and let $A\subset[n]$ be a set with property P. Then
\begin{equation}
    |A|\leqslant \max\left(\left\lceil\frac{n}{3}\right\rceil, \left(\frac{1}{3}-\delta\right)n+C\right).
\label{inductiontheo}
\end{equation}
\label{theo5}
\end{theorem}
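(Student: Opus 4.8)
The plan is to prove Theorem~\ref{theo5} by induction on $n$. For $n$ below any fixed threshold the bound holds by taking $C$ large, so assume $n$ is large; and since there is nothing to prove when $|A|\le(1/3-\delta)n+C$, assume $|A|>(1/3-\delta)n+C$ and aim to show $|A|\le\lceil n/3\rceil$. If $\gcd(A)\ge 2$ then rescaling by $\gcd(A)$ preserves property~P (since $z\mid x+y\iff cz\mid cx+cy$), and the inductive hypothesis applied in $[n/\gcd(A)]$ already gives $|A|\le\lceil n/6\rceil$, so we may assume $\gcd(A)=1$. Decompose $A=S\sqcup L$ with $S:=A_{(2/3,1]}$ and $L:=A_{[0,2/3]}$. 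Since $(2n/3,n]$ contains exactly $\lceil n/3\rceil$ integers, $|S|\le\lceil n/3\rceil$ --- this is the source of the claimed bound and of the tight example --- and the inductive hypothesis applied to the property~P set $L\subseteq[\lfloor 2n/3\rfloor]$ gives $|L|\le\lceil 2n/9\rceil$ (for $n$ large the first term in the maximum dominates). In particular $|S|=|A|-|L|>(1/9-\delta)n+C-1$, so $S$ fills a positive proportion of $(2n/3,n]$.

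The heart of the argument is to show that, unless $A$ is already small enough that $|A|\le(1/3-\delta)n+C$, the sumset $S+S$ fills almost all of the interval $(4n/3,2n]$, after which property~P eliminates $L$. The first half is a structure-versus-randomness step based on Theorem~\ref{freimain} applied to $S$: because $S+S\subseteq(4n/3,2n]$ we have $|S+S|\le\tfrac23 n+O(1)$, so conclusion~(1) would force $|S|\le\tfrac29 n+O(1)$, while conclusion~(2) supplies an arithmetic progression of length $2|S|-1$ and common difference $d:=\gcd_*(S)$ inside $S+S$. When $d\ge2$ the set $S$ is sparse ($|S|\le\frac{n}{3d}+O(1)$), a regime handled separately; the decisive case is $d=1$, where conclusion~(2) together with $|S|\gg n$ shows that $S+S$ contains, and hence (lying inside $(4n/3,2n]$) essentially coincides with, the whole interval $(4n/3,2n]$, missing only an exceptional set $E$ whose size one aims to control, ideally down to $|E|=O(1)$. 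For the second half: for every $z\in L$ the interval $(4n/3,2n]$, having length at least $z$, contains a multiple $mz$ with $m\ge3$, and since $z<2n/3<\min S$, property~P forbids $mz\in S+S$, so $mz\in E$. Since every integer has only $n^{o(1)}$ divisors --- and with a little care the multiples $mz$ can be chosen distinct for distinct $z$ --- this gives $|L|\le|E|\cdot n^{o(1)}$, so $L$ is small, and $|A|=|S|+|L|\le\lceil n/3\rceil+o(n)$. A final endgame upgrades this to the exact bound: if $|A|>\lceil n/3\rceil$ then $L\ne\emptyset$, yet $S$ is by then forced so close to the full interval $(2n/3,n]$ that the forbidden multiple of $\min A$ cannot in fact avoid $S+S$ --- a contradiction.

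The computational engine for the second half is Lemma~\ref{basicmultl}: one applies it with $B$ a suitable slice of $L$, with $k$ ranging over small integers $2,3,4,\dots$, and with the modulus $q$ chosen to match the common difference $d$ coming from the Freiman step, so that the forbidden multiples $k\cdot B$ sit in the correct residue class of the correct interval and are counted against $(A_{(\alpha,1]}+A_{(\alpha,1]})$. I expect the main obstacle to be the structural half --- proving that in the case $\gcd_*(S)=1$ the sumset $S+S$ really does fill $(4n/3,2n]$ up to a bounded error. This means upgrading the long arithmetic progression handed back by Theorem~\ref{freimain} to the assertion that $S$ is itself within $O(1)$ of an interval, ruling out $S$ being a union of two or three widely separated interval-like blocks, as well as treating the sparse branch $d\ge2$ and the exact-value endgame, where the gap between ``$|A|\le n/3+O(1)$'' and the sharp ``$|A|\le\lceil n/3\rceil$'' must be closed.
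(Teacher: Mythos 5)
Your skeleton (induction on $n$, reducing to $\gcd(A)=1$, splitting at $2n/3$, feeding the Freiman/Bardaji--Grynkiewicz dichotomy into the doubling of $S=A_{(2/3,1]}$, and then using Lemma~\ref{basicmultl} to count forbidden multiples) matches the paper's Case~1 almost exactly, and that case is handled correctly. But the heart of your argument---that $S+S$ ``fills almost all of $(4n/3,2n]$, missing only an exceptional set $E$ whose size one aims to control, ideally down to $|E|=O(1)$''---simply cannot be true in the regime you need it for. Your own chain of reductions only guarantees $|S|>(1/9-\delta)n+C-1$, and for $|S|$ anywhere below roughly $2n/9$ the Freiman conclusion hands you an arithmetic progression of length $2|S|-1<4n/9$ inside an interval of length $\approx 2n/3$; even when $\gcd_*(S)=1$ the sumset $S+S$ can miss $\Theta(n)$ integers of $(4n/3,2n]$. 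So $E$ is of size $\Theta(n)$ there, and the divisor-bound step ``$|L|\le|E|\cdot n^{o(1)}$'' gives nothing. (The divisor bound is also quantitatively the wrong tool: even an imagined $|E|=O(1)$ would only yield $|L|=n^{o(1)}$, which is $o(n)$ but not the $O(1)$ needed for the exact $\lceil n/3\rceil$ endgame, and in the low-density regime $|L|$ genuinely is $\Theta(n)$---for instance it can be close to $2n/9$---so no argument showing $L$ is small can be correct.)

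The paper's route through the low-density regime (its Cases~2 and~3, which carry nearly all of the work) is based on a mechanism absent from your proposal: constructing an \emph{injective} image $B_1=\{2^{j_a}a:a\in A\cap[2n/3]\}\subset(\tfrac{n}{3},\tfrac{2n}{3}]$ (and later variants using powers of $3$, or the more elaborate sets $B_2, B_3, Z$), so that $|B_1|=|A\cap[2n/3]|$ and every element of $B_1$ is a multiple of a member of $A$, hence $B_1$ is disjoint from anything like $\frac13\cdot\bigl((A_{(1/2,1]}+A_{(1/2,1]})\cap 3\mathbf{N}\bigr)$. This converts ``control $|L|$'' into ``lower-bound the number of multiples of $3$ in a sumset living inside a short interval,'' which Freiman/Bardaji--Grynkiewicz and Lemma~\ref{doublinglemma} can actually deliver, and which gives precise subtractive inequalities of the form $|B_1|+|A'''|\le\lceil n/3\rceil$ rather than loose divisor-counting. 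Without this injection (or some substitute that packs $A\cap[2n/3]$ into a window half the length), your second half has no engine, and the proposal as written does not close.
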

Throughout the paper, we assume that $C$ is a sufficiently large constant. First note that when $n\leqslant C$, the bound \eqref{inductiontheo} holds trivially. So from now on we may assume that $n>C$ is sufficiently large. Our induction hypothesis is that for all $m<n$, the upper bound $\max\left(\left\lceil\frac{m}{3}\right\rceil, \left(\frac{1}{3}-\delta\right)m+C\right)$ holds for subsets of $[m]$ having property P. Assume henceforth that $A\subset[n]$ has property P. To prove \eqref{inductiontheo}, we split the argument into three cases based on the size of the set $$A_{(\frac{2}{3},1]} \vcentcolon= A\cap \left(\frac{2n}{3},n\right].$$ These three cases of our proof will be handled in the three independent sections 5, 6 and 7. It is interesting to note that for large enough $n$, the only examples of sets with property P and size very close to $\left\lceil \frac{n}{3}\right\rceil$ seem to be sets containing almost all of $\left(\frac{2n}{3},n\right]$.\footnote{In fact, this is true and it can be proved with similar arguments to those used in the proof of Theorem \ref{theo5}.} One might therefore expect that the case where $\left|A_{(\frac{2}{3},1]}\right|$ is relatively large would cause the most trouble in the proof. This does not seem to be true however, and the first case that we consider, where $A_{(\frac{2}{3},1]}$ has density at least $\frac{2}{3}$ on $\left(\frac{2n}{3},n\right]$, has a far simpler proof than the remaining cases.

\section{Case 1:  $\left|A_{(\frac{2}{3},1]}\right|\geqslant \frac{2n}{9}+\frac{4}{3}$.}

Note that $A_{(\frac{2}{3},1]}+A_{(\frac{2}{3},1]}\subset \left(\frac{4n}{3},2n\right]$ so we get the trivial estimate $$\left|A_{(\frac{2}{3},1]}+A_{(\frac{2}{3},1]}\right|\leqslant \left\lceil \frac{2n}{3}\right\rceil.$$ As $3\left|A_{(\frac{2}{3},1]}\right|\geqslant \frac{2n}{3}+4$ by assumption, we get $3\left|A_{(\frac{2}{3},1]}\right|-4\geqslant \left\lceil \frac{2n}{3}\right\rceil\geqslant \left|A_{(\frac{2}{3},1]}+A_{(\frac{2}{3},1]}\right|$ so $(1)$ in Theorem \ref{freimain} does not hold when $S=T=A_{(\frac{2}{3},1]}$. Hence, Theorem \ref{freimain} implies that conclusion $(2)$ from the same theorem holds. We show that $\gcd_*\left(A_{(\frac{2}{3},1]}\right)=1$. Indeed, $A_{(\frac{2}{3},1]}$ is contained in a progression with common difference $\gcd_*\left(A_{(\frac{2}{3},1]}\right) = d$ so if $d\geqslant 2$, then $\left|A_{(\frac{2}{3},1]}\right|< \frac{n}{6}+1$ since $A_{(\frac{2}{3},1]}\subset\left(\frac{2n}{3},n\right]$, but this contradicts the assumption of Case 1. Hence, conclusion $(2)$ in Theorem \ref{freimain} gives that $A_{(\frac{2}{3},1]}+A_{(\frac{2}{3},1]}$ contains an interval $Q$ of length at least $2\left|A_{(\frac{2}{3},1]}\right|-1> \frac{4n}{9}+1$. Note that any $x\in \left[\frac{4n}{9}+1\right]$ has an integer multiple in every interval of length $x$ and hence also in $Q\subset A_{(\frac{2}{3},1]}+A_{(\frac{2}{3},1]}$ so that $x\notin A$ as $A$ has property P. Now let $s= \min A$ so that $s>\frac{4n}{9}+1$ by what we just showed. Without loss of generality, we may assume that $s\leqslant \left\lfloor \frac{2n}{3}\right\rfloor$ as $A\subset[s,n]$ so the desired bound \eqref{inductiontheo} holds trivially otherwise. $A-\{s\}$ cannot contain two numbers summing to $0\!\!\mod s$ as else we could find two numbers in $A$ larger than $s=\min A$ with $s$ dividing their sum, which would violate property P. Hence $\big|A\cap (s,2s]\big|\leqslant \left\lfloor\frac{s-1}{2}\right\rfloor$. If $\frac{n}{2}\leqslant s\leqslant \left\lfloor \frac{2n}{3}\right\rfloor,$ this gives in total
$$|A|\leqslant 1+\left\lfloor\frac{s-1}{2}\right\rfloor\leqslant 1+\left\lfloor\frac{\left\lfloor\frac{2n}{3}\right\rfloor-1}{2}\right\rfloor\leqslant\left\lceil \frac{n}{3}\right\rceil.$$ 
In the remaining case where $\frac{4n}{9}+1< s<\frac{n}{2}$, we trivially bound the number of elements of $A$ in $(2s,n]$ by $n-2s$. In total we get
$$|A|\leqslant 1+\frac{s-1}{2}+n-2s = n+\frac{1}{2}-\frac{3s}{2} \leqslant \frac{n}{3}-1,$$ as $s>\frac{4n}{9}+1$. So we have proved the desired bound \eqref{inductiontheo} in Case 1.

\section{Case 2: $\frac{n}{6}+24\leqslant \left|A_{(\frac{2}{3},1]}\right|< \frac{2n}{9}+\frac{4}{3}$.}

In Case 2, $A_{(\frac{2}{3},1]}$ has density roughly between $\frac{1}{2}$ and $\frac{2}{3}$ on the interval $\left(\frac{2n}{3},n\right]$. We begin with a useful lemma about sets which have density greater than half on an interval.

\begin{lemma}
Let $U\subset[k+1,k+m]$ be a set of integers, let $q$ be a positive integer and $a$ be a residue modulo $q$. If $|U|\geqslant \frac{m}{2}+\frac{q}{2}$, then the number of integers in the sumset $U+U$ that are $a(\!\!\!\!\mod q)$ is at least $\frac{2}{q}|U|-1$.
\label{doublinglemma}
\end{lemma}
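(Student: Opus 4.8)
The plan is to work residue class by residue class modulo $q$ and reduce the statement to finding one good pair of classes. For a residue $t$ modulo $q$, write $U_t \vcentcolon= U\cap(t+q\mathbf{Z})$ and $u_t\vcentcolon=|U_t|$, so $\sum_{t} u_t = |U|$. If $r,s$ are residues with $r+s\equiv a\pmod q$ and both $U_r,U_s$ are non-empty, then $U_r+U_s\subseteq U+U$ and every element of $U_r+U_s$ is $\equiv a \pmod q$, so $U_r+U_s$ is contained in the set of integers in $U+U$ that are $a\pmod q$. Rescaling the two arithmetic progressions by $1/q$ (writing $U_r = r + q\cdot V_r$, $U_s = s + q\cdot V_s$ with $V_r,V_s\subseteq\mathbf{Z}$) and using the elementary bound $|V_r+V_s|\geqslant |V_r|+|V_s|-1$ for non-empty finite sets of integers, we get $|U_r+U_s| = |V_r+V_s|\geqslant u_r+u_s-1$. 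Hence it suffices to exhibit residues $r,s$ with $r+s\equiv a\pmod q$, $u_r\geqslant 1$, $u_s\geqslant 1$, and $u_r+u_s\geqslant \tfrac{2}{q}|U|$.

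The key observation is that the density hypothesis forces \emph{every} individual residue class to be small. Since $U$ lies in an interval of $m$ integers, $u_t\leqslant\lceil m/q\rceil$, and $q\lceil m/q\rceil\leqslant m+q-1 < m+q \leqslant 2|U|$, so $u_t < \tfrac{2}{q}|U|$ for every residue $t$ modulo $q$.

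Now suppose, for contradiction, that no pair $(r,s)$ as above has $u_r+u_s\geqslant\tfrac{2}{q}|U|$. Then for \emph{every} residue $r$ modulo $q$ we have $u_r+u_{a-r}<\tfrac{2}{q}|U|$: this is the assumed bound when $u_r,u_{a-r}\geqslant 1$, and it follows from the previous paragraph when one of $u_r,u_{a-r}$ is zero. Summing over the $q$ residues $r$ and using that $r\mapsto a-r$ is a bijection on residue classes modulo $q$, the left-hand side is $\sum_r u_r + \sum_r u_{a-r} = 2|U|$, while the right-hand side is a sum of $q$ terms each strictly less than the positive quantity $\tfrac{2}{q}|U|$, hence strictly less than $q\cdot\tfrac{2}{q}|U| = 2|U|$. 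This contradiction produces the desired pair, and the reduction in the first paragraph then gives the lemma.

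The only real content is the "every residue class is small" step: it is what allows the averaging over pairs to go through even when some pairs $(r,a-r)$ involve an empty class, and it is precisely where the hypothesis $|U|\geqslant\tfrac{m}{2}+\tfrac{q}{2}$ is used (a strictly weaker density bound would not give the strict inequality needed in the final summation). Everything else — the residue decomposition, the sumset inequality $|V_r+V_s|\geqslant|V_r|+|V_s|-1$, and the pigeonhole on the $q$ pairs — is routine.
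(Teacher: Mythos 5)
Your proof is correct and follows essentially the same route as the paper: both arguments partition $U$ into residue classes modulo $q$, use the density hypothesis together with the interval bound to show each class has size strictly less than $\tfrac{2}{q}|U|$ (which is what guarantees the two classes in the chosen pair are both non-empty), find a pair summing to a residue $a$ with total size at least $\tfrac{2}{q}|U|$ by averaging/pigeonhole, and finish with the elementary sumset bound $|X+Y|\geqslant |X|+|Y|-1$. Your phrasing as a proof by contradiction versus the paper's direct selection of the maximizing pair is a cosmetic difference only.
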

\begin{proof}
Let $U_i = U\cap (i+q\!\cdot\!\mathbf{N})$ and pair the sets $U_i,U_{a-i}$ (some of the $U_i$ may be paired with themselves). Now look at the pair for which $|U_i|+|U_{a-i}|$ is maximal, say it is $U_j,U_{a-j}$. Then certainly $|U_j|+|U_{a-j}|\geqslant \frac{2}{q}|U|\geqslant \frac{m}{q}+1$ and in particular both $U_j,U_{a-j}$ are non-empty as we can trivially bound $|U_i|< \frac{m}{q}+1$ for all $i$. Using the well-known lower bound $|X+Y|\geqslant|X|+|Y|-1$ for the sumset of two non-empty sets of integers $X,Y$, we get that $|U_j+U_{a-j}|\geqslant |U_j|+|U_{a-j}|-1\geqslant \frac{2}{q}|U|-1$ as desired.
\end{proof}
Note also that the same result holds true when we consider subsets $U$ of an arithmetic progression with common difference $d>1$ as long as there is no obvious modular reason preventing it.
\begin{lemma}
Let $U\subset\{k+d,k+2d,\dots,k+md\}$, let $q$ be a positive integer coprime to $d$ and $a$ be a residue modulo $q$. If $|U|\geqslant \frac{m}{2}+\frac{q}{2}$, then the number of integers in the sumset $U+U$ that are $a(\!\!\!\!\mod q)$ is at least $\frac{2}{q}|U|-1$.
\label{doublinglemmad}
\end{lemma}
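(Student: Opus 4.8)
The plan is to deduce Lemma~\ref{doublinglemmad} from Lemma~\ref{doublinglemma} by an affine change of variables contracting the progression $\{k+d,k+2d,\dots,k+md\}$ onto $\{1,2,\dots,m\}$. First I would set $V\vcentcolon=\frac{1}{d}\cdot(U-\{k\})$, the image of $U$ under the injective affine map $x\mapsto (x-k)/d$; then $V\subset\{1,2,\dots,m\}$ and $|V|=|U|\geqslant\frac{m}{2}+\frac{q}{2}$, so $V$ satisfies the hypotheses of Lemma~\ref{doublinglemma} with parameters $k'=0$ and the same $m$.

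The second step is to match up residues. Since $U+U=2k+d\cdot(V+V)$, an element $2k+dv$ with $v\in V+V$ satisfies $2k+dv\equiv a\pmod q$ precisely when $dv\equiv a-2k\pmod q$; as $\gcd(d,q)=1$ the residue $d$ is invertible modulo $q$, so this happens exactly when $v\equiv b\pmod q$ for the fixed residue $b\vcentcolon= d^{-1}(a-2k)\bmod q$. Thus $v\mapsto 2k+dv$ is a bijection between $\{v\in V+V:\ v\equiv b\ (q)\}$ and $\{w\in U+U:\ w\equiv a\ (q)\}$, and it suffices to lower-bound the size of the former.

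For this I would apply Lemma~\ref{doublinglemma} to $V\subset[1,m]$ with residue $b$ modulo $q$: it gives that the number of integers in $V+V$ congruent to $b$ modulo $q$ is at least $\frac{2}{q}|V|-1=\frac{2}{q}|U|-1$. Transporting this through the bijection above yields the same lower bound for the number of integers in $U+U$ congruent to $a$ modulo $q$, which is exactly the assertion of the lemma.

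I do not anticipate a genuine obstacle here: the argument is pure bookkeeping, and the only substantive point is that $\gcd(d,q)=1$ is precisely what makes the linear part $v\mapsto dv$ of the substitution a bijection on $\mathbf{Z}/q\mathbf{Z}$, so that passing from residue $b$ back to residue $a$ costs nothing. Alternatively, one could rerun the proof of Lemma~\ref{doublinglemma} verbatim, using that coprimality of $d$ and $q$ forces each residue class modulo $q$ to meet the progression $\{k+d,\dots,k+md\}$ in a sub-progression of fewer than $\frac{m}{q}+1$ terms, and then applying $|X+Y|\geqslant|X|+|Y|-1$ to the heaviest matched pair of classes; the dilation argument merely packages this more cleanly.
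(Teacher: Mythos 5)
Your proof is correct and amounts to the same idea the paper uses: the paper simply notes that the proof of Lemma~\ref{doublinglemma} goes through once coprimality of $d$ and $q$ is used to obtain $|U_i|<\frac{m}{q}+1$ for each residue class $U_i=U\cap(i+q\cdot\mathbf{N})$, which is precisely the "rerun the proof verbatim" alternative you mention at the end. Your main argument --- contracting $U$ to $V=\frac{1}{d}\cdot(U-\{k\})\subset[1,m]$ and invoking Lemma~\ref{doublinglemma} with the transported residue $b=d^{-1}(a-2k)\bmod q$ via the bijection $v\mapsto 2k+dv$ --- is a cleaner formal packaging that treats Lemma~\ref{doublinglemma} as a black box, but it rests on exactly the same point: $\gcd(d,q)=1$ makes $v\mapsto dv$ a bijection on $\mathbf{Z}/q\mathbf{Z}$, which is the coprimality input the paper's one-line proof gestures at.
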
 
\begin{proof}
The proof is the same as that of Lemma \ref{doublinglemma}, except that we have to use the assumption that $d$ and $q$ are coprime to deduce the upper bound $|U_i|< \frac{m}{q}+1$ for all $i$, where $U_i=U\cap (i+q\!\cdot\!\mathbf{N})$.
\end{proof}

We return to the main analysis of Case 2. First we will construct from $A$ an auxiliary set $B_1$ as follows. For every number $a\in A$ with $a\leqslant \frac{2n}{3}$ there is a unique power of $2$, say $2^{j_a}$, so that $2^{j_a}a \in \left(\frac{n}{3},\frac{2n}{3}\right]$. Call $B_1$ the set of all numbers obtained in this way, so
\begin{equation}
B_1=\left\{2^{j_a}a:a\in A\cap\left[\frac{2n}{3}\right]\right\}
\label{dyadicembedding}
\end{equation} and note that $B_1$ is a subset of $\left(\frac{n}{3},\frac{2n}{3}\right]$. Observe that 
\begin{equation}
    |A| = |B_1|+\left|A_{(\frac{2}{3},1]}\right|,
    \label{Adecompo}
\end{equation} because $|B_1|=\left|A\cap\left[\frac{2n}{3}\right]\right|$ since coincidences of the form $2^{j_a}a = 2^{j_b}b$ with $a\neq b$ are impossible by conclusion (2) in Lemma \ref{basicInteger}. In other words, the map $a\mapsto 2^{j_a}a$ is an injection. Also observe that any number in $B_1$ is a multiple of a number in $A\cap\left[\frac{2n}{3}\right]$ so as $A$ has property P, we retain the property that $A_{(\frac{2}{3},1]}+A_{(\frac{2}{3},1]}$ contains no multiples of any element in $B_1$. Our basic proof strategy in Case 2 is to show that many numbers in $\left(\frac{n}{3},\frac{2n}{3}\right]$ do have a multiple in the sumset $A_{(\frac{2}{3},1]}+A_{(\frac{2}{3},1]}$, so as these numbers cannot lie in $B_1$, we get an upper bound on $|B_1|$ which hopefully is strong enough to let us conclude the desired bound on $|A|$ using \eqref{Adecompo}.
\bigskip

We cover $B_1\subset\left(\frac{n}{3},\frac{2n}{3}\right]$ with the following sets. On the left half of the interval $\left(\frac{n}{3},\frac{2n}{3}\right]$ we split up $B_1$ into residue classes modulo $3$, so let $$B_1^{\text{L},i(3)} = B_1\cap\left(\frac{n}{3},\frac{n}{2}\right]\cap (i+3\!\cdot\!\mathbf{N})$$ for $i=0,1,2$. On the right half of the interval $\left(\frac{n}{3},\frac{2n}{3}\right]$ we do the same but with residue classes modulo $4$, so let $$B_1^{\text{R},i(4)} = B_1\cap\left(\frac{n}{2},\frac{2n}{3}\right]\cap (i+4\!\cdot\!\mathbf{N})$$ for $i=0,1,2,3$. As we are in Case 2, we have that $\left|A_{(\frac{2}{3},1]}\right|\geqslant\frac{n}{6}+24$ so that $A_{(\frac{2}{3},1]}\subset \left(\frac{2n}{3},n\right]$ satisfies the assumption of Lemma \ref{doublinglemma} with modulus $q=12$. Applying Lemma \ref{doublinglemma} to the set $A_{(\frac{2}{3},1]}$, we conclude that for each $0\leqslant j<12$:
\begin{equation}
    \left|\left(A_{(\frac{2}{3},1]}+A_{(\frac{2}{3},1]}\right)\cap(j+12\!\cdot\!\mathbf{N})\right| \geqslant \frac{1}{6}\left|A_{(\frac{2}{3},1]}\right|-1,
    \label{q12bound}
\end{equation} and note that $A_{(\frac{2}{3},1]}+A_{(\frac{2}{3},1]}\subset \left(\frac{4n}{3},2n\right]$.
Further, for each $i$ observe that $4\!\cdot\!B_1^{\text{L},i(3)} \subset \left(\frac{4n}{3},2n\right]\cap(4i+12\!\cdot\!\mathbf{N})$ and that $3\!\cdot\!B_1^{\text{R},i(4)} \subset \left(\frac{4n}{3},2n\right]\cap(3i+12\!\cdot\!\mathbf{N})$. By definition \eqref{dyadicembedding} of $B_1$, the sets $3\cdot B_1$ and $4\cdot B_1$ consist of multiples of numbers in $A\cap\left[\frac{2n}{3}\right]$ so we can apply Lemma \ref{basicmultl} with $\alpha=\frac{2}{3}$, $q=12$, $I=\left(\frac{4n}{3},2n\right]$ and $B=B_1^{\text{L},i(3)},B_1^{\text{R},i(4)}$ for each $i$. Plugging in the lower bound \eqref{q12bound} in the inequality \eqref{basicmult} from Lemma \ref{basicmultl} gives
\begin{align*}
    \frac{1}{6}\left|A_{(\frac{2}{3},1]}\right|-1+\left|B_1^{\text{L},i(3)}\right|&\leqslant \frac{n}{18}+1, \\
    \frac{1}{6}\left|A_{(\frac{2}{3},1]}\right|-1+\left|B_1^{\text{R},i(4)}\right|&\leqslant \frac{n}{18}+1. 
\end{align*}
Hence we conclude
\begin{align}
    \left|A_{(\frac{2}{3},1]}\right|+6\left|B_1^{\text{L},i(3)}\right|&\leqslant \frac{n}{3}+12, \label{go1}\\
    \left|A_{(\frac{2}{3},1]}\right|+6\left|B_1^{\text{R},i(4)}\right|&\leqslant \frac{n}{3}+12.\label{go2}
\end{align}

Having obtained the two inequalities above, we may assume for the remainder of the argument in Case 2 that $\left|B_1^{\text{L},i(3)}\right|<\frac{|B_1|}{6}+2$ and $\left|B_1^{\text{R},i(4)}\right|<\frac{|B_1|}{6}+2$ for all $i$ as otherwise we could plug in \eqref{go1} or \eqref{go2} in \eqref{Adecompo} to conclude that $|A|=\left|A_{(\frac{2}{3},1]}\right|+|B_1|\leqslant \frac{n}{3}$. We will use these extra assumptions in the final part of the argument in Case 2.

\bigskip

The main idea behind our proof  in Case 2 is to apply Theorem \ref{freimain} in a suitable way to $A_{(\frac{2}{3},1]}$. It is tempting to try applying Theorem \ref{freimain} to the set $A_{(\frac{2}{3},1]}$ directly. This does not seem to be enough however, and we first split $A_{(\frac{2}{3},1]}$ into the sets $E$ and $O$ consisting of the even/odd numbers in $A_{(\frac{2}{3},1]}$. The idea is then to apply Theorem \ref{freimain} to whichever of the two sets $E$ or $O$ contains most of $A_{(\frac{2}{3},1]}$. We shall continue under the assumption that $|O|\geqslant \frac{\left|A_{(\frac{2}{3},1]}\right|}{2}$, but the same proof works when $|E|\geqslant \frac{\left|A_{(\frac{2}{3},1]}\right|}{2}$ (after interchanging the roles of $E$ and $O$ in what follows). Note that $\gcd_*(O)$ is even, and if it is at least 4 then we would get $|O|< \frac{n}{12}+1$ since $O\subset \left(\frac{2n}{3},n\right]$. Because we are assuming in Case 2 that $\left|A_{(\frac{2}{3},1]}\right|\geqslant \frac{n}{6}+24$, we must have that $|O|\geqslant \frac{n}{12}+12$ so that $\gcd_*(O) = 2$. We apply Theorem \ref{freimain} to the sumset $O+O$ to deduce that either this sumset has size at least $3|O|-3$, or else that it contains a long arithmetic progression, and we prove the desired bound on $|A|$ in both cases. 

\bigskip

Assume first that $|O+O|\leqslant 3|O|-4$, then conclusion $(2)$ in Theorem \ref{freimain} holds so that $O+O$ contains an arithmetic progression $Q$ with common difference $\gcd_*(O)=2$ and size $2|O|-1 \geqslant \left|A_{(\frac{2}{3},1]}\right|-1$. Also note that $O+O\subset \left(\frac{4n}{3},2n\right]$ is fully contained within the even integers. Hence we can find an even integer $t$ so that \begin{equation}
    Q = \left\{t+4,t+6, \dots,t+2\left|A_{(\frac{2}{3},1]}\right|\right\}\subset \left(\frac{4n}{3},2n\right].
    \label{Qdefi}
\end{equation} So $Q$ contains at least $\frac{|Q|-1}{2}$ multiples of 4, and at least $\frac{|Q|-2}{3}$ multiples of 6. Now, we divide all the multiples of 4 in $Q$ by 4 and note that the set of the resulting quotients is contained in $\frac{1}{4}\!\cdot\!Q\subset\left(\frac{n}{3},\frac{n}{2}\right]$ by \eqref{Qdefi}. Similarly we divide all the multiples of 3 in $Q$ by 3 and in this case the resulting quotients lie in $\frac{1}{3}\!\cdot\!Q\subset \left(\frac{4n}{9},\frac{2n}{3}\right]$. Let $A'$ be the set of all the quotients obtained in this way from $Q$, so \begin{equation}
    A' \vcentcolon= \left(\frac{1}{3}\!\cdot\!Q\cup\frac{1}{4}\!\cdot\!Q\right)\cap\mathbf{N}.
    \label{A'defin}
\end{equation} Note that $A'$ is a subset of $\left(\frac{n}{3}, \frac{2n}{3}\right]$ and that each element of $A'$ has an integer multiple in $Q\subset A_{(\frac{2}{3},1]}+A_{(\frac{2}{3},1]}$. We now show that $\frac{1}{3}\!\cdot\!Q$ and $\frac{1}{4}\!\cdot\!Q$ are disjoint. Suppose for a contradiction that $\frac{1}{3}\!\cdot\!Q$ and $\frac{1}{4}\!\cdot\!Q$ intersect, then it would have to be the case that $\max\frac{1}{4}\!\cdot\!Q \geqslant\min\frac{1}{3}\!\cdot\!Q$ so that plugging in the values of $\max Q$ and $\min Q$ from \eqref{Qdefi} would give $$\frac{t}{4}+\frac{\left|A_{(\frac{2}{3},1]}\right|}{2}\geqslant\max\frac{1}{4}\!\cdot\!Q \geqslant\min\frac{1}{3}\!\cdot\!Q\geqslant \frac{t}{3}+\frac{4}{3}$$ whence $\left|A_{(\frac{2}{3},1]}\right|\geqslant \frac{t}{6} +\frac{8}{3} > \frac{2n}{9}+2$ since $t> \frac{4n}{3}-4$ by \eqref{Qdefi}. This however contradicts our Case 2 assumption $\left|A_{(\frac{2}{3},1]}\right|\leqslant \frac{2n}{9}+\frac{4}{3}$. Hence we deduce \begin{equation}
  \left|A'\right|=\left|\left(\frac{1}{4}\!\cdot\!Q\right)\cap\mathbf{N}\right|+\left|\left(\frac{1}{3}\!\cdot\!Q\right)\cap\mathbf{N}\right|\geqslant \frac{|Q|-1}{2}+\frac{|Q|-2}{3}\geqslant \frac{5\left|A_{(\frac{2}{3},1]}\right|}{6}-2,
 \label{A'sizebound1}
\end{equation} because $|Q|\geqslant \left|A_{(\frac{2}{3},1]}\right|-1$ by the definition \eqref{Qdefi} of $Q$. Next, as $\left|A_{(\frac{2}{3},1]}\right|\geqslant\frac{n}{6}+24$ by the assumptions of Case 2, Lemma \ref{doublinglemma} gives that 
\begin{equation}
    \left|\left(A_{(\frac{2}{3},1]}+A_{(\frac{2}{3},1]}\right)\cap(3+6\!\cdot\!\mathbf{N})\right|\geqslant \frac{\left|A_{(\frac{2}{3},1]}\right|}{3}-1.
\label{3Modulo6many}
\end{equation}
So we can find many numbers in $A_{(\frac{2}{3},1]}+A_{(\frac{2}{3},1]}$ that are $3\!\!\mod \!6$ and consider the set $\frac{1}{3}\!\cdot\!\bigg(\left(A_{(\frac{2}{3},1]}+A_{(\frac{2}{3},1]}\right)\cap(3+6\!\cdot\!\mathbf{N})\bigg)$ of quotients obtained by dividing these numbers by 3. Clearly, this set is contained in $\frac{1}{3}\!\cdot\!\left(A_{(\frac{2}{3},1]}+A_{(\frac{2}{3},1]}\right)\subset\frac{1}{3}\!\cdot\!\left(\frac{4n}{3},2n\right]=\left(\frac{4n}{9},\frac{2n}{3}\right]$. Moreover, this set of quotients consists of odd numbers only so it is disjoint from $\left(\frac{1}{3}\!\cdot\!Q\right)\cap\mathbf{N}$ because $Q$ is a subset of $O+O$ and therefore contains only even numbers. Since this set of quotients $\frac{1}{3}\!\cdot\!\bigg(\left(A_{(\frac{2}{3},1]}+A_{(\frac{2}{3},1]}\right)\cap(3+6\cdot\mathbf{N})\bigg)$ is contained in $\left(\frac{4n}{9},\frac{2n}{3}\right]$, its intersection with $\left(\frac{1}{4}\!\cdot\!Q\right)\cap\mathbf{N}$ trivially has size at most $\left|\left(\frac{4n}{9},\frac{n}{2}\right]\cap(1+2\!\cdot\!\mathbf{N})\right| < \frac{n}{36}+1$ because $\frac{1}{4}\!\cdot\!Q\subset \left(\frac{n}{3},\frac{n}{2}\right]$ by \eqref{Qdefi}. We now add this set of quotients to $A'$ to obtain a larger set $A''$ defined by \begin{align*}
    A'' \vcentcolon&= A'\cup \frac{1}{3}\!\cdot\!\bigg(\left(A_{(\frac{2}{3},1]}+A_{(\frac{2}{3},1]}\right)\cap(3+6\!\cdot\!\mathbf{N})\bigg)\\
    &=\left(\left(\frac{1}{3}\!\cdot\!Q\cup\frac{1}{4}\!\cdot\!Q\right)\cap\mathbf{N}\right)\cup\frac{1}{3}\!\cdot\!\bigg(\left(A_{(\frac{2}{3},1]}+A_{(\frac{2}{3},1]}\right)\cap(3+6\!\cdot\!\mathbf{N})\bigg).
\end{align*} By \eqref{3Modulo6many}, we see that we have added at least $$\left|A''\setminus{A'}\right|>\left|\left(A_{(\frac{2}{3},1]}+A_{(\frac{2}{3},1]}\right)\cap(3+6\!\cdot\!\mathbf{N})\right|- \frac{n}{36}-1\geqslant\frac{\left|A_{(\frac{2}{3},1]}\right|}{3}-\frac{n}{36}-2$$ new elements to $A'$ to obtain $A''$. Combining this with the lower bound \eqref{A'sizebound1} gives\begin{equation}
    \left|A''\right| > \frac{5\left|A_{(\frac{2}{3},1]}\right|}{6}-2+\frac{\left|A_{(\frac{2}{3},1]}\right|}{3}-\frac{n}{36}-2= \frac{7\left|A_{(\frac{2}{3},1]}\right|}{6}-\frac{n}{36}-4.
    \label{Aprimebound}
\end{equation} 
As we noted right after the definition \eqref{A'defin} of $A'$, $A'$ is a subset of $\left(\frac{n}{3},\frac{2n}{3}\right]$ and each of its members has a multiple in $A_{(\frac{2}{3},1]}+A_{(\frac{2}{3},1]}$. The same is true for $A''$ as this set is obtained from $A'$ by adding the set $\frac{1}{3}\!\cdot\!\Bigg(\left(A_{(\frac{2}{3},1]}+A_{(\frac{2}{3},1]}\right)\cap(3+6\!\cdot\!\mathbf{N})\Bigg)\subset \left(\frac{4n}{9},\frac{2n}{3}\right]$ and all of its elements also have a multiple in $A_{(\frac{2}{3},1]}+A_{(\frac{2}{3},1]}$. As $A$ has property P and $B_1$ consists of multiples of numbers in $A\cap\left[\frac{2n}{3}\right]$ 
by definition \eqref{dyadicembedding}, $A''$ must be disjoint from $B_1$ and since both $B_1$ and $A''$ are subsets of $\left(\frac{n}{3},\frac{2n}{3}\right]$ we deduce that $\left\lceil \frac{n}{3}\right\rceil\geqslant \left|A''\right|+|B_1|$. Using the lower bound \eqref{Aprimebound} for $\left|A''\right|$ in this inequality then yields the desired bound for $|A|$ as follows
\begin{align*}
    \left\lceil \frac{n}{3}\right\rceil&\geqslant \left|A''\right|+|B_1| \\
    &>\frac{7\left|A_{(\frac{2}{3},1]}\right|}{6}-\frac{n}{36}-4+|B_1| \\
    &\geqslant  \left|A_{(\frac{2}{3},1]}\right|+|B_1| \\
    &= |A|,
\end{align*}
where for the third inequality we used that $\frac{\left|A_{(\frac{2}{3},1]}\right|}{6}\geqslant \frac{n}{36}+4$ because we assume that $\left|A_{(\frac{2}{3},1]}\right|\geqslant\frac{n}{6}+24$ in Case 2, and for the final equality we used \eqref{Adecompo}.

\bigskip

This leaves us with alternative $(1)$ in Theorem \ref{freimain}, and hence we can now assume that \begin{equation}
    |O+O|\geqslant 3|O|-3\geqslant \frac{3\left|A_{(\frac{2}{3},1]}\right|}{2}-3.
    \label{O_1expansionsum}
\end{equation} Note that the sumset $O+O$ is fully contained in the set of even numbers in $\left(\frac{4n}{3},2n\right]$. Recall that by the discussion following inequalities \eqref{go1} and \eqref{go2} we may assume that $\left|B_1^{\text{R},i(4)}\right|<\frac{|B_1|}{6}+2$ for all $i$. Hence we get that 
\begin{align}
\left|B_1^{\text{L},0(3)}\right|+\left|B_1^{\text{L},1(3)}\right|+\left|B_1^{\text{L},2(3)}\right|+ \left|B_1^{\text{R},2(4)}\right|&= |B_1|-\left|B_1^{\text{R},0(4)}\right|-\left|B_1^{\text{R},1(4)}\right|-\left|B_1^{\text{R},3(4)}\right|\nonumber\\
&>\frac{|B_1|}{2}-6.
    \label{B_1LRbound}
\end{align} Furthermore, the dilated sets $4\!\cdot\!B_1^{\text{L},0(3)},4\!\cdot\!B_1^{\text{L},1(3)},4\!\cdot\!B_1^{\text{L},2(3)}$ and $3\!\cdot\!B_1^{\text{R},2(4)}$ are all sets of even numbers contained in $\left(\frac{4n}{3},2n\right]$ and they are pairwise disjoint since they all lie in distinct residue classes modulo 12 (to be precise they lie in the classes $0, 4, 8$ and $6\!\mod 12$ respectively). These dilated sets also consist only of multiples of numbers in $B_1$ so they are all disjoint from $O+O\subset A_{(\frac{2}{3},1]}+A_{(\frac{2}{3},1]}$ as $A$ has property P. We conclude that the sets $4\!\cdot\!B_1^{\text{L},0(3)},4\!\cdot\!B_1^{\text{L},1(3)},4\!\cdot\!B_1^{\text{L},2(3)},3\!\cdot\!B_1^{\text{R},2(4)}$ and $O+O$ are pairwise disjoint sets of even numbers in $\left(\frac{4n}{3},2n\right]$. As there are less than $\frac{n}{3}+1$ even numbers in $\left(\frac{4n}{3},2n\right]$, we get
\begin{align*}
    \frac{n}{3}+1&> |O+O|+\left|4\!\cdot\!B_1^{\text{L},0(3)}\right|+\left|4\!\cdot\!B_1^{\text{L},1(3)}\right|+\left|4\!\cdot\!B_1^{\text{L},2(3)}\right|+\left|3\!\cdot\!B_1^{\text{R},2(4)}\right|\\
    &>\frac{3\left|A_{(\frac{2}{3},1]}\right|}{2}-3+\frac{|B_1|}{2}-6,
\end{align*}
using the lower bounds \eqref{O_1expansionsum} and \eqref{B_1LRbound}. From rearranging this inequality we obtain the bound $|B_1|< \frac{2n}{3}-3\left|A_{(\frac{2}{3},1]}\right|+20$, so after using that $\left|A_{(\frac{2}{3},1]}\right|+|B_1|= |A|$ by \eqref{Adecompo}, we get in total $$|A|=\left|A_{(\frac{2}{3},1]}\right|+|B_1|< \frac{2n}{3}-2\left|A_{(\frac{2}{3},1]}\right|+20< \frac{n}{3},$$ where in the final inequality we used the assumption that $\left|A_{(\frac{2}{3},1]}\right|\geqslant \frac{n}{6}+24$ in Case 2. This finishes the proof of Case 2.
\vfill
\pagebreak

\section{Case 3: $\left|A_{(\frac{2}{3},1]}\right|< \frac{n}{6}+24$.}

In the final case of the argument, we assume that $A_{(\frac{2}{3},1]}=A\cap\left(\frac{2n}{3},n\right]$ has density at most $\frac{1}{2}$ on $\left(\frac{2n}{3},n\right]$. In Case 2 we noted that no number in the auxiliary set $B_1$ that we defined in \eqref{dyadicembedding} can have a multiple in $A_{(\frac{2}{3},1]}+A_{(\frac{2}{3},1]}$ because $A$ has property P. As $A_{(\frac{2}{3},1]}$ was quite large in Case 2, this gave a strong enough upper bound on $|B_1|$ so that we could conclude by using that $|A|=\left|A_{(\frac{2}{3},1]}\right|+|B_1|$ by \eqref{Adecompo}. If only a relatively small fraction of $A$ lies in $A_{(\frac{2}{3},1]}$ however, as we are assuming in Case 3, it is crucial for the argument that we make use of sums in $A+A$ involving elements smaller than $\frac{2n}{3}$. Hence, in this section we will frequently make use of the set $A\cap \left(\frac{n}{2},n\right]$ and recall that we write 
$A_{(\frac{1}{2},1]} \vcentcolon= A\cap \left(\frac{n}{2},n\right]$. We introduce some notation for the following important sets
$$A_{(\frac{1}{2},1]}^{i(3)} \vcentcolon= A_{(\frac{1}{2},1]}\cap (i + 3\!\cdot\!\mathbf{N})=A\cap\left(\frac{n}{2},n\right]\cap (i + 3\!\cdot\!\mathbf{N})$$ for $i=0,1,2$. So the sets $A_{(\frac{1}{2},1]}^{0(3)},A_{(\frac{1}{2},1]}^{1(3)},A_{(\frac{1}{2},1]}^{2(3)}$ are a partition of $A_{(\frac{1}{2},1]}$ corresponding to residue classes modulo $3$. Observe that $A_{(\frac{1}{2},1]} = A_{(\frac{1}{2},\frac{2}{3}]}\cup A_{(\frac{2}{3},1]}$ and in Case 3 we will study the whole sumset $A_{(\frac{1}{2},1]}+A_{(\frac{1}{2},1]}$ instead of just the sumset $A_{(\frac{2}{3},1]}+A_{(\frac{2}{3},1]}$ that was sufficient for Cases 1 and 2. 
\bigskip

Before we start with the argument, observe that by induction on $n$ we may assume that for any $a\geqslant1$:
\begin{equation}
\big|A\cap\{n-a+1,\dots,n\}\big|> \left(\frac{1}{3}-\delta\right)a
\label{inductionbound}
\end{equation}
Indeed, the set $A\cap\left[n-a\right]$ is a subset of $A$ and therefore also has property P. So by the induction hypothesis, $\big|A\cap\left[n-a\right]\big|\leqslant \max\left(\left\lceil\frac{n-a}{3}\right\rceil,\left(\frac{1}{3}-\delta\right)(n-a)+C\right)$ and if \eqref{inductionbound} failed to hold, then we would get $|A|\leqslant\max\left(\left\lceil\frac{n-a}{3}\right\rceil,\left(\frac{1}{3}-\delta\right)(n-a)+C\right)+\left(\frac{1}{3}-\delta\right)a$ which implies the desired bound \eqref{inductiontheo}. Further, for any integers $k>1$ and $l\geqslant1$, the induction hypothesis gives the bound
\begin{equation}
    \left|A\cap(k\!\cdot\!\mathbf{N})\cap\left[\frac{n}{l}\right]\right|\leqslant \frac{n}{3kl}+C.
\label{inductionboundmultiples}
\end{equation}
This follows from the observation that as $A$ has property P, so does the set $A\cap(k\!\cdot\!\mathbf{N})\cap\left[\frac{n}{l}\right]$, and hence so does $\frac{1}{k}\!\cdot\!\big(A\cap(k\!\cdot\!\mathbf{N})\cap\left[\frac{n}{l}\right]\big)\subset \left[\frac{n}{kl}\right]$.

\bigskip

To start the main argument of Case 3, recall the construction \eqref{dyadicembedding} of the auxiliary set $B_1$ that we obtained by mapping $A\cap\left[\frac{2n}{3}\right]$ injectively into $\left(\frac{n}{3},\frac{2n}{3}\right]$ via powers of 2. Let us here consider the following set of quotients 
\begin{equation}
   A'''\vcentcolon= \frac{1}{3}\!\cdot\!\bigg( \left(A_{(\frac{1}{2},1]}+A_{(\frac{1}{2},1]}\right)\cap3\!\cdot\!\mathbf{N}\bigg),
\label{A''definition}
\end{equation} obtained by dividing all multiples of 3 in $A_{(\frac{1}{2},1]}+A_{(\frac{1}{2},1]}$ by 3, and note that it is also contained in $\left(\frac{n}{3},\frac{2n}{3}\right]$ since $A_{(\frac{1}{2},1]}+A_{(\frac{1}{2},1]}\subset(n,2n]$. We show that $A'''$ is disjoint from $B_1$. Suppose for a contradiction that some $b_1\in B_1$ coincides with some $a'''\in A'''$. Then $3b_1 = 3a'''\in A_{(\frac{1}{2},1]}+A_{(\frac{1}{2},1]}$ so $3b_1 = x+y$ for some $x,y\in A_{(\frac{1}{2},1]}$. Recall that by the construction \eqref{dyadicembedding} of $B_1$, $b_1\in B_1$ is of the form $2^{j_a}a$ for some $a\in A$ so we get that $3\cdot2^{j_a}a=x+y$. As $A$ has property P, we must therefore have that one of $x,y$, say $y$, satisfies $y\leqslant a\leqslant b_1$. But then we get $x = 3b_1-y\geqslant 2b_1\geqslant 2y> n$ since $y\in A_{(\frac{1}{2},1]}\subset\left(\frac{n}{2},n\right]$, a contradiction. Hence we conclude that $A'''$ and $B_1$ are disjoint subsets of the interval $\left(\frac{n}{3},\frac{2n}{3}\right]$ so that
\begin{equation}
    |B_1|+\left|A'''\right|\leqslant \left\lceil \frac{n}{3}\right\rceil.
    \label{dyadicembed}
\end{equation}
We make the following observation which will be important for a later argument. If neither of $A_{(\frac{1}{2},1]}^{1(3)},A_{(\frac{1}{2},1]}^{2(3)}$ is empty (we will prove this later in Lemma \ref{nonemptylemma}), then we may assume that
\begin{equation}
    \left|A_{(\frac{1}{2},\frac{2}{3}]}\right|\leqslant \frac{\left|A_{(\frac{2}{3},1]}\right|}{2}+1.
\label{A_2lowe}
\end{equation} Indeed, otherwise we would have that $\left|A_{(\frac{1}{2},\frac{2}{3}]}\right|\geqslant \frac{\left|A_{(\frac{2}{3},1]}\right|}{2}+\frac{3}{2}$, so $\left|A_{(\frac{1}{2},1]}\right|=\left|A_{(\frac{1}{2},\frac{2}{3}]}\right|+\left|A_{(\frac{2}{3},1]}\right|\geqslant \frac{3\left|A_{(\frac{2}{3},1]}\right|}{2}+\frac{3}{2}$. As neither of $A_{(\frac{1}{2},1]}^{1(3)},A_{(\frac{1}{2},1]}^{2(3)}$ is empty, the set of multiples of 3 in $A_{(\frac{1}{2},1]}+A_{(\frac{1}{2},1]}$ has size at least $\max\left(2\left|A_{(\frac{1}{2},1]}^{0(3)}\right|-1,\left|A_{(\frac{1}{2},1]}^{1(3)}\right|+\left|A_{(\frac{1}{2},1]}^{2(3)}\right|-1\right)\geqslant\frac{2\left|A_{(\frac{1}{2},1]}\right|}{3}-1$. By definition \eqref{A''definition}, we then get $\left|A'''\right|=\left|\left(A_{(\frac{1}{2},1]}+A_{(\frac{1}{2},1]}\right)\cap(3\!\cdot\mathbf{N})\right|\geqslant \frac{2\left|A_{(\frac{1}{2},1]}\right|}{3}-1\geqslant\left|A_{(\frac{2}{3},1]}\right|$. Plugging in this lower bound in \eqref{dyadicembed} would then give the desired result
\begin{align*}
    \left\lceil \frac{n}{3}\right\rceil &\geqslant |B_1|+\left|A'''\right| \\
    &\geqslant |B_1|+\left|A_{(\frac{2}{3},1]}\right| \\
    &= |A|,
\end{align*}
because $|A|=|B_1|+\left|A_{(\frac{2}{3},1]}\right|$ by \eqref{Adecompo}.

\bigskip

Having indicated how \eqref{dyadicembed} can be useful by proving \eqref{A_2lowe}, we now return to the main argument. To successfully make use of \eqref{dyadicembed} in general to obtain a good bound on $|B_1|$, we  want $A'''$ to be large, i.e. we want to be able to find many multiples of $3$ in $A_{(\frac{1}{2},1]}+A_{(\frac{1}{2},1]}$.\footnote{This approach is too optimistic in general as we may not be able to find enough multiples of $3$ in $A_{(\frac{1}{2},1]}+A_{(\frac{1}{2},1]}$ to obtain a strong enough bound on $|B_1|$ using \eqref{dyadicembed}. However, in this case we will obtain enough structural information about $A_{(\frac{1}{2},1]}$ to proceed by a different argument.} For this reason it is natural to look at the sumset $A_{(\frac{1}{2},1]}^{0(3)}+A_{(\frac{1}{2},1]}^{0(3)}$ or $A_{(\frac{1}{2},1]}^{1(3)}+A_{(\frac{1}{2},1]}^{2(3)}$ depending on whether $A_{(\frac{1}{2},1]}$ proportionally has more elements in $A_{(\frac{1}{2},1]}^{0(3)}$, or in $A_{(\frac{1}{2},1]}^{1(3)}\cup A_{(\frac{1}{2},1]}^{2(3)}$. Hence, we further split up the argument in Case 3 into two subcases depending on which of the two inequalities $\left|A_{(\frac{1}{2},1]}^{1(3)}\right|+\left|A_{(\frac{1}{2},1]}^{2(3)}\right|\geqslant \frac{2\left|A_{(\frac{1}{2},1]}\right|}{3},$ or $\left|A_{(\frac{1}{2},1]}^{0(3)}\right|\geqslant \frac{\left|A_{(\frac{1}{2},1]}\right|}{3}$ holds. The main argument in both of these subcases is the same and based on applying Theorem \ref{freimain} to $A_{(\frac{1}{2},1]}^{1(3)}+A_{(\frac{1}{2},1]}^{2(3)}$ in the first subcase, and to $A_{(\frac{1}{2},1]}^{0(3)}+A_{(\frac{1}{2},1]}^{0(3)}$ in the second.
Before starting with these subcases, we state a lemma that will be useful for both.
\begin{lemma}
In Case 3 we either have that the desired bound \eqref{inductiontheo} holds, or else that $A_{(\frac{1}{2},\frac{2}{3}]}$ has size at least
\begin{equation}
    \left|A_{(\frac{1}{2},\frac{2}{3}]}\right|\geqslant \frac{n}{48}-17.
\label{A_2genelowe}
\end{equation}
\label{lemma4}
\end{lemma}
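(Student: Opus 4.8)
We argue by contraposition: assuming that \eqref{inductiontheo} fails, so that in particular $|A|>\left\lceil\frac{n}{3}\right\rceil$, we will deduce $\left|A_{(\frac{1}{2},\frac{2}{3}]}\right|\geqslant\frac{n}{48}-17$. First dispose of the easy case: if $\left|A_{(\frac{1}{2},1]}\right|\geqslant\frac{n}{4}$ then, since $\left|A_{(\frac{2}{3},1]}\right|<\frac{n}{6}+24$ in Case 3, we already get $\left|A_{(\frac{1}{2},\frac{2}{3}]}\right|=\left|A_{(\frac{1}{2},1]}\right|-\left|A_{(\frac{2}{3},1]}\right|>\frac{n}{12}-24\geqslant\frac{n}{48}-17$ for $n$ large, and we are done. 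So from now on assume $\left|A_{(\frac{1}{2},1]}\right|<\frac{n}{4}$.

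The plan is to bound $|A|$ via a second dyadic embedding, this time of $A\cap\left[\frac{n}{2}\right]$ into $\left(\frac{n}{6},\frac{n}{3}\right]$. For each $a\in A\cap\left[\frac{n}{2}\right]$ let $2^{k_a}$ be the unique power of $2$ with $2^{k_a}a\in\left(\frac{n}{6},\frac{n}{3}\right]$, and put $B_0=\left\{2^{k_a}a:a\in A\cap\left[\frac{n}{2}\right]\right\}\subset\left(\frac{n}{6},\frac{n}{3}\right]$. By Lemma \ref{basicInteger} this map is injective (exactly as in \eqref{Adecompo}), so $|B_0|=\left|A\cap\left[\frac{n}{2}\right]\right|$ and hence $|A|=|B_0|+\left|A_{(\frac{1}{2},1]}\right|$. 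The key point is that for every $b\in B_0$ and every integer $k\geqslant1$ we have $kb\notin A_{(\frac{1}{2},1]}+A_{(\frac{1}{2},1]}$: writing $b=2^{k_a}a$, a relation $kb=x+y$ with $x,y\in A_{(\frac{1}{2},1]}$ would exhibit $a\leqslant b<\frac{n}{2}$ dividing the sum of two members $x,y$ of $A$ each exceeding $\frac{n}{2}\geqslant a$, contradicting property P.

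Now apply Theorem \ref{freimain} with $S=T=A_{(\frac{1}{2},1]}$, and suppose first that conclusion $(2)$ holds with $\gcd_*(A_{(\frac{1}{2},1]})=1$. Then $A_{(\frac{1}{2},1]}+A_{(\frac{1}{2},1]}$ contains an interval $I\subset(n,2n]$ of length $2\left|A_{(\frac{1}{2},1]}\right|-1$, so every integer $m\leqslant2\left|A_{(\frac{1}{2},1]}\right|-1$ has a multiple in $I$; by the previous paragraph $B_0$ is therefore disjoint from $\left(\frac{n}{6},2\left|A_{(\frac{1}{2},1]}\right|-1\right]$. If $2\left|A_{(\frac{1}{2},1]}\right|-1\geqslant\left\lfloor\frac{n}{3}\right\rfloor$ this forces $B_0=\emptyset$, i.e. $A\subset\left(\frac{n}{2},n\right]$, whence the Case 1 argument (applied to $s=\min A>\frac{n}{2}$, using that $A-\{s\}$ has no two elements summing to $0$ modulo $s$, and splitting according to whether $s\leqslant\left\lfloor\frac{2n}{3}\right\rfloor$) gives $|A|\leqslant\left\lceil\frac{n}{3}\right\rceil$, a contradiction. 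Otherwise $B_0\subset\left(2\left|A_{(\frac{1}{2},1]}\right|-1,\frac{n}{3}\right]$, so $|A|=|B_0|+\left|A_{(\frac{1}{2},1]}\right|\leqslant\left\lfloor\frac{n}{3}\right\rfloor-2\left|A_{(\frac{1}{2},1]}\right|+1+\left|A_{(\frac{1}{2},1]}\right|\leqslant\left\lfloor\frac{n}{3}\right\rfloor$ whenever $A_{(\frac{1}{2},1]}\neq\emptyset$ (and if $A_{(\frac{1}{2},1]}=\emptyset$ then $A\subset\left[\frac{n}{2}\right]$ and the induction hypothesis already gives \eqref{inductiontheo}), again contradicting $|A|>\left\lceil\frac{n}{3}\right\rceil$.

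There remain the two degenerate outcomes of Theorem \ref{freimain}. If $\gcd_*(A_{(\frac{1}{2},1]})=d\geqslant2$, then $A_{(\frac{1}{2},1]}$ lies in an arithmetic progression of common difference $d$ inside $\left(\frac{n}{2},n\right]$, so $\left|A_{(\frac{1}{2},1]}\right|\leqslant\frac{n}{2d}+1\leqslant\frac{n}{4}+1$; writing $A_{(\frac{1}{2},1]}=e+d\cdot T$ with $\gcd_*(T)=1$ and applying Theorem \ref{freimain} to $T$ yields (in the small-doubling subcase) an interval inside $T+T$, hence an arithmetic progression of difference $d$ and length $2\left|A_{(\frac{1}{2},1]}\right|-1$ inside $A_{(\frac{1}{2},1]}+A_{(\frac{1}{2},1]}$; those $b\in\left(\frac{n}{6},\frac{n}{3}\right]$ that are coprime to $d$ and at most $2\left|A_{(\frac{1}{2},1]}\right|-1$ still have a multiple in this progression, hence are barred from $B_0$, which combined with the trivial bound $\left|A\cap\left[\frac{n}{2}\right]\right|\leqslant\left\lceil\frac{\lfloor n/2\rfloor}{3}\right\rceil$ from the induction hypothesis when $\left|A_{(\frac{1}{2},1]}\right|$ is small again contradicts $|A|>\left\lceil\frac{n}{3}\right\rceil$. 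The large-doubling outcome $\left|A_{(\frac{1}{2},1]}+A_{(\frac{1}{2},1]}\right|\geqslant3\left|A_{(\frac{1}{2},1]}\right|-3$ is treated in the same spirit, using instead Lemma \ref{doublinglemma} to count multiples of $3$ in $A_{(\frac{1}{2},1]}+A_{(\frac{1}{2},1]}$ together with the inequality $|B_1|+\left|A'''\right|\leqslant\left\lceil\frac{n}{3}\right\rceil$ from \eqref{dyadicembed}, with $A'''$ as in \eqref{A''definition}. The main obstacle is exactly the arithmetic bookkeeping in these two degenerate cases: one must verify that the coprimality losses incurred in passing modulo $d$ and the various additive constants never push the final count above $\left\lceil\frac{n}{3}\right\rceil$ where a genuine saving is available, and that the thin range of parameters left over automatically satisfies $\left|A_{(\frac{1}{2},\frac{2}{3}]}\right|\geqslant\frac{n}{48}-17$; the constant $\frac{1}{48}$ (and the slack $-17$) is precisely calibrated to make this dichotomy close.
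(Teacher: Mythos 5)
Your first reduction (the case $\left|A_{(\frac{1}{2},1]}\right|\geqslant\frac{n}{4}$) is fine, and the construction of $B_0\subset\left(\frac{n}{6},\frac{n}{3}\right]$ together with the observation that no multiple of any $b\in B_0$ can lie in $A_{(\frac{1}{2},1]}+A_{(\frac{1}{2},1]}$ is a legitimate (and genuinely different) embedding from the paper's $Z_{\frac{1}{2}}\subset\left(\frac{n}{3},\frac{n}{2}\right]$. The small-doubling, $\gcd_*=1$ branch of Theorem \ref{freimain} is also handled cleanly. However, the proof is incomplete: for the two remaining outcomes of Theorem \ref{freimain} — $\gcd_*\left(A_{(\frac{1}{2},1]}\right)=d\geqslant2$ and the large-doubling alternative $\left|A_{(\frac{1}{2},1]}+A_{(\frac{1}{2},1]}\right|\geqslant 3\left|A_{(\frac{1}{2},1]}\right|-3$ — you do not actually carry out the argument. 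You explicitly write that ``the main obstacle is exactly the arithmetic bookkeeping in these two degenerate cases,'' and your final sentence asserts, without proof, that the leftover parameter range ``automatically satisfies'' the lower bound. That is the entire content of the lemma in the hard regime, so this is a real gap, not a routine verification you are entitled to omit.

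Concretely, the gaps are: (i) in the $d\geqslant2$ branch you only remove from $B_0$ the $b\leqslant 2\left|A_{(\frac{1}{2},1]}\right|-1$ coprime to $d$, but you never quantify how many such $b$ exist in $\left(\frac{n}{6},\frac{n}{3}\right]$ nor check that the savings survive when $\left|A_{(\frac{1}{2},1]}\right|$ is not large (e.g. when $\left|A_{(\frac{1}{2},1]}\right|<\frac{n}{6}$ the excluded set can be empty and the trivial bound $\left|A\cap\left[\frac{n}{2}\right]\right|\leqslant\left\lceil\frac{n}{6}\right\rceil$ plus $\left|A_{(\frac{1}{2},1]}\right|$ only gives $|A|\leqslant\frac{n}{3}+O(1)$, not the floor/ceiling bound or the desired lower bound on $\left|A_{(\frac{1}{2},\frac{2}{3}]}\right|$); and (ii) the large-doubling branch is dismissed in a single clause with a pointer to \eqref{dyadicembed} and Lemma \ref{doublinglemma}, which is not a proof. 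For comparison, the paper avoids appealing to Theorem \ref{freimain} here altogether: it maps $A_{[\frac{1}{2}]}$ into $\left(\frac{n}{4},\frac{n}{2}\right]$ via powers of $2$, upgrades the part in $\left(\frac{n}{4},\frac{n}{3}\right]$ that is $2\bmod 4$ by a further factor $\frac{3}{2}$ to land in $\left(\frac{n}{3},\frac{n}{2}\right]$ (giving the set $Z_{\frac{1}{2}}$ with $\left|B_{\frac{1}{2}}\right|\leqslant\left|Z_{\frac{1}{2}}\right|+\frac{n}{16}+3$), and then bounds $\left|A_{(\frac{2}{3},1]}\right|+\left|Z_{\frac{1}{2}}\right|\leqslant\frac{n}{4}+14$ by a mod-$4$ case analysis and Lemma \ref{basicmultl}, which gives the bound uniformly without splitting on the structure of $A_{(\frac{1}{2},1]}$. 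Your route, if it can be made to close, would have to reproduce something like that quantitative analysis in both degenerate branches; as written it does not.
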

This bound is in fact stronger than we need, but the proof of \eqref{A_2genelowe} involves a basic version of a more elaborate argument that we shall employ later and so we give full details for the benefit of the reader.
\begin{proof}
Recall that $A_{[\frac{1}{2}]}=A\cap\left[\frac{n}{2}\right]$. Similarly to the way we previously constructed $B_1$, we now construct a set $B_{\frac{1}{2}}$ from $A_{[\frac{1}{2}]}$ as follows. Note that for every $a\in A_{[\frac{1}{2}]}$ there is a power of $2$, say $2^{p_a}$, such that $2^{p_a}a\in \left(\frac{n}{4},\frac{n}{2}\right]$ and let $B_{\frac{1}{2}}\vcentcolon=\left\{2^{p_a}a:a\in A_{[\frac{1}{2}]}\right\}\subset\left(\frac{n}{4},\frac{n}{2}\right]$. The map $a\mapsto 2^{p_a}a$ is injective by conclusion (2) in Lemma \ref{basicInteger} so that $\left|B_{\frac{1}{2}}\right|=\left|A_{[\frac{1}{2}]}\right|$. We further modify this construction so that the resulting set has as many of its elements as possible lying in $\left(\frac{n}{3},\frac{n}{2}\right]$. This will give stronger bounds because we have good control over how many numbers in $\left(\frac{n}{3},\frac{n}{2}\right]$ have a multiple in $A_{(\frac{2}{3},1]}+A_{(\frac{2}{3},1]}$. Thus, let 
\begin{equation}
    Z_{\frac{1}{2}} \vcentcolon= \bigg(B_{\frac{1}{2}}\cap\left(\frac{n}{3},\frac{n}{2}\right]\bigg)\cup\bigg\{\frac{3}{2}\!\cdot b: b \in B_{\frac{1}{2}}\cap\left(\frac{n}{4},\frac{n}{3}\right]\text{ and $b\equiv 2\!\!\!\!\mod 4$}\bigg\}.
\label{Z_1/2defi}
\end{equation} 
It is clear that $Z_{\frac{1}{2}}\subset \left(\frac{n}{3},\frac{n}{2}\right]$ and we will show that
\begin{equation}\left|B_{\frac{1}{2}}\right|= \left|Z_{\frac{1}{2}}\right|+\bigg|B_{\frac{1}{2}}\cap \left\{ b: b \in\left(\frac{n}{4},\frac{n}{3}\right]\text{ and $b\equiv 0,1$ or $3\!\!\!\!\mod 4$.}\right\}\bigg|\leqslant \left|Z_{\frac{1}{2}}\right|+\frac{n}{16}+3.
\label{lemmalo}
\end{equation}
The final inequality in \eqref{lemmalo} follows from using a trivial upper bound on the number of integers in $\left(\frac{n}{4},\frac{n}{3}\right]$ that are $0,1$ or $3\!\!\mod 4$. So it is enough to prove the first equality in \eqref{lemmalo} which follows from \eqref{Z_1/2defi} if we can prove that there are no incidences of the form $\frac{3}{2}\!\cdot b = b'$ for any $b,b'\in B_{\frac{1}{2}}$ with $b\equiv 2\!\!\mod 4$. This is rather easy to prove since $b,b'$ are of the form $2^{p_a}a,2^{p_{a'}}a'$ for some $a,a'\in A$ respectively. But $b' = \frac{3}{2}\!\cdot b$ is odd as $b\equiv 2\!\!\mod 4$ so that $p_{a'}=0$ and $b'=a'$. Then we get $(3\!\cdot2^{p_a})a=2a'$ contradicting (3) in Lemma \ref{basicInteger}. Hence, \eqref{lemmalo} follows.
So we obtain that
\begin{align}
    \left|A\setminus A_{(\frac{1}{2},\frac{2}{3}]}\right| 
    &= \left|A_{(\frac{2}{3},1]}\right|+\left|A_{[\frac{1}{2}]}\right|
    = \left|A_{(\frac{2}{3},1]}\right|+\left|B_{\frac{1}{2}}\right|\nonumber\\
    &\leqslant\left|A_{(\frac{2}{3},1]}\right|+\left|Z_{\frac{1}{2}}\right|+\frac{n}{16}+3, \label{jajaja}
\end{align}
where we used \eqref{lemmalo} for the final inequality. To prove this lemma, it now suffices to show that 
\begin{equation}
    \left|A_{(\frac{2}{3},1]}\right|+\left|Z_{\frac{1}{2}}\right|\leqslant \frac{n}{4}+14
\label{jajafinal}
\end{equation} as \eqref{jajaja} then shows that $\left|A\setminus A_{(\frac{1}{2},\frac{2}{3}]}\right|\leqslant \frac{15n}{48}+17$ thus giving either the desired bound \eqref{inductiontheo} or else the lower bound \eqref{A_2genelowe}.
\medskip

To prove \eqref{jajafinal}, let us write $A_{(\frac{2}{3},1]}^{i(4)}=A_{(\frac{2}{3},1]}\cap(i+4\!\cdot\!\mathbf{N})$ for $i=0,1,2,3$. First assume that either $A_{(\frac{2}{3},1]}^{1(4)}$ or $A_{(\frac{2}{3},1]}^{3(4)}$ is empty. Then we obtain 
\begin{equation}
    \left|A_{(\frac{2}{3},1]}\cap (1+2\!\cdot\!\mathbf{N})\right|\leqslant \max\left(\left|A_{(\frac{2}{3},1]}^{1(4)}\right|,\left|A_{(\frac{2}{3},1]}^{3(4)}\right|\right)\leqslant \frac{n}{12}+1
    \label{jaja1}
\end{equation} using a trivial bound on the number of integers in $\left(\frac{2n}{3},n\right]$ in a given residue class $\mod 4$. We also have that \begin{equation}
    \left|Z_{\frac{1}{2}}\right|+\left|A_{(\frac{2}{3},1]}\cap (2\!\cdot\!\mathbf{N})\right|\leqslant  \frac{n}{6}+1
\label{jaja2}
\end{equation} by applying Lemma \ref{basicmultl} with $B=Z_{\frac{1}{2}}$, $k=4$, $q=4$ and $I= \left(\frac{4n}{3},2n\right]$ as $A_{(\frac{2}{3},1]}+A_{(\frac{2}{3},1]}$ contains $2\cdot \left(A_{(\frac{2}{3},2]}\cap (2\!\cdot\!\mathbf{N})\right)$ so at least $\left|A_{(\frac{2}{3},1]}\cap (2\!\cdot\!\mathbf{N})\right|$ many multiples of $4$ in $I$, and as it is easy to see from the definition \eqref{Z_1/2defi} of $Z_{\frac{1}{2}}$ that every number in $4\!\cdot\!Z_{\frac{1}{2}}$ is an integer multiple of some $a\in A_{[\frac{1}{2}]}$. Combining the inequalities \eqref{jaja1} and \eqref{jaja2} gives $\left|Z_{\frac{1}{2}}\right|+\left|A_{(\frac{2}{3},1]}\right|\leqslant \frac{n}{12}+1+\frac{n}{6}+1$ so \eqref{jajafinal} holds as desired.
\smallskip

Finally, assume that neither $A_{(\frac{2}{3},1]}^{1(4)}$ nor $A_{(\frac{2}{3},1]}^{3(4)}$ is empty. Then $\left|A_{(\frac{2}{3},1]}^{1(4)}+A_{(\frac{2}{3},1]}^{3(4)}\right|\geqslant \left|A_{(\frac{2}{3},1]}^{1(4)}\right|+\left|A_{(\frac{2}{3},1]}^{3(4)}\right|-1$, and we also have $\left|A_{(\frac{2}{3},1]}^{j(4)}+A_{(\frac{2}{3},1]}^{j(4)}\right|\geqslant 2\left|A_{(\frac{2}{3},1]}^{j(4)}\right|-1$ for $j=0,2$. Hence, $A_{(\frac{2}{3},1]}+A_{(\frac{2}{3},1]}$ contains at least $$\max\left(\left|A_{(\frac{2}{3},1]}^{1(4)}\right|+\left|A_{(\frac{2}{3},1]}^{3(4)}\right|,2\left|A_{(\frac{2}{3},1]}^{0(4)}\right|,2\left|A_{(\frac{2}{3},1]}^{2(4)}\right|\right)-1\geqslant \frac{\left|A_{(\frac{2}{3},1]}\right|}{2}-1$$ multiples of 4 in $\left(\frac{4n}{3},2n\right]$. Using this bound in Lemma \eqref{basicmultl} with $B=Z_{\frac{1}{2}}$, $k=4$, $q=4$ and $I= \left(\frac{4n}{3},2n\right]$ as above yields the inequality $\left|Z_{\frac{1}{2}}\right|+\frac{\left|A_{(\frac{2}{3},1]}\right|}{2}-1\leqslant \frac{n}{6}+1$. So $\left|A_{(\frac{2}{3},1]}\right|+\left|Z_{\frac{1}{2}}\right|\leqslant \frac{n}{6}+2+\frac{\left|A_{(\frac{2}{3},1]}\right|}{2}\leqslant \frac{n}{4}+14$ by our Case 3 assumption $\left|A_{(\frac{2}{3},1]}\right|\leqslant\frac{n}{6}+24$. This proves \eqref{jajafinal} and hence concludes the proof of Lemma \ref{lemma4}.
\end{proof}
We have now finished the general set-up for our proof of Case 3. Recall that $A_{(\frac{1}{2},1]}^{i(3)} = A_{(\frac{1}{2},1]}\cap(i+3\cdot\mathbf{N})$ for $i=0,1,2$. As we mentioned before, we now split up the proof of Case 3 into two subcases depending on which of the two inequalities $\left|A_{(\frac{1}{2},1]}^{1(3)}\right|+\left|A_{(\frac{1}{2},1]}^{2(3)}\right|\geqslant \frac{2\left|A_{(\frac{1}{2},1]}\right|}{3},$ or $\left|A_{(\frac{1}{2},1]}^{0(3)}\right|\geqslant \frac{\left|A_{(\frac{1}{2},1]}\right|}{3}$ holds.
\vfill
\pagebreak

\begin{flushleft}
\textbf{{\large Subcase 3.1: $\left|A_{(\frac{1}{2},1]}^{1(3)}\right|+\left|A_{(\frac{1}{2},1]}^{2(3)}\right|\geqslant\frac{2\left|A_{(\frac{1}{2},1]}\right|}{3}$.}}
\end{flushleft}
\medskip

We want to apply Theorem \ref{freimain} to the sumset $A_{(\frac{1}{2},1]}^{1(3)}+A_{(\frac{1}{2},1]}^{2(3)}$, but we first need to show that neither of $A_{(\frac{1}{2},1]}^{1(3)}, A_{(\frac{1}{2},1]}^{2(3)}$ is empty. 
\begin{lemma}
If $\left|A_{(\frac{1}{2},1]}^{1(3)}\right|+\left|A_{(\frac{1}{2},1]}^{2(3)}\right|\geqslant\frac{2\left|A_{(\frac{1}{2},1]}\right|}{3}$ and one of $A_{(\frac{1}{2},1]}^{1(3)}, A_{(\frac{1}{2},1]}^{2(3)}$ is empty, then the desired bound \eqref{inductiontheo} holds.
\label{nonemptylemma}
\end{lemma}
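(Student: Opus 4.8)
The plan is to exploit the symmetry between the residues $1$ and $2$ modulo $3$ to reduce to the case $A_{(\frac12,1]}^{2(3)}=\emptyset$, so that $A_{(\frac12,1]}$ --- and hence also $A_{(\frac23,1]}\subseteq A_{(\frac12,1]}$ --- lies in the residue classes $0$ and $1$ modulo $3$, with $U:=A_{(\frac12,1]}^{1(3)}$ comprising at least two thirds of $A_{(\frac12,1]}$ by the hypothesis of Subcase~3.1. We may assume that the desired bound \eqref{inductiontheo} fails, so Lemma~\ref{lemma4} supplies the lower bound \eqref{A_2genelowe}; in particular $|A_{(\frac12,1]}|\geqslant |A_{(\frac12,\frac23]}|$ is of order $n$ up to a constant, so $U$ is non-empty and large, and since $U\subseteq\left(\frac n2,n\right]$ lies in an arithmetic progression of common difference $\gcd_*(U)$, which is a multiple of $3$, that common difference is at most an absolute constant.

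In the main argument of Subcase~3.1 one applies Theorem~\ref{freimain} to $A_{(\frac12,1]}^{1(3)}+A_{(\frac12,1]}^{2(3)}$, a sumset consisting of multiples of $3$ whose dilate by $\frac13$ therefore feeds into $A'''$ from \eqref{A''definition}; here that sumset is empty, so I would instead apply Theorem~\ref{freimain} to $S=T=U$. The two facts to keep in mind are that $U+U\subseteq(n,2n]\cap(2+3\cdot\mathbf{N})$ and that every element of $U+U$ is a sum of two elements of $A$ exceeding $\frac n2$, so that by property~P no element of $A\cap\left[\frac n2\right]$ divides any element of $U+U$.

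If conclusion~$(2)$ of Theorem~\ref{freimain} holds, then $U+U$ contains an arithmetic progression $Q$ of length $2|U|-1$ (of order $n$) and common difference $d=\gcd_*(U)$ (bounded); consequently every positive integer $m\leqslant\frac n2$ coprime to $d$ has a multiple in $Q\subseteq(n,2n]$ and hence $m\notin A$, so $A\cap\left[\frac n2\right]$ is confined to multiples of the boundedly many primes dividing $d$, and invoking the induction hypothesis in the sharper form \eqref{inductionboundmultiples} on these families forces $|A\cap[\frac n2]|$, and then $|A|=|A\cap[\frac n2]|+|A_{(\frac12,1]}|$, below $\left(\frac13-\delta\right)n+C$. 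If conclusion~$(1)$ holds, then $|U+U|\geqslant 3|U|-3\geqslant 2|A_{(\frac12,1]}|-3$, and I would run a disjointness--packing argument in each residue class modulo $3$ inside $(n,2n]$, in the style of the second half of Section~6: in the class $2$ modulo $3$, the set $U+U$, the dilate by $4$ of the part of $B_1$ lying in $\left(\frac n3,\frac n2\right]$ and in the right residue class, and the dilate by $4$ of the corresponding part of the auxiliary set $B_{\frac12}\subseteq\left(\frac n4,\frac n2\right]$ built from $A\cap[\frac n2]$ as in the proof of Lemma~\ref{lemma4}, are pairwise disjoint (each dilate consists of multiples of elements of $A\cap[\frac{2n}3]$ or of $A\cap[\frac n2]$, none of which can divide an element of $U+U$ by property~P) and all lie in a set of size at most $\frac n3+1$; the analogous packings in the classes $0$ and $1$ modulo $3$, together with \eqref{dyadicembed}, the identity $|A|=|A\cap[\frac n2]|+|A_{(\frac12,1]}|=|B_{\frac12}|+|A_{(\frac12,1]}|$, and the estimate $|A\cap 3\cdot\mathbf{N}\cap[\frac n2]|\leqslant\frac n{18}+C$ from \eqref{inductionboundmultiples}, then combine to yield $|A|\leqslant\left(\frac13-\delta\right)n+C$.

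I expect conclusion~$(1)$ to be the main obstacle: arranging the various dilates of $B_1$ and $B_{\frac12}$ to be genuinely pairwise disjoint, handling the coincidences between $B_1$ and $B_{\frac12}$ on their common elements $2^{j}a$, and accounting precisely for how much of each lands in a fixed residue class modulo $3$ inside $(n,2n]$ all require exactly the kind of modular bookkeeping carried out in Sections~6 and~7, and one has to verify that the combined packing inequalities really beat $\left(\frac13-\delta\right)n+C$ for the possibly rather small values of $|A_{(\frac12,1]}|$ that Case~3 permits (so that conclusion~$(1)$ may have to be split further according to the size of $|A_{(\frac12,1]}|$). A secondary subtlety is conclusion~$(2)$: one must ensure that $\gcd_*(U)$ being a bounded multiple of $3$ does not allow $A$ to remain large by concentrating on multiples of $\gcd_*(U)$, which is where the sharper use of the induction hypothesis via \eqref{inductionboundmultiples} is needed.
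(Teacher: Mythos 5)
The paper's proof of Lemma~\ref{nonemptylemma} does not use Theorem~\ref{freimain} at all, so your plan is a genuinely different route; unfortunately it has a real gap, and the paper's alternative is both simpler and correct. The paper exploits the fact that when $A_{(\frac12,1]}^{2(3)}=\emptyset$ and the Subcase~3.1 hypothesis holds, the set $U=A_{(\frac12,1]}^{1(3)}$ has density well above one half on the progression $\left(\frac n2,n\right]\cap(1+3\cdot\mathbf{N})$, so Lemma~\ref{doublinglemmad} (not Freiman) applies unconditionally with any modulus $q$ coprime to $3$. Taking $q=4$ gives at least $\tfrac12|U|-1$ elements of $U+U$ that are $8\bmod 12$, and $q=5$ gives at least $\tfrac25|U|-1$ elements that are $5\bmod 15$; packing the sets $4\cdot C_1^{2(3)}$ and $5\cdot C_1^{1(3)}$ (with $C_1$ the power-of-$2$ image of $\{a\in A\cap[\tfrac n2]:3\nmid a\}$ into $\left(\frac n4,\frac n2\right]$, as in \eqref{B_1'defi}) against these via Lemma~\ref{basicmultl}, and adding the induction bound \eqref{inductionboundmultiples} for the multiples of $3$, yields $|A|\leqslant\frac{14n}{45}+C+5$ directly, with no case split.

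The concrete error in your sketch is in the conclusion~$(2)$ branch. You assert that ``every positive integer $m\leqslant\frac n2$ coprime to $d$ has a multiple in $Q$,'' but the progression guaranteed by Theorem~\ref{freimain} has length $|Q|=2|U|-1$. Since $U\subset\left(\frac n2,n\right]\cap(1+3\cdot\mathbf{N})$ we have the trivial bound $|U|\leqslant\frac n6+1$, so $|Q|<\frac n3+2$, and in fact under Case~3 and the induction bound \eqref{inductionbound} the realistic range for $|Q|$ is roughly $\left[\frac{2n}{9},\frac n3\right]$ --- always strictly less than $\frac n2$. A number $m$ coprime to $d$ is only guaranteed a multiple in $Q$ when $m\leqslant|Q|$, so you cannot conclude that $A\cap\left[\frac n2\right]$ is confined to multiples of primes dividing $d$; you only get this for $A\cap\left[|Q|\right]$, and the residual interval $\left(|Q|,\frac n2\right]$ has length comparable to $\frac n5$ and is left uncontrolled. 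This is not a numerical nuisance but a missing idea: some additional device (such as the paper's use of two coprime moduli and the auxiliary set $C_1$) is needed to reach $\left(\frac13-\delta\right)n+C$. Your conclusion~$(1)$ branch is, as you say yourself, a programme rather than a proof --- the coincidences between $B_1$ and $B_{\frac12}$ on $\left(\frac n3,\frac n2\right]$ (where the two constructions literally agree) mean the three families you want to pack are not disjoint as stated, and the resulting packing inequalities have not been checked to beat the target when $\left|A_{(\frac12,1]}\right|$ is near its lower range --- so it too would need substantial additional work. The moral is that the lemma is a degenerate boundary case of Subcase~3.1 and the paper deliberately handles it by a shorter, Freiman-free argument rather than by forcing it into the same $S+T$ framework used elsewhere.
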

\begin{proof}
We prove the lemma when $A_{(\frac{1}{2},1]}^{2(3)}$ is empty as the case where $A_{(\frac{1}{2},1]}^{1(3)}$ is empty is analogous. First note that by using \eqref{inductionboundmultiples} with $l=1$ and $k=3$, we get \begin{equation}
\left|A\cap(3\cdot\mathbf{N})\right|\leqslant\frac{n}{9}+C.
\label{yaya0}
\end{equation} We now want to bound the number of elements in $A$ which are not multiples of $3$. We use a familiar type of construction to do this. For any number $a$ in $A\cap\left[\frac{n}{2}\right]$, there is a unique power of $2$, say $2^{p_a}$, so that $2^{p_a}a$ lies in $(\frac{n}{4},\frac{n}{2}]$.  Let
\begin{equation}
C_1\vcentcolon=\left\{2^{p_a}a:a\in A\cap\left[\frac{n}{2}\right]\text{ and } 3\nmid a\right\}.
\label{B_1'defi}
\end{equation} By (2) in Lemma \ref{basicInteger} there are no coincidences $2^{p_a}a=2^{p_b}b$ unless $a=b$, so that the map $a\mapsto 2^{p_a}a$ is an injection. Hence,
\begin{equation}
\left|C_1\right|=\left|\left\{a\in A_{[\frac{1}{2}]}: 3\nmid a\right\}\right|.
\label{B_1'decompo}
\end{equation}
Let $C_{1}^{i(3)} = C_1\cap(i+3\!\cdot\!\mathbf{N})$ for $i=1,2$ noting that by definition, $C_1$ contains no multiples of $3$. We have that $\left|A_{(\frac{1}{2},1]}\right|>\left(\frac{1}{3}-\delta\right)\frac{n}{2}$ by \eqref{inductionbound} so as $A_{(\frac{1}{2},1]}^{2(3)}$ is empty and $\left|A_{(\frac{1}{2},1]}^{1(3)}\right|+\left|A_{(\frac{1}{2},1]}^{2(3)}\right|\geqslant\frac{2\left|A_{(\frac{1}{2},1]}\right|}{3}$ by the assumptions of the lemma, we have that $\left|A_{(\frac{1}{2},1]}^{1(3)}\right|\geqslant\frac{2\left|A_{(\frac{1}{2},1]}\right|}{3}>\left(\frac{1}{3}-\delta\right)\frac{n}{3}$. So $A_{(\frac{1}{2},1]}^{1(3)}$ contains at least $\left(\frac{1}{3}-\delta\right)\frac{n}{3}$ of the numbers which are $1\!\!\mod 3$ in $\left(\frac{n}{2},n\right]$ and by choosing $\delta>0$ small enough, we can guarantee that $\left|A_{(\frac{1}{2},1]}^{1(3)}\right|>\left(\frac{1}{3}-\delta\right)\frac{n}{3}>\frac{n}{12}+3$. We can therefore use Lemma \ref{doublinglemmad} with $d=3$ and $q=4$ to deduce that $A_{(\frac{1}{2},1]}^{1(3)}+A_{(\frac{1}{2},1]}^{1(3)}$ contains at least $\frac{\left|A_{(\frac{1}{2},1]}^{1(3)}\right|}{2}-1$ multiples of 4. Note that every number in $A_{(\frac{1}{2},1]}^{1(3)}+A_{(\frac{1}{2},1]}^{1(3)}$ is  $2\!\!\mod 3$, so we have shown that $A_{(\frac{1}{2},1]}^{1(3)}+A_{(\frac{1}{2},1]}^{1(3)}$ contains at least $\frac{\left|A_{(\frac{1}{2},1]}^{1(3)}\right|}{2}-1$ numbers in $(n,2n]$ which are $8\!\!\mod12$. Observe that $4\cdot C_{1}^{2(3)}$ lies in $(n,2n]$ and also consists only of numbers which are $8\!\!\mod 12$. Lemma \ref{basicmultl} therefore gives \begin{equation}
\left|C_{1}^{2(3)}\right|+\frac{\left|A_{(\frac{1}{2},1]}^{1(3)}\right|}{2}-1\leqslant  \frac{n}{12}+1.
\label{yaya1}
\end{equation} 
The assumptions of lemma \ref{doublinglemmad} with $d=3$ and $q=5$ are also satisfied as $\left|A_{(\frac{1}{2},1]}^{1(3)}\right|>\left(\frac{1}{3}-\delta\right)\frac{n}{3}>\frac{n}{12}+3$, so $A_{(\frac{1}{2},1]}^{1(3)}+A_{(\frac{1}{2},1]}^{1(3)}$ contains at least $\frac{2\left|A_{(\frac{1}{2},1]}^{1(3)}\right|}{5}-1$ multiples of 5 and hence this many numbers in $(n,2n]$ congruent to $5\!\!\mod15$, again noting that $A_{(\frac{1}{2},1]}^{1(3)}+A_{(\frac{1}{2},1]}^{1(3)}$ consists of numbers which are $2\!\!\mod 3$ only. Note that $5\cdot C_{1}^{1(3)}\subset\left(n,\frac{5n}{2}\right]\cap(5+15\cdot\mathbf{N})$ consists only of multiples of numbers in $A_{[\frac{1}{2}]}$ so Lemma \ref{basicmultl} gives 
\begin{equation}
    \left|C_{1}^{1(3)}\right|+\frac{2\left|A_{(\frac{1}{2},1]}^{1(3)}\right|}{5}-1\leqslant  \frac{n}{10}+1.
\label{yaya2}
\end{equation}
Using \eqref{B_1'decompo} and that $A_{(\frac{1}{2},1]}^{2(3)}$ is empty by assumption, we obtain
\begin{align*}
|A|&= \left|A\cap(3\cdot\mathbf{N})\right|+\left|A_{(\frac{1}{2},1]}^{1(3)}\right|+\left|A_{(\frac{1}{2},1]}^{2(3)}\right|+|C_1|\\
&=\left|A\cap(3\cdot\mathbf{N})\right|+\left|A_{(\frac{1}{2},1]}^{1(3)}\right|+\left|C_{1}^{1(3)}\right|+\left|C_1^{2(3)}\right|\\
&\leqslant \frac{n}{9}+C+\left|A_{(\frac{1}{2},1]}^{1(3)}\right|+\frac{n}{12}+2-\frac{\left|A_{(\frac{1}{2},1]}^{1(3)}\right|}{2}+\frac{n}{10}+2-\frac{2\left|A_{(\frac{1}{2},1]}^{1(3)}\right|}{5}\\
&= \frac{\left|A_{(\frac{1}{2},1]}^{1(3)}\right|}{10}+\frac{53n}{180}+C+
4\leqslant \frac{14n}{45}+C+5
\end{align*} where we used \eqref{yaya0},\eqref{yaya1} and \eqref{yaya2} to obtain the third line, and for the final inequality we plugged in the trivial bound $\left|A_{(\frac{1}{2},1]}^{1(3)}\right|<\frac{n}{6}+1$ as there are at most this many numbers in $\left(\frac{n}{2},n\right]$ congruent to $1\!\!\mod 3$. This is stronger than the desired bound \eqref{inductiontheo} for $n$ sufficiently large and hence this concludes the proof of Lemma \ref{nonemptylemma}. 
\end{proof}
\bigskip

By Lemma \ref{nonemptylemma} may now assume that neither of $A_{(\frac{1}{2},1]}^{1(3)}, A_{(\frac{1}{2},1]}^{2(3)}$ is empty, so we can apply Theorem \ref{freimain} to $A_{(\frac{1}{2},1]}^{1(3)}+A_{(\frac{1}{2},1]}^{2(3)}$. Hence, we deduce that either conclusion $(1)$ from Theorem \ref{freimain} holds so $\left|A_{(\frac{1}{2},1]}^{1(3)}+A_{(\frac{1}{2},1]}^{2(3)}\right|\geqslant \left|A_{(\frac{1}{2},1]}^{1(3)}\right|+\left|A_{(\frac{1}{2},1]}^{2(3)}\right|+\min\left(\left|A_{(\frac{1}{2},1]}^{1(3)}\right|,\left|A_{(\frac{1}{2},1]}^{2(3)}\right|\right)-3$ or else that conclusion $(2)$ from Theorem \ref{freimain} holds so that $A_{(\frac{1}{2},1]}^{1(3)}+A_{(\frac{1}{2},1]}^{2(3)}$ contains an arithmetic progression $Q$ with common difference $d=\gcd_*\left(A_{(\frac{1}{2},1]}^{1(3)}+A_{(\frac{1}{2},1]}^{2(3)}\right)$ and size $|Q|\geqslant \left|A_{(\frac{1}{2},1]}^{1(3)}\right|+\left|A_{(\frac{1}{2},1]}^{2(3)}\right|-1$. Let us assume first that conclusion $(2)$ holds, so $A_{(\frac{1}{2},1]}^{1(3)}+A_{(\frac{1}{2},1]}^{2(3)}$ contains a long arithmetic progression $Q$. We show that we must have $d=3,6$ or $9$. Indeed, the sumset $A_{(\frac{1}{2},1]}^{1(3)}+A_{(\frac{1}{2},1]}^{2(3)}$ by definition consists of multiples of $3$ only so that $3|d$. Further, as $d=\gcd_*\left(A_{(\frac{1}{2},1]}^{1(3)}+A_{(\frac{1}{2},1]}^{2(3)}\right)$, we see that $A_{(\frac{1}{2},1]}^{1(3)}, A_{(\frac{1}{2},1]}^{2(3)}\subset\left(\frac{n}{2},n\right]$ both lie in progressions with common difference $d$. We can therefore trivially bound $\left|A_{(\frac{1}{2},1]}^{1(3)}\right|+ \left|A_{(\frac{1}{2},1]}^{2(3)}\right|\leqslant 2\left\lceil\frac{n}{2d}\right\rceil$. As we are in Subcase 3.1, we also get that \begin{equation}
\left|A_{(\frac{1}{2},1]}^{1(3)}\right|+ \left|A_{(\frac{1}{2},1]}^{2(3)}\right|\geqslant\frac{2\left|A_{(\frac{1}{2},1]}\right|}{3}> \left(\frac{1}{3}-\delta\right)\frac{n}{3} 
\label{3.1lowerbound}
\end{equation}
where the last inequality follows as $\left|A_{(\frac{1}{2},1]}\right|>\left(\frac{1}{3}-\delta\right)\frac{n}{2}$ by \eqref{inductionbound}. Comparing these upper and lower bounds on $\left|A_{(\frac{1}{2},1]}^{1(3)}\right|+ \left|A_{(\frac{1}{2},1]}^{2(3)}\right|$, we deduce that $d=3, 6$ or $9$. We deal with these three cases separately.
\medskip
\begin{flushleft}
\textbf{{3.1.1: Let $\gcd_*\left(A_{(\frac{1}{2},1]}^{1(3)}+A_{(\frac{1}{2},1]}^{2(3)}\right) = 9$.}}
\end{flushleft}
\medskip

In this case, $A_{(\frac{1}{2},1]}^{1(3)}$ and $A_{(\frac{1}{2},1]}^{2(3)}$ are subsets of $\left(\frac{n}{2},n\right]$ which each lie in a progression with common difference $d=9$, so we deduce that $A_{(\frac{1}{2},1]}^{1(3)}$ is contained in the set of numbers in $\left(\frac{n}{2},n\right]$ which are $a_1 \!\!\mod 9$ and that $A_{(\frac{1}{2},1]}^{2(3)}$ is contained in the set of numbers in $\left(\frac{n}{2},n\right]$ which are $a_2\!\!\mod 9$, for some $a_1\in\{1,4,7\}$ and $a_2\in\{2,5,8\}$.
First assume that $a_1+a_2\equiv 0\!\!\mod 9$, then $A_{(\frac{1}{2},1]}^{1(3)}+A_{(\frac{1}{2},1]}^{2(3)}$ contains at least $\left|A_{(\frac{1}{2},1]}^{1(3)}\right|+\left|A_{(\frac{1}{2},1]}^{2(3)}\right|-1\geqslant\left(\frac{1}{3}-\delta\right)\frac{n}{3}-1$ multiples of 9 in $(n,2n]$ by \eqref{3.1lowerbound}. Hence, from the definition \eqref{A''definition} we see that $A'''$ contains at least $\left(\frac{1}{3}-\delta\right)\frac{n}{3}-1$ many multiples of $3$ in $\left(\frac{n}{3},\frac{2n}{3}\right]$. Using that $B_1$ and $A'''$ are disjoint subsets of $\left(\frac{n}{3},\frac{2n}{3}\right]$ (which we proved in the paragraph above \eqref{dyadicembed}), there can be at most $\left\lceil\frac{n}{9}\right\rceil-\left(\frac{1}{3}-\delta\right)\frac{n}{3}+1\leqslant \frac{\delta n}{3}+2$ multiples of $3$ in $B_1$. By the definition \eqref{dyadicembedding} of $B_1$, this implies that there are also at most $\frac{\delta n}{3}+2$ multiples of $3$ in $A\cap\left[\frac{2n}{3}\right]$. To bound the number of elements of $A\cap\left[\frac{2n}{3}\right]$ which are not a multiple of $3$, we use the following construction. For any $a\in A\cap\left[\frac{n}{2}\right]$ which is not a multiple of $3$, there is a unique a power of $2$, say $2^{l_a}$, so that $2^{l_a}a$ lies in $\left(\frac{n}{2},n\right]$. Let $C_1'$ be the set of these numbers so $$C_1'= \left\{2^{l_a}a:a\in A\cap \left[\frac{n}{2}\right] \text{ and } 3\nmid a\right\},$$ and in particular $C_1'$ consists only of even numbers. By (2) in Lemma \ref{basicInteger} the map $a\mapsto 2^{l_a}a$ is injective so that 
\begin{equation}
    \left|A\cap\left[\frac{n}{2}\right]\right|\leqslant \left|C_1'\right|+\frac{\delta n}{3}+2,
\label{dada0}
\end{equation} where the extra terms $\frac{\delta n}{3}+2$ account for the possible existence of this many multiples of $3$ in $A\cap\left[\frac{n}{2}\right]$. As $A$ has property P, the sets $C_1',A_{(\frac{1}{2},1]}^{1(3)}$ and $A_{(\frac{1}{2},1]}^{2(3)}$ must be pairwise disjoint subsets of $\left(\frac{n}{2},n\right]$. Now note that every number in $A_{(\frac{1}{2},1]}^{1(3)}\cup A_{(\frac{1}{2},1]}^{2(3)}\cup C_1'$ lies in one of the residue classes $a_1,a_1+9,a_2,a_2+9,2,4,8,10,14,16 \mod 18$ because $C_1'$ contains only even numbers not divisible by $3$, while $A_{(\frac{1}{2},1]}^{1(3)}$ and $A_{(\frac{1}{2},1]}^{2(3)}$ only contain numbers which are $a_1,a_2\!\!\mod 9$. Amongst these $10$ numbers there are in fact only $8$ distinct residues modulo $18$ as one of $a_1,a_1+9$ must be even and not a multiple of $3$, and similarly for one of $a_2,a_2+9$. Hence, the trivial bound gives \begin{equation}
\left|A_{(\frac{1}{2},1]}^{1(3)}\right|+\left|A_{(\frac{1}{2},1]}^{2(3)}\right|+\left|C_1'\right|\leqslant 8\left\lceil \frac{n}{36}\right\rceil
\label{dada1}
\end{equation} as the interval $\left(\frac{n}{2},n\right]$ contains at most $\left\lceil \frac{n}{36}\right\rceil$ numbers in any given residue class $\!\!\mod 18$. As we are in Subcase 3.1, $\left|A_{(\frac{1}{2},1]}^{0(3)}\right|<\frac{\left|A_{(\frac{1}{2},1]}^{1(3)}\right|+\left|A_{(\frac{1}{2},1]}^{2(3)}\right|}{2}\leqslant \left\lceil \frac{n}{18}\right\rceil$ where the final bound follows as $A_{(\frac{1}{2},1]}^{1(3)}$ and $A_{(\frac{1}{2},1]}^{2(3)}$ lie in progressions with common difference $9$ by the assumption of case 3.1.1. So by combining \eqref{dada0}, \eqref{dada1} and this bound on $\left|A_{(\frac{1}{2},1]}^{0(3)}\right|$, we get
\begin{align*}
|A|&= \left|A\cap\left[\frac{n}{2}\right]\right|+\left|A_{(\frac{1}{2},1]}^{1(3)}\right|+\left|A_{(\frac{1}{2},1]}^{2(3)}\right|+\left|A_{(\frac{1}{2},1]}^{0(3)}\right|\\
&\leqslant \frac{\delta n}{3}+2+\left|C_1'\right|+\left|A_{(\frac{1}{2},1]}^{1(3)}\right|+\left|A_{(\frac{1}{2},1]}^{2(3)}\right|+\left\lceil\frac{n}{18}\right\rceil\\
&\leqslant  \frac{\delta n}{3}+2+8\left\lceil \frac{n}{36}\right\rceil+\left\lceil \frac{n}{18}\right\rceil\\
&\leqslant \frac{5n}{18}+\frac{\delta n}{3}+11,
\end{align*}
as desired.
\medskip

To finish the proof of case 3.1.1, we need to consider the remaining possibilities where $a_1+a_2\equiv 3,6 \!\!\mod 9$. These are very similar so we only give the proof when $a_1+a_2\equiv 6\!\!\mod 9$. Recall that by \eqref{3.1lowerbound},  $A_{(\frac{1}{2},1]}^{1(3)}+A_{(\frac{1}{2},1]}^{2(3)}$ contains at least $\left|A_{(\frac{1}{2},1]}^{1(3)}\right|+\left|A_{(\frac{1}{2},1]}^{2(3)}\right|-1\geqslant\left(\frac{1}{3}-\delta\right)\frac{n}{3}-1$ numbers in $(n,2n]$ which are $6\!\!\mod 9$. From the definition \eqref{A''definition}, $A'''$ therefore contains all except at most \begin{equation}
    \left\lceil\frac{n}{9}\right\rceil-\left(\frac{1}{3}-\delta\right)\frac{n}{3}-1\leqslant \frac{\delta n}{3}+2
\label{zaza0}
\end{equation} of the numbers which are $2\!\!\mod 3$ in $\left(\frac{n}{3},\frac{2n}{3}\right]$. We will use this fact to bound the number of integers in $A\cap\left[\frac{2n}{3}\right]$ which are not divisible by $3$. To do this, note that we can multiply any $a\in A\cap[\frac{n}{2}]$ by a power of $2$, say $2^{p_a}$, so that $2^{p_a}a$ lies in $\left(\frac{n}{4},\frac{n}{2}\right]$. Let
\begin{equation}
C_1\vcentcolon=\left\{2^{p_a}a:a\in A\cap\left[\frac{n}{2}\right]\text{ and } 3\nmid a\right\}
\label{zazaxtra}
\end{equation} be the resulting set and note that this is the same construction that we defined in \eqref{B_1'defi}. Note that $C_1$ contains no multiples of $3$ by definition. Comparing the constructions \eqref{dyadicembedding} of $B_1$ and \eqref{zazaxtra} of $C_1$, we see that $$C_1\cap\left(\frac{n}{3},\frac{n}{2}\right]\subset B_1\text{ and } 2\cdot\bigg( C_1\cap\left(\frac{n}{4},\frac{n}{3}\right] \bigg)\subset B_1.$$ In the paragraph preceding \eqref{dyadicembed}, we showed that $B_1$ and $A'''$ are disjoint. By \eqref{zaza0}, $A'''$ contains all except at most $\frac{\delta n}{3}+2$ of the numbers which are $2\!\!\mod 3$ in $\left(\frac{n}{3},\frac{2n}{3}\right]$, so the disjointness of $A'''$ and $B_1$ and the two inclusions above imply that $\left(C_1\cap\left(\frac{n}{3},\frac{n}{2}\right]\right)\cup2\cdot\bigg( C_1\cap\left(\frac{n}{4},\frac{n}{3}\right] \bigg)$ contains at most $\frac{\delta n}{3}+2$ numbers which are $2\!\!\mod 3$. Hence, with the exception of at most $\frac{\delta n}{3}+2$ numbers in $C_1$, every element of $C_1\cap\left(\frac{n}{4},\frac{n}{3}\right]$ must be $2\!\!\mod 3$ and every element of $C_1\cap\left(\frac{n}{3},\frac{n}{2}\right]$ must be $1\!\!\mod 3$. This yields the bound \begin{equation}
    \left|C_1\right|\leqslant \left\lceil \frac{n}{36}\right\rceil+\left\lceil \frac{n}{18}\right\rceil+\frac{\delta n}{3}+2.
\label{zaza1}
\end{equation} By \eqref{inductionboundmultiples}, we have
\begin{equation}
\left|A\cap(3\!\cdot\!\mathbf{N})\cap\left[\frac{n}{2}\right]\right|\leqslant \frac{n}{18}+C.
\label{zaza2}
\end{equation} As we are in case 3.1.1, each of $A_{(\frac{1}{2},1]}^{1(3)},A_{(\frac{1}{2},1]}^{2(3)}$ lie in progressions with common difference $9$ so that $\left|A_{(\frac{1}{2},1]}\right|\leqslant \frac{3\left|A_{(\frac{1}{2},1]}^{1(3)}\right|+3\left|A_{(\frac{1}{2},1]}^{2(3)}\right|}{2}\leqslant 3\left\lceil \frac{n}{18}\right\rceil$ where the first inequality follows from the assumption of Subcase 3.1. Noting that $\left|A_{[\frac{1}{2}]}\right|=\left|A\cap(3\!\cdot\!\mathbf{N})\cap\left[\frac{n}{2}\right]\right|+|C_1|$ by \eqref{B_1'decompo} and then using \eqref{zaza1}, \eqref{zaza2} and this bound on $\left|A_{(\frac{1}{2},1]}\right|$ gives \begin{align*}
|A|&=\left|A_{(\frac{1}{2},1]}\right|+\left|A_{[\frac{n}{2}]}\right|\\
&\leqslant 3\left\lceil \frac{n}{18}\right\rceil+\left|A\cap(3\!\cdot\!\mathbf{N})\cap\left[\frac{n}{2}\right]\right|+\left|C_1\right| \\
&\leqslant 3\left\lceil \frac{n}{18}\right\rceil+\frac{n}{18}+C+\left\lceil \frac{n}{36}\right\rceil+\left\lceil \frac{n}{18}\right\rceil+\frac{\delta n}{3}+2\\
&\leqslant \frac{11n}{36}+\frac{\delta n}{3}+C+7.
\end{align*} This is stronger than the desired bound \eqref{inductiontheo} for $n$ sufficiently large and this finishes the proof of case 3.1.1. \qed
\medskip
\begin{flushleft}
\textbf{{3.1.2: Let $\gcd_*\left(A_{(\frac{1}{2},1]}^{1(3)}+A_{(\frac{1}{2},1]}^{2(3)}\right) = 6$.}}
\end{flushleft}
\medskip

Because $\gcd_*\left(A_{(\frac{1}{2},1]}^{1(3)}+A_{(\frac{1}{2},1]}^{2(3)}\right) = 6$ and $A_{(\frac{1}{2},1]}^{1(3)}+A_{(\frac{1}{2},1]}^{2(3)}$ consists of multiples of $3$ only, the progression $Q\subset A_{(\frac{1}{2},1]}^{1(3)}+A_{(\frac{1}{2},1]}^{2(3)}$ that we obtained from conclusion $(2)$ of Theorem \ref{freimain} must be fully contained in either $6\cdot\mathbf{N}$ or $3+6\cdot\mathbf{N}$. These two possibilities require different arguments. Let us begin with the first case, so assume that $Q\subset A_{(\frac{1}{2},1]}^{1(3)}+A_{(\frac{1}{2},1]}^{2(3)}$ is a progression of size  
\begin{equation}
    |Q|\geqslant \left|A_{(\frac{1}{2},1]}^{1(3)}\right|+\left|A_{(\frac{1}{2},1]}^{2(3)}\right|-1\geqslant\frac{2\left|A_{(\frac{1}{2},1]}\right|}{3}-1
\label{Qlowerb}
\end{equation}(the final inequality follows as we are in Subcase 3.1) with common difference 6 consisting of multiples of 6. So $\frac{1}{3}\cdot Q\subset A'''\cap (2\cdot\mathbf{N})$ by the definition \eqref{A''definition} of $A'''$. Recall the construction \eqref{dyadicembedding} of $B_1$ and let $E_{B_1},O_{B_1}$ be the even/odd numbers in $B_1$. By the discussion preceding \eqref{dyadicembed}, we have that $E_{B_1}\subset B_1$ and $\frac{1}{3}\cdot Q\subset A'''\cap(2\cdot\mathbf{N})$ are disjoint sets of even numbers contained in $\left(\frac{n}{3},\frac{2n}{3}\right]$. This yields the bound \begin{equation}
    |E_{B_1}|\leqslant \frac{n}{6}+1 -|Q| \leqslant  \frac{n}{6} - \frac{2\left|A_{(\frac{1}{2},1]}\right|}{3}+2.
    \label{eq1}
\end{equation}
We constructed $B_1$ from $A\cap\left[\frac{2n}{3}\right]$ by multiplying every $a\in A\cap\left[\frac{2n}{3}\right]$ by an appropriate power $2^{j_a}$ of 2 as described in \eqref{dyadicembedding}. In particular, all odd numbers in $B_1\cap\left(\frac{n}{2},\frac{2n}{3}\right]$ must lie in $A_{(\frac{1}{2},\frac{2}{3}]}= A\cap\left(\frac{n}{2},\frac{2n}{3}\right]$ as they cannot have been obtained from multiplying a number in $A$ by a power of 2 greater than 1. So \begin{equation}
    \left|O_{B_1}\right|\leqslant \left|A_{(\frac{1}{2},\frac{2}{3}]}\right|+\left|O_{B_1}\cap\left(\frac{n}{3},\frac{n}{2}\right]\right|.
    \label{shii}
\end{equation}
To make use of this inequality, we need an upper bound on $\left|O_{B_1}\cap\left(\frac{n}{3},\frac{n}{2}\right]\right|$. The progression $Q\subset(n,2n]\cap (6\!\cdot\! \mathbf{N})$ has common difference $6$ and length at least $\frac{2\left|A_{(\frac{1}{2},1]}\right|}{3}-1>\left(\frac{1}{3}-\delta\right)\frac{n}{3}-1$ by \eqref{Qlowerb} and as $\left|A_{(\frac{1}{2},1]}\right|>\left(\frac{1}{3}-\delta\right)\frac{n}{2}$ by \eqref{inductionbound}. So $Q$ contains at least $\left(\frac{1}{3}-\delta\right)\frac{n}{12}-2$ numbers which are $12\!\!\mod 24$. $Q\subset A_{(\frac{1}{2},1]}+A_{(\frac{1}{2},1]}$ is contained in $(n,2n]$ and the interval $\left(n,\frac{4n}{3}\right]$ contains at most $\frac{n}{72}+1$ numbers congruent to $12 \!\!\mod 24$. Hence for $\delta>0$ sufficiently small, $Q$ contains at least $\left(\frac{1}{3}-\delta\right)\frac{n}{12}-2-\frac{n}{72}-1> \frac{n}{100}$ of numbers in $\left(\frac{4n}{3},2n\right]$ which are $12 \!\!\mod 24$. Note that all these numbers lie in $A_{(\frac{1}{2},1]}+A_{(\frac{1}{2},1]}$ and are of the form $4x$ for an odd integer $x\in \left(\frac{n}{3},\frac{n}{2}\right]$ so that as $A$ has property P, no such $x$ can lie in $B_1$. This shows that $\big|O_{B_1}\cap\left(\frac{n}{3},\frac{n}{2}\right]\big|\leqslant  \frac{n}{12}+1- \frac{n}{100}$ as there are at most $ \frac{n}{12}+1$ odd numbers in $\left(\frac{n}{3},\frac{n}{2}\right]$. Hence, \eqref{shii} gives
\begin{equation}
    \left|O_{B_1}\right|-\left|A_{(\frac{1}{2},\frac{2}{3}]}\right|\leqslant\frac{n}{12}+1-\frac{n}{100}.\label{eq2}
\end{equation}
By starting with \eqref{Adecompo} and combining inequalities \eqref{eq1} and \eqref{eq2} to bound $|E_{B_1}|$ and $|O_{B_1}|$, we deduce the desired bound on $|A|$ as follows
\begin{align}
    |A| &= \left|A_{(\frac{2}{3},1]}\right|+|B_1| = \left|A_{(\frac{1}{2},1]}\right|-\left|A_{(\frac{1}{2},\frac{2}{3}]}\right|+|B_1| \nonumber\\
    &= \left|A_{(\frac{1}{2},1]}\right|+|E_{B_1}|+|O_{B_1}|-\left|A_{(\frac{1}{2},\frac{2}{3}]}\right|\nonumber\\
    &\leqslant \left|A_{(\frac{1}{2},1]}\right|+ \frac{n}{6}-\frac{2\left|A_{(\frac{1}{2},1]}\right|}{3}+2 +\frac{n}{12}+1-\frac{n}{100}\nonumber\\
    &= \frac{\left|A_{(\frac{1}{2},1]}\right|}{3}+\frac{6n}{25}+3\nonumber\\
    &\leqslant \frac{n}{12}+13 +\frac{6n}{25}+3\nonumber= \frac{n}{3}-\frac{n}{100}+16,\nonumber
\end{align}
noting for the penultimate inequality that $\left|A_{(\frac{1}{2},1]}\right| = \left|A_{(\frac{1}{2},\frac{2}{3}]}\right|+\left|A_{(\frac{2}{3},1]}\right|\leqslant \frac{3\left|A_{(\frac{2}{3},1]}\right|}{2}+1\leqslant\frac{n}{4}+37$ using \eqref{A_2lowe} and that we are in Case 3 so $\left|A_{(\frac{2}{3},1]}\right|< \frac{n}{6}+24$.
\medskip

To complete the argument in case 3.1.2, we still need to consider the case where $Q$ is an arithmetic progression consisting of numbers which are $3\!\!\mod 6$ in $\left(n,2n\right]$. Again we split $B_1$ into $E_{B_1},O_{B_1}$, its even/odd elements. Now $\frac{1}{3}\cdot Q$ is a subset of  $A'''$ by definition \eqref{A''definition}, so $O_{B_1}$ and  $\frac{1}{3}\cdot Q$ are disjoint sets of odd numbers in $\left(\frac{n}{3},\frac{2n}{3}\right]$ by the discussion preceding \eqref{dyadicembed}. Hence,
\begin{align}|O_{B_1}|&\leqslant \frac{n}{6}+1-|Q|\leqslant \frac{n}{6}-\left|A_{(\frac{1}{2},1]}^{1(3)}\right|-\left|A_{(\frac{1}{2},1]}^{2(3)}\right|+2
\label{oddA'}
\end{align}
by \eqref{Qlowerb}.
We also have that $\frac{2}{3}\cdot A_{(\frac{1}{2},1]}^{0(3)}\subset A'''$ by \eqref{A''definition} so $E_{B_1}$ and $\frac{2}{3}\cdot A_{(\frac{1}{2},1]}^{0(3)}$ are disjoint sets of even numbers in $\left(\frac{n}{3},\frac{2n}{3}\right]$. Hence, \begin{equation}|E_{B_1}|\leqslant \frac{n}{6}+1-\left|A_{(\frac{1}{2},1]}^{0(3)}\right|.
\label{evenA'}
\end{equation}
Using \eqref{Adecompo} and plugging in the inequalities \eqref{oddA'} and \eqref{evenA'} gives
\begin{align*}
|A|&= \left|A_{(\frac{2}{3},1]}\right|+|B_1|= \left|A_{(\frac{2}{3},1]}\right|+\left|E_{B_1}\right|+\left|O_{B_1}\right| \\ &\leqslant \left|A_{(\frac{2}{3},1]}\right|+\frac{n}{3}+3-\left|A_{(\frac{1}{2},1]}^{0(3)}\right|-\left|A_{(\frac{1}{2},1]}^{1(3)}\right|-\left|A_{(\frac{1}{2},1]}^{2(3)}\right|\\ &\leqslant  \frac{n}{3}+3-\left|A_{(\frac{1}{2},\frac{2}{3}]}\right|,
\end{align*}
as $\left|A_{(\frac{1}{2},1]}^{0(3)}\right|+\left|A_{(\frac{1}{2},1]}^{1(3)}\right|+\left|A_{(\frac{1}{2},1]}^{2(3)}\right|=\left|A_{(\frac{1}{2},1]}\right|=\left|A_{(\frac{2}{3},1]}\right|+\left|A_{(\frac{1}{2},\frac{2}{3}]}\right|$ by definition. Plugging in the lower bound \eqref{A_2genelowe} then gives the desired bound \eqref{inductiontheo} on $|A|$ and this concludes the proof of case 3.1.2.
\qed
\medskip

\begin{flushleft}
\textbf{{3.1.3: Let $\gcd_*\left(A_{(\frac{1}{2},1]}^{1(3)}+A_{(\frac{1}{2},1]}^{2(3)}\right)=3$.}}
\end{flushleft}
\medskip

Under the assumption that $\gcd_*\left(A_{(\frac{1}{2},1]}^{1(3)}+A_{(\frac{1}{2},1]}^{2(3)}\right) = 3$, conclusion $(2)$ from Theorem \ref{freimain} gives a progression $Q\subset A_{(\frac{1}{2},1]}^{1(3)}+A_{(\frac{1}{2},1]}^{2(3)}$ with common difference 3 consisting of multiples of 3 and having size $|Q|= \left|A_{(\frac{1}{2},1]}^{1(3)}\right|+\left|A_{(\frac{1}{2},1]}^{2(3)}\right|-1\geqslant\frac{2\left|A_{(\frac{1}{2},1]}\right|}{3}-1$, where the final inequality follows from the assumption of Subcase 3.1. Since $\left|A_{(\frac{1}{2},1]}\right|> \left(\frac{1}{3}-\delta\right)\frac{n}{2}$ by \eqref{inductionbound}, we find that $Q$ has length at least $\left(\frac{1}{3}-\delta\right)\frac{n}{3}-1\geqslant \frac{n}{9}-\delta n$ for $n$ sufficiently large. From this and as $A$ has property P, we conclude that $A$ contains no numbers in $\left[\frac{n}{9}-\delta n\right]$ because any $x\in \left[\frac{n}{9}-\delta n\right]$ has a multiple in the arithmetic progression $Q\subset A_{(\frac{1}{2},1]}+A_{(\frac{1}{2},1]}$ precisely because $|Q|\geqslant \frac{n}{9}-\delta n\geqslant x$. Furthermore since $Q$ consists of multiples of 3 only, $A$ also contains no multiples of 3 in $\left[\frac{n}{3}-3\delta n\right]$ because all multiples of $3$ in $\left[\frac{n}{3}-3\delta n\right]$ also divide an element of $Q$. Indeed if $x$ is an integer with $3x\in\left[\frac{n}{3}-3\delta n\right]$, then as $\frac{1}{3}\cdot Q$ is an interval of length at least $\frac{n}{9}-\delta n \geqslant x$ it must contain a multiple of $x$ so that $3x$ has a multiple in $Q$ as we claimed. For notational convenience, we shall in fact assume for the remainder of this case 3.1.3 that $A$ contains no elements in $A\cap\left[\frac{n}{9}\right]$ and no multiples of $3$ in $A\cap \left[\frac{n}{3}\right]$. Note that by the argument above, we need to remove at most $4\delta n$ elements from $A$ so that the resulting set satisfies this extra assumption. To finish the proof of case 3.1.3, we prove a strong enough upper bound on the size of sets $A$ satisfying this extra assumption (namely that $A$ contains no elements in $A\cap\left[\frac{n}{9}\right]$ and no multiples of $3$ in $A\cap \left[\frac{n}{3}\right]$) so that the desired bound \eqref{inductiontheo} holds even after adding $4\delta n$ to it.
\medskip

We previously constructed the set $B_1$ from $A\cap\left[\frac{2n}{3}\right]$  using powers of $2$ and later used similar constructions for $C_1$ and $C_1'$. Now we will make use of a different construction to obtain another auxiliary set $B_2$ from $A\cap\left[\frac{n}{2}\right]$. The set $B_2$ is more tedious to define, and we describe its construction using the following steps. From the discussion in the first paragraph, we may assume that $A\cap\left[\frac{n}{9}\right]$ is empty. Next, for every $a\in A\cap \left(\frac{n}{9},\frac{n}{6}\right]$, we add $3a$ to the set $B_2$. For every $a\in A\cap\left(\frac{n}{6},\frac{n}{4}\right]$, we add $2a$ to the set $B_2$. Also, for each even number $a \in A\cap\left(\frac{n}{4},\frac{n}{3}\right]$, observe that $\frac{3a}{2}$ is an integer and we add $\frac{3a}{2}$ to $B_2$. Finally we add all odd numbers in $A\cap\left(\frac{n}{4},\frac{n}{3}\right]$ and all of $A\cap\left(\frac{n}{3},\frac{n}{2}\right]$ to $B_2$. At the end of this process, we obtain a set of integers $B_2\subset(\frac{n}{4},\frac{n}{2}]$. In fact, $B_2$ can be defined explicitly as follows
\begin{align}
B_2\vcentcolon= \;\;\;\;&3\!\cdot\!\bigg(A\cap \left(\frac{n}{9},\frac{n}{6}\right]\bigg)
\label{B_2definition}\\
\bigcup&\;2\!\cdot\!\bigg(A\cap \left(\frac{n}{6},\frac{n}{4}\right]\bigg)\nonumber\\
\bigcup&\;\frac{3}{2}\!\cdot\!\bigg(A\cap(2\!\cdot\!\mathbf{N})\cap \left(\frac{n}{4},\frac{n}{3}\right]\bigg)\nonumber\\
\bigcup&\;\bigg(A\cap(1+2\!\cdot\!\mathbf{N})\cap \left(\frac{n}{4},\frac{n}{3}\right]\bigg)\nonumber\\
\bigcup&\;\bigg(A\cap\left(\frac{n}{3},\frac{n}{2}\right]\bigg).\nonumber
\end{align}
Note that we can view the construction of $B_2$ as the image of a function $A\cap\left[ \frac{n}{2}\right]\to B_2$ which maps each $a\in A\cap\left[ \frac{n}{2}\right]$ to $3a,2a,\frac{3a}{2}$ or $a$ according to whether $a$ lies in $\left(\frac{n}{9},\frac{n}{6}\right], \left(\frac{n}{6},\frac{n}{4}\right], (2\!\cdot\!\mathbf{N})\cap \left(\frac{n}{4},\frac{n}{3}\right]$ or $\bigg((1+2\!\cdot\!\mathbf{N})\cap \left(\frac{n}{4},\frac{n}{3}\right]\bigg)
\bigcup \left(\frac{n}{3},\frac{n}{2}\right]$ respectively.
We show that this mapping is injective so that
\begin{equation}
|B_2|=\bigg|A\cap\left[\frac{n}{2}\right]\bigg|  \label{B_2equals}
\end{equation} which implies that \begin{equation}
|A|= \left|A_{(\frac{1}{2},1]}\right|+|B_2|
\label{B_2decompo}
\end{equation} recalling that $A_{(\frac{1}{2},1]}= A\cap\left(\frac{n}{2},n\right]$. Indeed, if this mapping from $A\cap\left[\frac{n}{2}\right]$ to $B_2$ is not injective, then there are incidences of the form $2a = b$, $3a = b$, $3a = 2b$, $\frac{3a}{2} = b$, $3a= \frac{3b}{2}$ or $\frac{3a}{2}=2b$ for some $a,b\in A\cap \left[ \frac{n}{2}\right]$. However, the first five of these would imply that $b>a$ and $a|b+b$ so they are impossible as $A$ has property P. The last one $\frac{3a}{2}=2b$ is impossible too as it would imply that $b$ is a multiple of 3 in $A$ with $b\leqslant \frac{n}{3}$, but we are assuming after our discussion in the first paragraph of case 3.1.3 that $A$ contains no multiples of $3$ in $\left[\frac{n}{3}\right]$.
The point of this somewhat elaborate construction is that we can obtain good upper bounds on $|B_2|$ which then immediately give a corresponding bound for $\left|A\cap\left[\frac{n}{2}\right]\right|$ by \eqref{B_2equals}. To do this, we split $B_2$ into its `left' part $B_2^{\operatorname{L}}$ and its `right' part $B_2^{\operatorname{R}}$ defined by
\begin{align}
    B_2^{\operatorname{L}}&\vcentcolon= B_2\cap\left(\frac{n}{4},\frac{n}{3}\right]=A\cap(1+2\!\cdot\!\mathbf{N})\cap \left(\frac{n}{4},\frac{n}{3}\right] \label{B_2LRdefi}\\
    B_2^{\operatorname{R}}&\vcentcolon= B_2\cap\left(\frac{n}{3},\frac{n}{2}\right]. \nonumber
\end{align} Note that by the definition \eqref{B_2definition} of $B_2$, we have that $B_2=B_2^{\operatorname{L}}\cup B_2^{\operatorname{R}}$ and that $B_2^{\operatorname{L}}=A\cap(1+2\!\cdot\!\mathbf{N})\cap \left(\frac{n}{4},\frac{n}{3}\right]$ so $B_2^{\operatorname{L}}$ consists of odd numbers only. We obtain bounds on $B_2^{\operatorname{L}}$ and $B_2^{\operatorname{R}}$ in the following two lemmas. 
\begin{lemma}
Let $A_{(\frac{2}{3},1]}^{i(4)}$ be the set of numbers in $A_{(\frac{2}{3},1]}$ that are $i\!\!\mod 4$. Then for each of $i=1,3$ we have the inequality
\begin{align}
    \left|B_2^{\operatorname{L}}\right|+\left|A_{(\frac{2}{3},1]}^{i(4)}\right|\leqslant \frac{n}{12}+3.
\label{Coddu}
\end{align}
\label{oddsmall}
\end{lemma}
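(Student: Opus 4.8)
The plan is to apply the multiplicative counting lemma (Lemma~\ref{basicmultl}) to a dilate of $B_2^{\operatorname{L}}$, and then feed it a doubling estimate for $A_{(\frac{2}{3},1]}^{i(4)}$. Recall that $B_2^{\operatorname{L}}=A\cap(1+2\!\cdot\!\mathbf{N})\cap\left(\frac{n}{4},\frac{n}{3}\right]$ consists of odd elements of $A$ of size at most $\frac{n}{3}$, so every element of $6\!\cdot\!B_2^{\operatorname{L}}$ is an integer multiple of an element of $A\cap\left[\frac{2n}{3}\right]$, lies in $\left(\frac{3n}{2},2n\right]\subset\left(\frac{4n}{3},2n\right]$, and is congruent to $6$ modulo $12$ (since $6$ times an odd number is always $6\bmod 12$). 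Applying Lemma~\ref{basicmultl} with $k=6$, $q=12$, $a=6$, $\alpha=\frac{2}{3}$ and $I=\left(\frac{4n}{3},2n\right]$ therefore gives
$$\left|B_2^{\operatorname{L}}\right|+\left|\left(A_{(\frac{2}{3},1]}+A_{(\frac{2}{3},1]}\right)\cap I\cap(6+12\!\cdot\!\mathbf{N})\right|<\frac{|I|}{12}+1\leqslant\frac{n}{18}+2.$$

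Next I distinguish two cases according to the size of $A_{(\frac{2}{3},1]}^{i(4)}$, writing $m\leqslant\frac{n}{12}+1$ for the number of integers congruent to $i$ modulo $4$ in $\left(\frac{2n}{3},n\right]$. Suppose first that $\left|A_{(\frac{2}{3},1]}^{i(4)}\right|\geqslant\frac{m}{2}+\frac{3}{2}$. Since $A_{(\frac{2}{3},1]}^{i(4)}$ lies in an arithmetic progression of common difference $4$, which is coprime to $3$, Lemma~\ref{doublinglemmad} shows that $A_{(\frac{2}{3},1]}^{i(4)}+A_{(\frac{2}{3},1]}^{i(4)}$ contains at least $\frac{2}{3}\left|A_{(\frac{2}{3},1]}^{i(4)}\right|-1$ multiples of $3$. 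Every element of this sumset is a sum of two numbers $\equiv i\bmod 4$, hence $\equiv 2\bmod 4$ since $i$ is odd, so its multiples of $3$ are automatically $6\bmod 12$; they also lie in $\left(\frac{4n}{3},2n\right]$ and therefore belong to the second set counted in the displayed inequality. That inequality then yields $\left|B_2^{\operatorname{L}}\right|+\frac{2}{3}\left|A_{(\frac{2}{3},1]}^{i(4)}\right|<\frac{n}{18}+\frac{5}{2}$, and adding the trivial bound $\frac{1}{3}\left|A_{(\frac{2}{3},1]}^{i(4)}\right|\leqslant\frac{1}{3}\left(\frac{n}{12}+1\right)$ gives $\left|B_2^{\operatorname{L}}\right|+\left|A_{(\frac{2}{3},1]}^{i(4)}\right|<\frac{n}{12}+3$.

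Now suppose instead that $\left|A_{(\frac{2}{3},1]}^{i(4)}\right|<\frac{m}{2}+\frac{3}{2}\leqslant\frac{n}{24}+2$. Here I bound $\left|B_2^{\operatorname{L}}\right|$ directly: by the reduction made at the start of case 3.1.3 the set $A$ has no multiples of $3$ in $\left[\frac{n}{3}\right]$, so $B_2^{\operatorname{L}}$ consists of integers in $\left(\frac{n}{4},\frac{n}{3}\right]$ that are coprime to $6$, of which there are at most $\frac{n}{36}+3$. Adding the two estimates gives $\left|B_2^{\operatorname{L}}\right|+\left|A_{(\frac{2}{3},1]}^{i(4)}\right|\leqslant\frac{n}{36}+\frac{n}{24}+5=\frac{5n}{72}+5$, which is less than $\frac{n}{12}+3$ once $n$ is large enough. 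This handles both cases and both values of $i$, since the argument only used that $i$ is odd.

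The crux is the choice of dilation and modulus in the first paragraph: one needs a $k$ for which $k\!\cdot\!B_2^{\operatorname{L}}$ lands inside $\left(\frac{4n}{3},2n\right]$ (which essentially pins down $k=6$) while also lying in a single residue class modulo some $q$, and that same class must accommodate many sums from $A_{(\frac{2}{3},1]}^{i(4)}+A_{(\frac{2}{3},1]}^{i(4)}$. The choice $q=12$ succeeds precisely because $6$ times an odd number is constant modulo $12$, whereas for $i$ odd the sum of two residues $i\bmod 4$ is $2\bmod 4$, so the multiples of $3$ produced by Lemma~\ref{doublinglemmad} fall exactly into the class $6\bmod 12$. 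The remaining difficulty is that Lemma~\ref{doublinglemmad} is vacuous when $A_{(\frac{2}{3},1]}^{i(4)}$ is too small, which forces the separate counting argument above, in which it is essential that $A$ has no small multiples of $3$.
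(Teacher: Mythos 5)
Your proof is correct, but it takes a genuinely different route from the paper's, and it is worth spelling out the difference. In the first part (when $A_{(\frac{2}{3},1]}^{i(4)}$ is dense), you package the key step as an application of Lemma~\ref{doublinglemmad}, whereas the paper does the pairing-into-residue-classes-mod-$3$ argument by hand, using only \emph{non-emptiness} of both $A_{(\frac{2}{3},1]}^{i(4),1(3)}$ and $A_{(\frac{2}{3},1]}^{i(4),2(3)}$ rather than a density hypothesis. The real divergence is in the complementary case: you drop the sumset lower bound entirely and instead bound $B_2^{\operatorname{L}}$ directly, using the case-3.1.3 reduction that $A$ has no multiples of $3$ in $\left[\frac{n}{3}\right]$ so that $B_2^{\operatorname{L}}$ consists of integers coprime to $6$. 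The paper instead keeps the auxiliary set $A_{(\frac{2}{3},1]}^{i(4),0(3)}$ in play, proves a weaker but still nontrivial sumset bound $\max\left(2\left|A_{(\frac{2}{3},1]}^{i(4)}\right|-\frac{n}{18}-2,0\right)$, and finishes with a $\min$/$\max$ computation; the only property of $B_2^{\operatorname{L}}$ it uses is that its elements are odd. Your approach is shorter here, but it is strictly less robust: the paper explicitly reuses this proof verbatim for Lemma~\ref{Coddcase2}, where $B_2^{\operatorname{L}}$ is replaced by $Z_{(\frac{2}{9},\frac{1}{3}]}^{1(2)}$, a set of odd numbers in the \emph{longer} interval $\left(\frac{2n}{9},\frac{n}{3}\right]$ that may well contain multiples of $3$ and is not under the $3.1.3$ reduction — so your sparse-case bound would be both unavailable and numerically too weak there. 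Finally, a small arithmetic slip: after writing $\frac{|I|}{12}+1\leqslant\frac{n}{18}+2$ you conclude $\left|B_2^{\operatorname{L}}\right|+\frac{2}{3}\left|A_{(\frac{2}{3},1]}^{i(4)}\right|<\frac{n}{18}+\frac{5}{2}$; from the stated $+2$ this should be $+3$, which would overshoot $\frac{n}{12}+3$ by $\frac{1}{3}$. The fix is easy — $|I|\leqslant\frac{2n+2}{3}$ gives $\frac{|I|}{12}+1\leqslant\frac{n}{18}+\frac{19}{18}<\frac{n}{18}+\frac{3}{2}$, which does yield $\frac{5}{2}$ and then $\frac{n}{12}+\frac{17}{6}<\frac{n}{12}+3$ — but the display as written is inconsistent.
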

\begin{proof}
For the proof of this lemma, we use the notation $A_{(\frac{2}{3},1]}^{i(4),j(3)}\vcentcolon=A_{(\frac{2}{3},1]}^{i(4)}\cap(j+3\!\cdot\!\mathbf{N})$ for $j=0,1,2$. First assume that both of $A_{(\frac{2}{3},1]}^{i(4),1(3)},A_{(\frac{2}{3},1]}^{i(4),2(3)}$ are non-empty. Then as $i=1$ or $3$, the sumset $A_{(\frac{2}{3},1]}^{i(4)}+A_{(\frac{2}{3},1]}^{i(4)}$ contains at least
\begin{align}
    &\max\left(\left|A_{(\frac{2}{3},1]}^{i(4),1(3)}+A_{(\frac{2}{3},1]}^{i(4),2(3)}\right|,\left|A_{(\frac{2}{3},1]}^{i(4),0(3)}+A_{(\frac{2}{3},1]}^{i(4),0(3)}\right|\right)\nonumber\\
    &\geqslant\max\left(\left|A_{(\frac{2}{3},1]}^{i(4),1(3)}\right|+\left|A_{(\frac{2}{3},1]}^{i(4),2(3)}\right|,2\left|A_{(\frac{2}{3},1]}^{i(4),0(3)}\right|\right)-1\nonumber\\
    &\geqslant \frac{2\left|A_{(\frac{2}{3},1]}^{i(4)}\right|}{3}-1
\label{xaxa1}
\end{align}
numbers which are $6\!\!\mod 12$. If one of $A_{(\frac{2}{3},1]}^{i(4),1(3)},A_{(\frac{2}{3},1]}^{i(4),2(3)}$ is empty, then $\left|A_{(\frac{2}{3},1]}^{i(4),0(3)}\right|= \left|A_{(\frac{2}{3},1]}^{i(4)}\right|-\max\left(\left|A_{(\frac{2}{3},1]}^{i(4),1(3)}\right|,\left|A_{(\frac{2}{3},1]}^{i(4),2(3)}\right|\right)$. We can trivially bound $\left|A_{(\frac{2}{3},1]}^{i(4),j(3)}\right|< \frac{n}{36}+1$ as the interval $\left(\frac{2n}{3},n\right]$ contains at most this many numbers in any given residue class modulo 12. So $\left|A_{(\frac{2}{3},1]}^{i(4),0(3)}\right|\geqslant \max\left(\left|A_{(\frac{2}{3},1]}^{i(4)}\right|- \frac{n}{36}-1,0\right)$ and we obtain the lower bound \begin{equation}
\left|A_{(\frac{2}{3},1]}^{i(4),0(3)}+A_{(\frac{2}{3},1]}^{i(4),0(3)}\right|\geqslant \max\left(2\left|A_{(\frac{2}{3},1]}^{i(4),0(3)}\right|-1,0\right)\geqslant \max\left(2\left|A_{(\frac{2}{3},1]}^{i(4)}\right|- \frac{n}{18}-2,0\right).
\label{yaboy}
\end{equation}
Combining \eqref{xaxa1} and \eqref{yaboy} shows that 
\begin{equation}
    \left|\left(A_{(\frac{2}{3},1]}\!+\!A_{(\frac{2}{3},1]}\right)\cap(6\!+\!12\!\cdot\!\mathbf{N})\right|\geqslant \min\left(\frac{2\left|A_{(\frac{2}{3},1]}^{i(4)}\right|}{3}\!-\!1,\max\left(2\left|A_{(\frac{2}{3},1]}^{i(4)}\right|-\! \frac{n}{18}\!-\!2,0\right)\right).
    \label{yayoy}
\end{equation}
Now note that as $B_2^{\operatorname{L}}$ only contains odd numbers by \eqref{B_2LRdefi}, the dilated set $6\cdot B_2^{\operatorname{L}}$ consists only of numbers which are $6\!\!\mod 12$ and crucially, even though not every element of $B_2$ is necessarily a multiple of some $a\in A$, it can be seen from \eqref{B_2definition} that $6b_2$ is an integer multiple of some $a\in A\cap\left[\frac{n}{2}\right]$ for every $b_2\in B_2$. Hence, using Lemma \ref{basicmultl} with $I=\left(\frac{4n}{3},2n\right]$, $q=12$ and \eqref{yayoy} yields the inequality $$\left|B_2^{\operatorname{L}}\right|\leqslant \frac{n}{18}+1-\min\left(\frac{2\left|A_{(\frac{2}{3},1]}^{i(4)}\right|}{3}\!-\!1,\max\left(2\left|A_{(\frac{2}{3},1]}^{i(4)}\right|-\! \frac{n}{18}\!-\!2,0\right)\right).$$
So we get the desired inequality
$$\left|B_2^{\operatorname{L}}\right|+\left|A_{(\frac{2}{3},1]}^{i(4)}\right|\leqslant \frac{n}{18}\!+\!2\!+\!\max\left(\frac{\left|A_{(\frac{2}{3},1]}^{i(4)}\right|}{3},\min\left(\frac{n}{18}\!+\!1\!-\!\left|A_{(\frac{2}{3},1]}^{i(4)}\right|,\left|A_{(\frac{2}{3},1]}^{i(4)}\right|\right)\right)\leqslant\frac{n}{12}\!+\!3$$
where in the final inequality we used the trivial bound $\left|A_{(\frac{2}{3},1]}^{i(4)}\right|<\frac{n}{12}+1$ as there are at most this many numbers in $\left(\frac{2n}{3},n\right]$ which are $i\!\!\mod 4$ and also that the function $f(x) = \min\left(\frac{n}{18}-x,x\right)$ has maximum value $\frac{n}{36}$ for $x\in \mathbf{R}$.
\end{proof}

\begin{lemma}
Either the following inequality holds
\begin{align}
    2\left|B_2^{\operatorname{R}}\right|+\left|A_{(\frac{2}{3},1]}\right|\leqslant \frac{n}{3}+4,
    \label{expands4}
\end{align}
or we have that
\begin{align}
    |B_2|+\left|A_{(\frac{2}{3},1]}\right|\leqslant \frac{n}{4}+4.
    \label{noexpands4u}
\end{align}
\label{divideby4}
\end{lemma}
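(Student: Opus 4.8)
The plan is to split into two cases according to whether both of $A_{(\frac{2}{3},1]}^{1(4)}$ and $A_{(\frac{2}{3},1]}^{3(4)}$ are non-empty --- this is the same dichotomy used in the last part of the proof of Lemma \ref{lemma4}. The common ingredient in both cases is that $4\!\cdot\!B_2^{\operatorname{R}}$ is a set of multiples of $4$ contained in $\left(\frac{4n}{3},2n\right]$, each element of which is an integer multiple of some $a\in A\cap\left[\frac{n}{2}\right]\subseteq A_{[\frac{2}{3}]}$; this can be read off from the definition \eqref{B_2definition} of $B_2$, since for $b_2\in B_2^{\operatorname{R}}$ one has $4b_2\in\{12a,8a,6a,4a\}$ for the corresponding $a\in A\cap\left[\frac{n}{2}\right]$. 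Hence Lemma \ref{basicmultl} applies with $B=B_2^{\operatorname{R}}$, $k=4$, $q=4$, $\alpha=\frac{2}{3}$, $I=\left(\frac{4n}{3},2n\right]$ and the residue class of multiples of $4$; since $\left(\frac{4n}{3},2n\right]$ contains at most $\frac{n}{6}+1$ integers in any fixed residue class modulo $4$, it gives
\begin{equation*}
\left|B_2^{\operatorname{R}}\right|+\left|\left(A_{(\frac{2}{3},1]}+A_{(\frac{2}{3},1]}\right)\cap(4\!\cdot\!\mathbf{N})\cap\left(\frac{4n}{3},2n\right]\right|\leqslant \frac{n}{6}+1 .
\end{equation*}
It therefore remains, in each case, to find enough multiples of $4$ inside $\left(A_{(\frac{2}{3},1]}+A_{(\frac{2}{3},1]}\right)\cap\left(\frac{4n}{3},2n\right]$.

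Suppose first that both $A_{(\frac{2}{3},1]}^{1(4)}$ and $A_{(\frac{2}{3},1]}^{3(4)}$ are non-empty. Exactly as in the proof of Lemma \ref{lemma4}, the bounds $\left|A_{(\frac{2}{3},1]}^{1(4)}+A_{(\frac{2}{3},1]}^{3(4)}\right|\geqslant\left|A_{(\frac{2}{3},1]}^{1(4)}\right|+\left|A_{(\frac{2}{3},1]}^{3(4)}\right|-1$ and $\left|A_{(\frac{2}{3},1]}^{j(4)}+A_{(\frac{2}{3},1]}^{j(4)}\right|\geqslant 2\left|A_{(\frac{2}{3},1]}^{j(4)}\right|-1$ for $j=0,2$, together with the fact that $1+3$, $0+0$ and $2+2$ are all divisible by $4$, show that $A_{(\frac{2}{3},1]}+A_{(\frac{2}{3},1]}$ contains at least $\frac{\left|A_{(\frac{2}{3},1]}\right|}{2}-1$ multiples of $4$ in $\left(\frac{4n}{3},2n\right]$. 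Substituting this into the displayed inequality yields $\left|B_2^{\operatorname{R}}\right|+\frac{\left|A_{(\frac{2}{3},1]}\right|}{2}\leqslant\frac{n}{6}+2$, and doubling gives precisely \eqref{expands4}.

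Now suppose instead that one of $A_{(\frac{2}{3},1]}^{1(4)},A_{(\frac{2}{3},1]}^{3(4)}$ is empty; say $A_{(\frac{2}{3},1]}^{i_0(4)}=\emptyset$ with $i_0\in\{1,3\}$, and let $i^*$ denote the other element of $\{1,3\}$. Then the set $E$ of even numbers in $A_{(\frac{2}{3},1]}$ has $|E|=\left|A_{(\frac{2}{3},1]}\right|-\left|A_{(\frac{2}{3},1]}^{i^*(4)}\right|$, and $2\!\cdot\!E\subseteq A_{(\frac{2}{3},1]}+A_{(\frac{2}{3},1]}$ is a set of $|E|$ distinct multiples of $4$ lying in $\left(\frac{4n}{3},2n\right]$. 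Feeding this into the displayed inequality gives $\left|B_2^{\operatorname{R}}\right|+\left|A_{(\frac{2}{3},1]}\right|-\left|A_{(\frac{2}{3},1]}^{i^*(4)}\right|\leqslant\frac{n}{6}+1$. On the other hand, Lemma \ref{oddsmall} applied with $i=i^*$ gives $\left|B_2^{\operatorname{L}}\right|+\left|A_{(\frac{2}{3},1]}^{i^*(4)}\right|\leqslant\frac{n}{12}+3$. Adding these two inequalities, the terms $\pm\left|A_{(\frac{2}{3},1]}^{i^*(4)}\right|$ cancel, and since $B_2=B_2^{\operatorname{L}}\cup B_2^{\operatorname{R}}$ is a disjoint union, we obtain $|B_2|+\left|A_{(\frac{2}{3},1]}\right|\leqslant\frac{n}{6}+\frac{n}{12}+4=\frac{n}{4}+4$, which is \eqref{noexpands4u}.

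The argument is essentially bookkeeping, so I do not anticipate a genuine obstacle. The mildly delicate points are: choosing the same dichotomy as in Lemma \ref{lemma4}; recognising that Lemma \ref{oddsmall} was tailored precisely to absorb the contribution of $B_2^{\operatorname{L}}$ in the second case, which is what makes the additive constants come out to exactly the stated $\frac{n}{3}+4$ and $\frac{n}{4}+4$; and keeping the elementary modulo-$4$ counting estimates on $\left(\frac{4n}{3},2n\right]$ tight enough to preserve these constants.
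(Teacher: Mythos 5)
Your proof is correct and follows essentially the same approach as the paper: you use the same case split on whether one of $A_{(\frac{2}{3},1]}^{1(4)},A_{(\frac{2}{3},1]}^{3(4)}$ is empty, the same application of Lemma \ref{basicmultl} with $k=q=4$ and $I=\left(\frac{4n}{3},2n\right]$ in the non-empty case (giving \eqref{expands4}), and the same invocation of Lemma \ref{oddsmall} in the empty case (giving \eqref{noexpands4u}). The only cosmetic difference is in the empty case: the paper bounds $\left|B_2^{\operatorname{R}}\right|+\left|A_{(\frac{2}{3},1]}\cap 2\mathbf{N}\right|$ by directly observing that $2\cdot B_2^{\operatorname{R}}$ and $A_{(\frac{2}{3},1]}\cap 2\mathbf{N}$ are disjoint sets of even numbers in $\left(\frac{2n}{3},n\right]$, whereas you obtain the identical bound by noting that $2\cdot E\subset A_{(\frac{2}{3},1]}+A_{(\frac{2}{3},1]}$ supplies $|E|$ multiples of $4$ for the same \mbox{Lemma \ref{basicmultl}} application — two equivalent ways of expressing property P on the same sets.
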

\begin{proof}
Recall that $A_{(\frac{2}{3},1]}^{i(4)}$ is the set of numbers in $A_{(\frac{2}{3},1]}$ that are $i\!\!\mod 4$. First assume that either $A_{(\frac{2}{3},1]}^{1(4)}$ or $A_{(\frac{2}{3},1]}^{3(4)}$ is empty. Then by \eqref{Coddu} we obtain $\left|B_2^{\operatorname{L}}\right|+\big|A_{(\frac{2}{3},1]}\cap (1+2\!\cdot\!\mathbf{N})\big|\leqslant \left|B_2^{\operatorname{L}}\right|+\max\left(\left|A_{(\frac{2}{3},1]}^{1(4)}\right|,\left|A_{(\frac{2}{3},1]}^{3(4)}\right|\right)\leqslant \frac{n}{12} +3$. We also have that $\left|B_2^{\operatorname{R}}\right|+\big|A_{(\frac{2}{3},1]}\cap (2\!\cdot\!\mathbf{N})\big|\leqslant  \frac{n}{6}+1$ since $2\!\cdot\!B_2^{\operatorname{R}}$ and $A_{(\frac{2}{3},1]}\cap (2\!\cdot\!\mathbf{N})$ are disjoint sets of even numbers in $\left(\frac{2n}{3},n\right]$ as $A$ has property P, see Lemma \ref{basicInteger}. Combining these inequalities and using that $|B_2|=\left|B_2^{\operatorname{L}}\right|+\left|B_2^{\operatorname{R}}\right|$ by definition \eqref{B_2LRdefi}, we obtain the desired inequality \eqref{noexpands4u} as follows: $|B_2|+\left|A_{(\frac{2}{3},1]}\right|\leqslant \left|B_2^{\operatorname{L}}\right|+\left|A_{(\frac{2}{3},1]}\cap (1+2\!\cdot\!\mathbf{N})\right|+\left|B_2^{\operatorname{R}}\right|+\left|A_{(\frac{2}{3},1]}\cap (2\!\cdot\!\mathbf{N})\right|\leqslant \frac{n}{4}+4$.

Now assume that neither $A_{(\frac{2}{3},1]}^{1(4)}$ nor $A_{(\frac{2}{3},1]}^{3(4)}$ is empty. Then $\left|A_{(\frac{2}{3},1]}^{1(4)}+A_{(\frac{2}{3},1]}^{3(4)}\right|\geqslant \left|A_{(\frac{2}{3},1]}^{1(4)}\right|+\left|A_{(\frac{2}{3},1]}^{3(4)}\right|-1$, and we also have $\left|A_{(\frac{2}{3},1]}^{j(4)}+A_{(\frac{2}{3},1]}^{j(4)}\right|\geqslant 2\left|A_{(\frac{2}{3},1]}^{j(4)}\right|-1$ for $j=0,2$. Hence, $A_{(\frac{2}{3},1]}+A_{(\frac{2}{3},1]}$ contains at least $$\max\left(\left|A_{(\frac{2}{3},1]}^{1(4)}\right|+\left|A_{(\frac{2}{3},1]}^{3(4)}\right|,2\left|A_{(\frac{2}{3},1]}^{0(4)}\right|,2\left|A_{(\frac{2}{3},1]}^{2(4)}\right|\right)-1\geqslant \frac{\left|A_{(\frac{2}{3},1]}\right|}{2}-1$$ multiples of 4 in $\left(\frac{4n}{3},2n\right]$. Note that the dilated set $4\cdot B_2^{\operatorname{R}}$ also consists only of multiples of 4 in $\left(\frac{4n}{3},2n\right]$ so Lemma \ref{basicmultl} yields the inequality $\left|B_2^{\operatorname{R}}\right|+\frac{\left|A_{(\frac{2}{3},1]}\right|}{2}-1\leqslant \frac{n}{6}+1$ which is inequality \eqref{expands4}
\end{proof}
Having obtained an upper bound on the auxiliary set $B_2$ defined by \eqref{B_2definition}, let us now continue with the main argument of case 3.1.3. Lemma \ref{divideby4} gave us two possible conclusions. First suppose that \eqref{noexpands4u} holds, then using \eqref{B_2equals} and \eqref{noexpands4u} yields
\begin{align*}
    |A|&=\left|A_{[\frac{1}{2}]}\right|+\left|A_{(\frac{1}{2},\frac{2}{3}]}\right|+\left|A_{(\frac{2}{3},1]}\right|\\
    &= |B_2|+\left|A_{(\frac{1}{2},\frac{2}{3}]}\right|+\left|A_{(\frac{2}{3},1]}\right|\\
    &\leqslant \frac{n}{4}+4+\left|A_{(\frac{1}{2},\frac{2}{3}]}\right|.
\end{align*}
So if \eqref{noexpands4u} holds then we may suppose that \begin{equation}
\left|A_{(\frac{1}{2},\frac{2}{3}]}\right|\geqslant\frac{n}{12}-4
\label{A_2case1}
\end{equation} or else the desired bound \eqref{inductiontheo} holds by the above so we are done. If \eqref{noexpands4u} does not hold, then by Lemma \ref{divideby4} we may assume that \eqref{expands4} holds and we will again deduce a lower bound on $\left|A_{(\frac{1}{2},\frac{2}{3}]}\right|$. From \eqref{expands4} we get that \begin{align}
\left|A\setminus A_{(\frac{1}{2},\frac{2}{3}]}\right|&=\left|A_{(\frac{2}{3},1]}\right|+\left|A_{[\frac{n}{2}]}\right|\nonumber\\
&=\left|A_{(\frac{2}{3},1]}\right|+|B_2|\nonumber\\
&=\left|A_{(\frac{2}{3},1]}\right|+\left|B_2^{\operatorname{R}}\right|+\left|B_2^{\operatorname{L}}\right|\nonumber\\
&\leqslant  \frac{\left|A_{(\frac{2}{3},1]}\right|}{2}+\frac{n}{6}+2+\left|B_2^{\operatorname{L}}\right|\nonumber\\
&\leqslant \frac{7n}{36}+3+\frac{\left|A_{(\frac{2}{3},1]}\right|}{2} 
\label{AminusB}
\end{align} using the bound $\left|B_2^{\operatorname{L}}\right|\leqslant \frac{n}{36}+1$ which holds as by definition \eqref{B_2LRdefi}, $B_2^{\operatorname{L}}=A\cap(1+2\!\cdot\!\mathbf{N})\cap \left(\frac{n}{4},\frac{n}{3}\right]$ is a subset of $\left(\frac{n}{4},\frac{n}{3}\right]$ consisting of odd numbers not divisible by $3$ only, as $A$ contains no multiples of $3$ in $\left[\frac{n}{3}\right]$ (which we showed in the first paragraph of case 3.1.3). So we can without loss of generality assume that 
\begin{equation}
    \left|A_{(\frac{1}{2},\frac{2}{3}]}\right|\geqslant  \frac{5n}{36}-\frac{\left|A_{(\frac{2}{3},1]}\right|}{2} -3,
    \label{Bbound}
\end{equation} or else we could conclude the desired bound \eqref{inductiontheo} from the inequality \eqref{AminusB}. 
Combining \eqref{A_2case1} and \eqref{Bbound} shows that whichever of the two possibilities in Lemma \ref{divideby4} holds, we always get that
\begin{equation}
    \left|A_{(\frac{1}{2},\frac{2}{3}]}\right|\geqslant\min\left(\frac{n}{12}-4,\frac{5n}{36}-\frac{\left|A_{(\frac{2}{3},1]}\right|}{2}-3\right).
\label{A_2T}
\end{equation}
Recall that $\frac{\left|A_{(\frac{2}{3},1]}\right|}{2}+1\geqslant\left|A_{(\frac{1}{2},\frac{2}{3}]}\right|$ by \eqref{A_2lowe} and using this in \eqref{A_2T} shows that
$\left|A_{(\frac{2}{3},1]}\right|\geqslant \min\left(\frac{n}{6}-10,\frac{5n}{36}-4\right)= \frac{5n}{36}-4$.
Combining this with \eqref{A_2T} gives \begin{align}\left|A_{(\frac{1}{2},1]}\right|&=\left|A_{(\frac{2}{3},1]}\right|+\left|A_{(\frac{1}{2},\frac{2}{3}]}\right|\nonumber\\
&\geqslant \min\left(\frac{n}{12}-4+\left|A_{(\frac{2}{3},1]}\right|,\frac{5n}{36}+\frac{\left|A_{(\frac{2}{3},1]}\right|}{2}-3\right)\label{minineq}\\
&\geqslant \min\left(\frac{2n}{9}-8,\frac{5n}{24}-5\right)= \frac{5n}{24}-5\nonumber.
\end{align}
In particular, the arithmetic progression $Q\subset A_{(\frac{1}{2},1]}^{1(3)}+A_{(\frac{1}{2},1]}^{2(3)}$ that we obtained from conclusion (2) in Theorem \ref{freimain} has size at least 
\begin{equation}
    |Q|\geqslant \left|A_{(\frac{1}{2},1]}^{1(3)}\right|+\left|A_{(\frac{1}{2},1]}^{2(3)}\right|-1\geqslant \frac{2\left|A_{(\frac{1}{2},1]}\right|}{3}-1\geqslant \frac{5n}{36}-5
\label{3.1.3Q}
\end{equation} by the assumption of Subcase 3.1 and the above lower bound on $\left|A_{(\frac{1}{2},1]}\right|$.

\medskip

We can now repeat the argument from the first paragraph of case 3.1.3 with this improved bound $|Q|\geqslant\frac{5n}{36}-5$ to deduce that since $Q\subset A_{(\frac{1}{2},1]}+A_{(\frac{1}{2},1]}$ is an arithmetic progression with common difference $3$ consisting of multiples of $3$, it contains an integer multiple of any multiple of $3$ in $\left[\frac{5n}{12}-15\right]$. Hence, as $A$ has property P, $A$ contains no multiples of $3$ in $\left[\frac{5n}{12}-15\right]$.
We are now ready to finish the argument. We conclude by considering the location of the interval $I\vcentcolon= \frac{1}{3}\!\cdot\!Q$, and note that $I \vcentcolon= [i_m,i_M] \subset \frac{1}{3}\cdot\left(A_{(\frac{1}{2},1]}^{1(3)}+A_{(\frac{1}{2},1]}^{2(3)}\right)\subset\left(\frac{n}{3},\frac{2n}{3}\right]$. Note further that $I\subset A'''$ by the definition \eqref{A''definition} of $A'''$ and we showed in the paragraph preceding \eqref{dyadicembed} that $A'''$ is disjoint from $B_1$. Hence, $I$ is also disjoint from $B_1$. Also recall that we defined $B_1=\left\{2^{j_a}a:a\in A\cap\left[\frac{2n}{3}\right]\right\}$ in \eqref{dyadicembedding} so that $B_1$ does not contain any multiples of $3$ in $\left[\frac{5n}{12}-15\right]$ as we just proved that neither does $A$. Finally let $\varepsilon>0$ be a small positive number to be determined later.
\medskip

\begin{enumerate}
    \item[(i)] If $i_m>\frac{7n}{18}+\varepsilon n$, then $B_1\cap\left(\frac{7n}{18}+\varepsilon n,\frac{2n}{3}\right]$ and $I=\frac{1}{3}\!\cdot\!Q$ are disjoint subsets of $\left(\frac{7n}{18}+\varepsilon n,\frac{2n}{3}\right]$ so by using the second inequality in \eqref{3.1.3Q}, we get $$\left|B_1\cap\left(\frac{7n}{18}+\varepsilon n,\frac{2n}{3}\right]\right|\leqslant \left\lceil \frac{5n}{18}-\varepsilon n\right\rceil-|I|\leqslant  \frac{5n}{18}-\varepsilon n-\frac{2\left|A_{(\frac{1}{2},1]}\right|}{3}+2.$$ Further note that $\left|B_1\cap\left(\frac{n}{3},\frac{7n}{18}+\varepsilon n\right]\right|\leqslant \left\lceil\frac{n}{27}+\frac{2\varepsilon n}{3}\right\rceil$ as $B_1$ cannot contain any multiples of $3$ in $\left[\frac{5n}{12}-15\right]\supset \left[\frac{7n}{18}+\varepsilon n\right]$, provided we choose $\varepsilon$ small enough. So in total we get
    \begin{align*}
|B_1|&\leqslant  \frac{5n}{18}-\varepsilon n-\frac{2\left|A_{(\frac{1}{2},1]}\right|}{3}+2+\left\lceil\frac{n}{27}+\frac{2\varepsilon n}{3}\right\rceil\\
&\leqslant\frac{17n}{54} -\frac{\varepsilon n}{3}-\frac{2\left|A_{(\frac{1}{2},1]}\right|}{3}+3\\&\leqslant \frac{17n}{54}-\frac{\varepsilon n}{3}-\min\left(\frac{n}{18}+\frac{2\left|A_{(\frac{2}{3},1]}\right|}{3},\frac{5n}{54}+\frac{\left|A_{(\frac{2}{3},1]}\right|}{3}\right)+6
    \end{align*} by using \eqref{minineq} for the final inequality. Using \eqref{Adecompo} and plugging in this bound on $|B_1|$, we get in total that \begin{align*}
        |A|&= \left|A_{(\frac{2}{3},1]}\right|+|B_1|\\
        &\leqslant \frac{17n}{54}-\frac{\varepsilon n}{3}+6+\max\left(\frac{\left|A_{(\frac{2}{3},1]}\right|}{3}-\frac{n}{18},\frac{2\left|A_{(\frac{2}{3},1]}\right|}{3}-\frac{5n}{54}\right)\\&\leqslant\frac{17n}{54}-\frac{\varepsilon n}{3}+6+\max\bigg(8,\frac{n}{54}+16\bigg)= \frac{n}{3}-\frac{\varepsilon n}{3}+22, \end{align*} where we used the assumption that $\left|A_{(\frac{2}{3},1]}\right|\leqslant\frac{n}{6}+24$ in Case 3. This gives the desired bound \eqref{inductiontheo} if we choose $\varepsilon > 15\delta$.
    \medskip
    
    \item[(ii)] If we are not in case (i), then $i_m\leqslant \frac{7n}{18}+\varepsilon n$. Using \eqref{3.1.3Q}, we see that \begin{equation}
     i_M\geqslant i_m+|I|-1=i_m+|Q|-1\geqslant i_m+\frac{2\left|A_{(\frac{1}{2},1]}\right|}{3}-2.
     \label{i_M}
    \end{equation} We have shown in the paragraph following \eqref{3.1.3Q} that $B_1$ is disjoint from $I$ and that $B_1\cap\left[\frac{5n}{12}-15\right]$ contains no multiples of $3$. Further note that $\left(B_1\setminus A_{(\frac{1}{2},\frac{2}{3}]}\right)\cap\left(\frac{n}{2},\frac{2n}{3}\right]$ consists only of even numbers not divisible by $3$ as by definition \eqref{dyadicembedding}, every number in $\left(B_1\setminus A_{(\frac{1}{2},\frac{2}{3}]}\right)\cap\left(\frac{n}{2},\frac{2n}{3}\right]$ is of the form $2^{j_a}a$ for some $a\in A\cap\left[\frac{n}{3}\right]$ so $j_a\geqslant 1$ and by the first paragraph of case 3.1.3, $A\cap\left[\frac{n}{3}\right]$ contains no multiples of $3$ so $3\nmid 2^{j_a}a$. Hence, if $i_M\leqslant\frac{n}{2}$, then 
    \begin{align*}
        \left|B_1\setminus A_{(\frac{1}{2},\frac{2}{3}]}\right|&\leqslant \left|\left(\frac{n}{3},\frac{n}{2}\right]\setminus I\right|+\left|\left\{x\in \left(\frac{n}{2},\frac{2n}{3}\right]: x \text{ is even and $3\nmid x$}\right\}\right|\\
        &\leqslant \frac{n}{6}+1-|I|+\frac{n}{18}+1\\
        &\leqslant \frac{2n}{9}+3-\frac{2\left|A_{(\frac{1}{2},1]}\right|}{3},
    \end{align*}
using that $|I|=|Q|$ and \eqref{3.1.3Q}.
Hence by starting with \eqref{Adecompo}, in this case we obtain the desired bound
\begin{align*}
|A|&=\left|B_1\setminus A_{(\frac{1}{2},\frac{2}{3}]}\right|+\left|A_{(\frac{1}{2},1]}\right|\\
&\leqslant \frac{2n}{9}+3+\frac{\left|A_{(\frac{1}{2},1]}\right|}{3}\\
&\leqslant \frac{2n}{9}+4+\frac{\left|A_{(\frac{2}{3},1]}\right|}{2}\leqslant \frac{11n}{36}+16,
\end{align*} using that $\left|A_{(\frac{1}{2},1]}\right|\leqslant \frac{3\left|A_{(\frac{2}{3},1]}\right|}{2}+1$ by \eqref{A_2lowe} and that $\left|A_{(\frac{2}{3},1]}\right|\leqslant \frac{n}{6}+24$ in Case 3. Finally, we may assume that $i_M>\frac{n}{2}$ and that $i_m\leqslant \frac{7n}{18}+\varepsilon n$. Recall from the discussion at the beginning of (ii) that $B_1$ is disjoint from $I=[i_m,i_M]$, that $B_1$ contains no multiples of $3$ in $\left[\frac{5n}{12}-15\right]\supset[i_m]$ and that $\left(B_1\setminus A_{(\frac{1}{2},\frac{2}{3}]}\right)\cap\left(\frac{n}{2},\frac{2n}{3}\right]$ consists only of even numbers not divisible by $3$, so we get
\begin{align*}
    \left|B_1\setminus A_{(\frac{1}{2},\frac{2}{3}]}\right|&\leqslant \left|B_1\cap\left(\frac{n}{3},i_m\right]\right|+\left|\left\{x\in\left(i_M,\frac{2n}{3}\right]:x \text{ is even and $3\nmid x$}\right\}\right|\\
    &\leqslant \left\lceil\frac{2}{3}\left(i_m-\frac{n}{3}\right)\right\rceil+ \left\lceil\frac{2n}{9}-\frac{i_M}{3}\right\rceil\\
    &\leqslant \frac{2i_m}{3}-\frac{i_M}{3}+2\\
    &\leqslant \frac{i_m}{3}-\frac{2\left|A_{(\frac{1}{2},1]}\right|}{9}+4,
\end{align*}
by plugging in the lower bound \eqref{i_M} on $i_M$. So by \eqref{Adecompo} and as $i_m\leqslant \frac{7n}{18}+ \varepsilon n$ by the assumption of (ii), we get in total that \begin{align*}
|A|&=\left|A_{(\frac{1}{2},1]}\right|+\left|B_1\setminus A_{(\frac{1}{2},\frac{2}{3}]}\right|\\
&\leqslant \left|A_{(\frac{1}{2},1]}\right|+\frac{i_m}{3}-\frac{2\left|A_{(\frac{1}{2},1]}\right|}{9}+4\\
&\leqslant \frac{7\left|A_{(\frac{1}{2},1]}\right|}{9}+\frac{7n}{54}+\frac{\varepsilon n}{3}+4\\
&\leqslant \frac{7\left|A_{(\frac{2}{3},1]}\right|}{6}+\frac{7n}{54}+\frac{\varepsilon n}{3}+5\\
&\leqslant \frac{35n}{108}+\frac{\varepsilon n}{3}+23
\end{align*} using for the penultimate inequality that $\left|A_{(\frac{1}{2},1]}\right|\leqslant\frac{3\left|A_{(\frac{2}{3},1]}\right|}{2}+1$ by \eqref{A_2lowe} and for the final inequality that $\left|A_{(\frac{2}{3},1]}\right|\leqslant \frac{n}{6}+24$ in Case 3. This gives the desired bound \eqref{inductiontheo} if we choose $\varepsilon$ small enough.
\end{enumerate}
This finishes the proof of 3.1.3. \qed \\
\bigskip

Hence, we have proved the desired bound \eqref{inductiontheo} in each of the three cases 3.1.1, 3.1.2 and 3.1.3 and this finishes the proof of Subcase 3.1 under the assumption that conclusion (2) in Theorem \ref{freimain} holds, namely that $A_{(\frac{1}{2},1]}^{1(3)}+A_{(\frac{1}{2},1]}^{2(3)}$ contains an arithmetic progression $Q$ of size $\left|A_{(\frac{1}{2},1]}^{1(3)}\right|+\left|A_{(\frac{1}{2},1]}^{2(3)}\right|-1$.
\bigskip

To finish the proof of Subcase 3.1, the final case that we need to consider is the following. By the remaining conclusion (1) from Theorem \ref{freimain}, we may assume that 
\begin{equation}
    \left|A_{(\frac{1}{2},1]}^{1(3)}+A_{(\frac{1}{2},1]}^{2(3)}\right|\geqslant \left|A_{(\frac{1}{2},1]}^{1(3)}\right|+\left|A_{(\frac{1}{2},1]}^{2(3)}\right|+\min\left(\left|A_{(\frac{1}{2},1]}^{1(3)}\right|,\left|A_{(\frac{1}{2},1]}^{2(3)}\right|\right)-3.
\label{freimaindouble}
\end{equation}
In this final part of the argument, we shall make use of yet another construction which from the numbers in $A\cap\left[\frac{2n}{3}\right]$ produces a subset $Z$ of $\left(\frac{n}{3},\frac{2n}{3}\right]$.\footnote{Following our earlier notation it might be more natural to call this constructed set $B_3$ instead of $Z$, but this would make later use of subscripts confusing.} For every number $a\in A\cap[\frac{n}{2}]$, there is a unique power of $3$, say $3^{m_a}$, so that $3^{m_a}a \in \left(\frac{n}{6},\frac{n}{2}\right]$. Note that there are no coincidences of the form $3^{m_a}a=3^{m_b}b$ with $b\neq a$ by (2) in Lemma \ref{basicInteger}. We define \begin{equation}
Z\vcentcolon = \left\{3^{m_a}a: \text{$3^{m_a}a\in \left(\frac{2n}{9},\frac{n}{2}\right]$}\right\}\cup\left\{2\cdot3^{m_a}a: \text{$3^{m_a}a\in \left(\frac{n}{6},\frac{2n}{9}\right]$}\right\}\subset \left(\frac{2n}{9},\frac{n}{2}\right] 
\label{Zdefi}
\end{equation}Again there are no coincidences as else we would get an equality of the form $2\cdot3^{m_a}a = 3^{m_b}b$ but if $m_a\geqslant m_b$ then $b>a$ and $a|b$, while if $m_a<m_b$ then $a>b$ and $b|a+a$ so both of these lead to a contradiction as $A$ has property P. So we have constructed a set $Z\subset \left(\frac{2n}{9},\frac{n}{2}\right]$ of size
\begin{equation}
    |Z|=\left|A\cap\left[\frac{n}{2}\right]\right|.
    \label{Zsize}
\end{equation} Let us split $Z$ into the following parts which by definition form a partition of $Z$:
\begin{align}
Z_{(\frac{2}{9},\frac{1}{3}]}^{1(2)} &\vcentcolon= Z\cap\left(\frac{2n}{9},\frac{n}{3}\right]\cap\left(1+2\!\cdot\!\mathbf{N}\right), \label{Zpart}\\
Z_B &\vcentcolon= \bigg\{z\in Z\cap\left(\frac{2n}{9},\frac{n}{3}\right]\cap(2\!\cdot\!\mathbf{N}): \text{$\frac{3z}{2}\in Z$}.\bigg\},\nonumber\\
Z_G &\vcentcolon= \bigg\{z\in Z\cap\left(\frac{2n}{9},\frac{n}{3}\right]\cap(2\!\cdot\!\mathbf{N}): \text{$\frac{3z}{2}\notin Z$}.\bigg\}\nonumber\\
Z_{(\frac{1}{3},\frac{1}{2}]} &\vcentcolon= Z\cap\left(\frac{n}{3},\frac{n}{2}\right].\nonumber
\end{align} 
Note that $Z_{(\frac{2}{9},\frac{1}{3}]}^{1(2)}$ by definition consists of odd numbers only. In \eqref{Zpart}, we split $Z_{(\frac{2}{9},\frac{1}{3}]}^{0(2)}=Z\cap\left(\frac{2n}{9},\frac{n}{3}\right]\cap(2\!\cdot\!\mathbf{N})$ into the two sets $Z_B$ and $Z_G$. We think of $Z_B$ as the set of `bad' numbers in $\left(\frac{2n}{9},\frac{n}{3}\right]$ as we cannot further replace such a number $z\in Z_B$ by $\frac{3z}{2}$ since this number is already in $Z$. $Z_G$ on the other hand is the set of `good' numbers in $\left(\frac{2n}{9},\frac{n}{3}\right]$. To make use of these `good' numbers, we define \begin{equation}
    Z_{(\frac{1}{3},\frac{1}{2}],G}\vcentcolon= \left(\frac{3}{2}\!\cdot\!Z_G\right)\cup Z_{(\frac{1}{3},\frac{1}{2}]}\subset \left(\frac{n}{3},\frac{n}{2}\right],
    \label{Z_RGdefi}
\end{equation}
so $Z_{(\frac{1}{3},\frac{1}{2}],G}$ is a set of integers as $Z_G$ contains only even numbers, and \begin{equation}
    \left|Z_{(\frac{1}{3},\frac{1}{2}],G}\right|=\left|Z_{(\frac{1}{3},\frac{1}{2}]}\right|+\left|Z_G\right|
    \label{Z_RG}
\end{equation}
as the sets $Z_{(\frac{1}{3},\frac{1}{2}]}$ and $\frac{3}{2}\!\cdot\!Z_G$ are disjoint by definition of $Z_G$. We shall need two lemmas which are very similar to Lemmas \ref{oddsmall} and \ref{divideby4}.
\begin{lemma}
Let $A_{(\frac{2}{3},1]}^{i(4)}$ be the set of numbers in $A_{(\frac{2}{3},1]}$ that are $i\!\!\mod 4$. Then
\begin{align}
    \left|Z_{(\frac{2}{9},\frac{1}{3}]}^{1(2)}\right|+\left|A_{(\frac{2}{3},1]}^{1(4)}\right|\leqslant \frac{n}{12}+3, \nonumber \\
    \left|Z_{(\frac{2}{9},\frac{1}{3}]}^{1(2)}\right|+\left|A_{(\frac{2}{3},1]}^{3(4)}\right|\leqslant \frac{n}{12}+3,\nonumber
\end{align}
\label{Coddcase2}
\end{lemma}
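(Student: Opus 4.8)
The proof is a direct adaptation of that of Lemma~\ref{oddsmall}, with the set $B_2^{\operatorname{L}}$ replaced throughout by $Z_{(\frac{2}{9},\frac{1}{3}]}^{1(2)}$; as there, it suffices to handle $i=1$ and $i=3$ simultaneously, the only relevant feature of $i$ being that $2i\equiv 2\pmod 4$. First observe that $Z_{(\frac{2}{9},\frac{1}{3}]}^{1(2)}$ consists of odd integers lying in $\left(\frac{2n}{9},\frac{n}{3}\right]$, and that by the construction \eqref{Zdefi} every element of $Z$ is an integer multiple of some $a\in A\cap\left[\frac{n}{2}\right]$. Hence, for each $z\in Z_{(\frac{2}{9},\frac{1}{3}]}^{1(2)}$ the number $6z$ is again an integer multiple of some $a\in A\cap\left[\frac{n}{2}\right]$, and since $z$ is odd and lies in $\left(\frac{2n}{9},\frac{n}{3}\right]$ we have $6\cdot Z_{(\frac{2}{9},\frac{1}{3}]}^{1(2)}\subset\left(\frac{4n}{3},2n\right]\cap(6+12\cdot\mathbf{N})$.

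We now apply Lemma~\ref{basicmultl} with $B=Z_{(\frac{2}{9},\frac{1}{3}]}^{1(2)}$, $k=6$, $q=12$, $a=6$, $\alpha=\frac{1}{2}$ and $I=\left(\frac{4n}{3},2n\right]$. Since $A_{(\frac{2}{3},1]}+A_{(\frac{2}{3},1]}\subset\left(\frac{4n}{3},2n\right]=I$ and $A_{(\frac{2}{3},1]}\subseteq A_{(\frac{1}{2},1]}$, this yields
\begin{equation*}
\left|Z_{(\frac{2}{9},\frac{1}{3}]}^{1(2)}\right|+\left|\left(A_{(\frac{2}{3},1]}+A_{(\frac{2}{3},1]}\right)\cap(6+12\cdot\mathbf{N})\right|<\frac{n}{18}+1.
\end{equation*}
It remains to bound the number of elements of $A_{(\frac{2}{3},1]}+A_{(\frac{2}{3},1]}$ congruent to $6\pmod{12}$ from below in terms of $\left|A_{(\frac{2}{3},1]}^{i(4)}\right|$, and this is exactly what is done in the proof of Lemma~\ref{oddsmall}: splitting $A_{(\frac{2}{3},1]}^{i(4)}$ into residue classes modulo $3$, applying $|X+Y|\geqslant|X|+|Y|-1$ to $A_{(\frac{2}{3},1]}^{i(4),1(3)}+A_{(\frac{2}{3},1]}^{i(4),2(3)}$ and to $A_{(\frac{2}{3},1]}^{i(4),0(3)}+A_{(\frac{2}{3},1]}^{i(4),0(3)}$ (both consisting of numbers that are $6\pmod{12}$ because $2i\equiv 2\pmod 4$), and using the trivial bound $\left|A_{(\frac{2}{3},1]}^{i(4),j(3)}\right|<\frac{n}{36}+1$, one obtains precisely the lower bound \eqref{yayoy}. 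Feeding this into the displayed inequality and simplifying exactly as at the end of the proof of Lemma~\ref{oddsmall} — using $\left|A_{(\frac{2}{3},1]}^{i(4)}\right|<\frac{n}{12}+1$ and that $x\mapsto\min\left(\frac{n}{18}-x,x\right)$ never exceeds $\frac{n}{36}$ — gives $\left|Z_{(\frac{2}{9},\frac{1}{3}]}^{1(2)}\right|+\left|A_{(\frac{2}{3},1]}^{i(4)}\right|\leqslant\frac{n}{12}+3$ for both $i=1$ and $i=3$, as desired.

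The only points where the argument is not literally identical to Lemma~\ref{oddsmall} are: (i) the verification that $6\cdot Z_{(\frac{2}{9},\frac{1}{3}]}^{1(2)}$ consists of integer multiples of elements of $A\cap\left[\frac{n}{2}\right]$, which here is immediate from the definition \eqref{Zdefi} of $Z$ since every element of $Z$ is itself such a multiple; and (ii) the fact that Lemma~\ref{basicmultl} is phrased in terms of $A_{(\frac{1}{2},1]}+A_{(\frac{1}{2},1]}$ rather than $A_{(\frac{2}{3},1]}+A_{(\frac{2}{3},1]}$, which is harmless because $A_{(\frac{2}{3},1]}\subseteq A_{(\frac{1}{2},1]}$, so the count of elements that are $6\pmod{12}$ in $A_{(\frac{2}{3},1]}+A_{(\frac{2}{3},1]}$ also bounds the corresponding count for $A_{(\frac{1}{2},1]}+A_{(\frac{1}{2},1]}$ from below. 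I do not anticipate any genuine difficulty, as every ingredient — the elementary sumset inequality, the bookkeeping modulo $12$, and Lemma~\ref{basicmultl} — appears already in the established proof of Lemma~\ref{oddsmall}.
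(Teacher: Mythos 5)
Your proof is correct and follows exactly the approach the paper intends: the paper's own proof of Lemma~\ref{Coddcase2} consists of the single sentence that the argument is completely analogous to that of Lemma~\ref{oddsmall} with $B_2^{\operatorname{L}}$ replaced by $Z_{(\frac{2}{9},\frac{1}{3}]}^{1(2)}$ throughout, which is precisely the substitution you carried out. The two verification points you flag — that $6\cdot Z_{(\frac{2}{9},\frac{1}{3}]}^{1(2)}$ consists of multiples of elements of $A\cap[\frac{n}{2}]$, and that passing from $A_{(\frac{1}{2},1]}+A_{(\frac{1}{2},1]}$ to $A_{(\frac{2}{3},1]}+A_{(\frac{2}{3},1]}$ is monotone — are indeed the only places where the substitution requires a new sentence, and your treatment of both is correct.
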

\begin{proof}
The proof is completely analogous to the proof of Lemma \ref{oddsmall} if we replace $B_2^{\operatorname{L}}$ by $Z_{(\frac{2}{9},\frac{1}{3}]}^{1(2)}$ throughout. 
\end{proof}
\begin{lemma}
One of the following three statements holds.
\begin{itemize}
\item [(1)] We have that
\begin{align*}
    \left|Z_{(\frac{2}{9},\frac{1}{3}]}^{1(2)}\right|+\left|Z_{(\frac{1}{3},\frac{1}{2}],G}\right|+\left|A_{(\frac{2}{3},1]}\right|\leqslant \frac{n}{4}+4.
\end{align*}
    \item [(2)] We have that \begin{align*}
    &\left|Z_{(\frac{1}{3},\frac{1}{2}],G}\right|
    \leqslant  \frac{n}{6}+5\\
    &\,\,\,\,\,\,\,\,\,\,\,\,\,\,\,\,\,\,\,\,\,\,\,\,-\max\left(\left|A_{(\frac{2}{3},1]}^{1(4)}\right|\!+\!\left|A_{(\frac{2}{3},1]}^{3(4)}\right|+\min\left(\left|A_{(\frac{2}{3},1]}^{1(4)}\right|,\left|A_{(\frac{2}{3},1]}^{3(4)}\right|\right),3\left|A_{(\frac{2}{3},1]}^{0(4)}\right|,3\left|A_{(\frac{2}{3},1]}^{2(4)}\right|\right).
    \end{align*}
\item [(3)] We have that \begin{align*}
    \left|Z_{(\frac{1}{3},\frac{1}{2}],G}\right|\leqslant \frac{n}{6}+2-\max\left(\left|A_{(\frac{2}{3},1]}^{1(4)}\right|+\left|A_{(\frac{2}{3},1]}^{3(4)}\right|,2\left|A_{(\frac{2}{3},1]}^{0(4)}\right|,2\left|A_{(\frac{2}{3},1]}^{2(4)}\right|\right)
    \end{align*} and that $\left(A_{(\frac{2}{3},1]}+A_{(\frac{2}{3},1]}\right)\cap (4\!\cdot\!\mathbf{N})$ contains an arithmetic progression of size at least $\frac{\left|A_{(\frac{2}{3},1]}\right|}{2}-1$.
\end{itemize}
\label{lemmaZ}
\end{lemma}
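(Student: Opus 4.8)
The plan is to follow the template of the proofs of Lemmas \ref{oddsmall} and \ref{divideby4}. First I would record the compatibility facts that make Lemma \ref{basicmultl} applicable: inspecting the constructions \eqref{Zdefi} and \eqref{Z_RGdefi}, every element of $Z$ is an integer multiple of some element of $A\cap\left[\frac{n}{2}\right]$, and hence so is every element of $4\cdot Z_{(\frac{1}{3},\frac{1}{2}],G}$ (note that $4\cdot\frac{3z}{2}=6z$ with $z\in Z$), of $2\cdot Z_{(\frac{1}{3},\frac{1}{2}],G}$, and of $6\cdot Z_{(\frac{2}{9},\frac{1}{3}]}^{1(2)}$; moreover $4\cdot Z_{(\frac{1}{3},\frac{1}{2}],G}$ is a set of multiples of $4$ inside $\left(\frac{4n}{3},2n\right]$, $2\cdot Z_{(\frac{1}{3},\frac{1}{2}],G}$ is a set of even numbers inside $\left(\frac{2n}{3},n\right]$, and $6\cdot Z_{(\frac{2}{9},\frac{1}{3}]}^{1(2)}$ consists of numbers congruent to $6$ modulo $12$ inside $\left(\frac{4n}{3},2n\right]$. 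In particular, Lemma \ref{basicmultl} applies to $B=Z_{(\frac{1}{3},\frac{1}{2}],G}$ with $k=q=4$, $\alpha=\frac{1}{2}$ and $I=\left(\frac{4n}{3},2n\right]$; by Lemma \ref{basicInteger}(1) the set $2\cdot Z_{(\frac{1}{3},\frac{1}{2}],G}$ is disjoint from $A_{(\frac{2}{3},1]}\cap(2\cdot\mathbf{N})$; and the (already proved) Lemma \ref{Coddcase2} bounds $\left|Z_{(\frac{2}{9},\frac{1}{3}]}^{1(2)}\right|$.

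I would then split into cases exactly as in Lemma \ref{divideby4}. If one of $A_{(\frac{2}{3},1]}^{1(4)},A_{(\frac{2}{3},1]}^{3(4)}$ is empty, then $\left|A_{(\frac{2}{3},1]}\cap(1+2\cdot\mathbf{N})\right|\leqslant\max\!\left(\left|A_{(\frac{2}{3},1]}^{1(4)}\right|,\left|A_{(\frac{2}{3},1]}^{3(4)}\right|\right)$, so Lemma \ref{Coddcase2} gives $\left|Z_{(\frac{2}{9},\frac{1}{3}]}^{1(2)}\right|+\left|A_{(\frac{2}{3},1]}\cap(1+2\cdot\mathbf{N})\right|\leqslant\frac{n}{12}+3$, while the disjointness of $2\cdot Z_{(\frac{1}{3},\frac{1}{2}],G}$ and $A_{(\frac{2}{3},1]}\cap(2\cdot\mathbf{N})$, both sets of even numbers in $\left(\frac{2n}{3},n\right]$, gives $\left|Z_{(\frac{1}{3},\frac{1}{2}],G}\right|+\left|A_{(\frac{2}{3},1]}\cap(2\cdot\mathbf{N})\right|\leqslant\frac{n}{6}+1$; adding these two inequalities yields statement (1).

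Now suppose neither $A_{(\frac{2}{3},1]}^{1(4)}$ nor $A_{(\frac{2}{3},1]}^{3(4)}$ is empty, and write $x,y,u,v$ for the sizes of $A_{(\frac{2}{3},1]}^{1(4)},A_{(\frac{2}{3},1]}^{3(4)},A_{(\frac{2}{3},1]}^{0(4)},A_{(\frac{2}{3},1]}^{2(4)}$. I would apply Theorem \ref{freimain} to each non-empty sumset among $A_{(\frac{2}{3},1]}^{1(4)}+A_{(\frac{2}{3},1]}^{3(4)}$, $A_{(\frac{2}{3},1]}^{0(4)}+A_{(\frac{2}{3},1]}^{0(4)}$ and $A_{(\frac{2}{3},1]}^{2(4)}+A_{(\frac{2}{3},1]}^{2(4)}$; each of these consists of multiples of $4$ contained in $A_{(\frac{2}{3},1]}+A_{(\frac{2}{3},1]}\subset\left(\frac{4n}{3},2n\right]$, conclusion (1) of Theorem \ref{freimain} produces (respectively) at least $x+y+\min(x,y)-3$, $3u-3$, or $3v-3$ multiples of $4$, and conclusion (2) produces an arithmetic progression inside the sumset of length (respectively) $x+y-1$, $2u-1$, or $2v-1$. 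If conclusion (1) holds for every non-empty one of the three, the largest of these counts combined with Lemma \ref{basicmultl} gives statement (2) (with room to spare in the additive constant). Otherwise some non-empty sumset satisfies conclusion (2); using the elementary inequality $\max(x+y,2u,2v)\geqslant\frac{1}{2}(x+y+u+v)=\frac{1}{2}\left|A_{(\frac{2}{3},1]}\right|$ together with the fact that each of $x+y+\min(x,y),3u,3v$ is at most $\frac{3}{2}$ times the corresponding one of $x+y,2u,2v$, I would check that one can always pick the relevant sumset so that either the expansion estimates still combine to give statement (2), or one of the forced progressions has length at least $\frac{1}{2}\left|A_{(\frac{2}{3},1]}\right|-1$ while $\left(A_{(\frac{2}{3},1]}+A_{(\frac{2}{3},1]}\right)\cap(4\cdot\mathbf{N})$ still contains enough multiples of $4$ for Lemma \ref{basicmultl} to give the size bound in statement (3).

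I expect the final step of this last case to be the main obstacle: since the residue-class sumset with the largest doubling need not be the one forced to contain the longest arithmetic progression, one has to do a short but somewhat fiddly case analysis — splitting on which of $x+y,2u,2v$ is largest and on which conclusion of Theorem \ref{freimain} each of the three sumsets satisfies — to see that one of statements (2),(3) always applies. The generous additive constants ($+5$ in (2) and $+2$ in (3)) are there precisely to absorb the $O(1)$ losses incurred in these comparisons.
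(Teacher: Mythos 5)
Your sketch tracks the paper's proof of this lemma: the case where one of $A_{(\frac{2}{3},1]}^{1(4)},A_{(\frac{2}{3},1]}^{3(4)}$ is empty yields (1) by adding the Lemma \ref{Coddcase2} bound $\left|Z_{(\frac{2}{9},\frac{1}{3}]}^{1(2)}\right|+\max\bigl(\left|A_{(\frac{2}{3},1]}^{1(4)}\right|,\left|A_{(\frac{2}{3},1]}^{3(4)}\right|\bigr)\leqslant\frac{n}{12}+3$ to the disjointness bound $\left|Z_{(\frac{1}{3},\frac{1}{2}],G}\right|+\left|A_{(\frac{2}{3},1]}\cap(2\cdot\mathbf{N})\right|\leqslant\frac{n}{6}+1$, and the applications of Lemma \ref{basicmultl} with $4\cdot Z_{(\frac{1}{3},\frac{1}{2}],G}$ are set up correctly; this much matches the paper.

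However, the step you flag as the main obstacle is exactly where your sketch stops being a proof, and the difficulty is not merely fiddly. Write $x,y,u,v$ for $\left|A_{(\frac{2}{3},1]}^{1(4)}\right|,\left|A_{(\frac{2}{3},1]}^{3(4)}\right|,\left|A_{(\frac{2}{3},1]}^{0(4)}\right|,\left|A_{(\frac{2}{3},1]}^{2(4)}\right|$. Your dichotomy (all three sumsets get conclusion (1) of Theorem \ref{freimain}, or some sumset gets conclusion (2)) does not close: if the sumset realising $\max(x+y,2u,2v)$ satisfies (1) while the sumset realising $\max(x+y+\min(x,y),3u,3v)$ satisfies (2), then no single invocation of Theorem \ref{freimain} gives either the count required for statement (2) or a progression long enough for statement (3). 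Concretely, take $x=6,y=1,u=v=3$, so $\max(x+y+\min(x,y),3u,3v)=9$ and $\frac{\left|A_{(\frac{2}{3},1]}\right|}{2}-1=5.5$; if $A_{(\frac{2}{3},1]}^{1(4)}+A_{(\frac{2}{3},1]}^{3(4)}$ satisfies (1) it contributes only $8-3=5$ multiples of $4$, while (2) for $A_{(\frac{2}{3},1]}^{0(4)}+A_{(\frac{2}{3},1]}^{0(4)}$ gives only a progression of length $5$. The missing observation is that $A_{(\frac{2}{3},1]}^{0(4)}+A_{(\frac{2}{3},1]}^{0(4)}$ lies in $0\bmod 8$ while the other two candidate sumsets lie in $4\bmod 8$, so their contributions to $\left(A_{(\frac{2}{3},1]}+A_{(\frac{2}{3},1]}\right)\cap(4\cdot\mathbf{N})$ can be \emph{added}: in the example, $5+(2u-1)=10\geqslant 9-3$, rescuing statement (2). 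Alternatively, one may weaken (2) to assert only a lower bound $M$ on the count of multiples of $4$ satisfying $M+\max(x,y)\geqslant x+y+u+v$, which is what Corollary \ref{cor1} actually uses and which follows directly by applying Theorem \ref{freimain} solely to the sumset realising $\max(x+y,2u,2v)$. The paper's own exposition of this lemma is compressed and leaves this to the reader, but the disjointness (or the weakening) is precisely the ingredient your sketch omits; the heuristics you list ($\max(x+y,2u,2v)\geqslant\frac{1}{2}\left|A_{(\frac{2}{3},1]}\right|$ and the factor-$\frac{3}{2}$ comparison) do not by themselves resolve it.
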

\begin{proof}
The proof is analogous to the proof of Lemma \ref{divideby4}.
First assume that either $A_{(\frac{2}{3},1]}^{1(4)}$ or $A_{(\frac{2}{3},1]}^{3(4)}$ is empty. Then by Lemma \ref{Coddcase2} we obtain \begin{align}
    \left|Z_{(\frac{2}{9},\frac{1}{3}]}^{1(2)}\right|+\left|A_{(\frac{2}{3},1]}\cap (1+2\!\cdot\!\mathbf{N})\right|&\leqslant \left|Z_{(\frac{2}{9},\frac{1}{3}]}^{1(2)}\right|+\max\left(\left|A_{(\frac{2}{3},1]}^{1(4)}\right|,\left|A_{(\frac{2}{3},1]}^{3(4)}\right|\right)
    \label{rara1}\\
    &\leqslant \frac{n}{12} +3.
    \nonumber
\end{align} 
We also have that
\begin{align}
    \left|Z_{(\frac{1}{3},\frac{1}{2}],G}\right|+\left|A_{(\frac{2}{3},1]}\cap (2\!\cdot\!\mathbf{N})\right|\leqslant \frac{n}{6}+1
    \label{rara2}
\end{align} since $2\!\cdot\!Z_{(\frac{1}{3},\frac{1}{2}],G}$ and $A_{(\frac{2}{3},1]}\cap (2\!\cdot\!\mathbf{N})$ are disjoint sets of even numbers in $\left(\frac{2n}{3},n\right]$, as otherwise a number in $2\!\cdot\!Z_{(\frac{1}{3},\frac{1}{2}],G}$ would divide a number in $A_{(\frac{2}{3},1]}$ contradicting that $A$ has property P by the definitions \eqref{Zpart} and \eqref{Z_RGdefi} of $Z_{(\frac{1}{3},\frac{1}{2}],G}$. So we obtain the desired inequality in conclusion (1) in Lemma \ref{lemmaZ} by adding the bounds \eqref{rara1} and \eqref{rara2}: \begin{align*}
    &\left|Z_{(\frac{2}{9},\frac{1}{3}]}^{1(2)}\right|+\big|A_{(\frac{2}{3},1]}\cap (1+2\!\cdot\!\mathbf{N})\big|+\left|Z_{(\frac{1}{3},\frac{1}{2}],G}\right|+\big|A_{(\frac{2}{3},1]}\cap (2\!\cdot\!\mathbf{N})\big|
    \leqslant \frac{n}{4}+4.
\end{align*}

Now assume that neither $A_{(\frac{2}{3},1]}^{1(4)}$ nor $A_{(\frac{2}{3},1]}^{3(4)}$ is empty. Then $\left|A_{(\frac{2}{3},1]}^{1(4)}+A_{(\frac{2}{3},1]}^{3(4)}\right|\geqslant \left|A_{(\frac{2}{3},1]}^{1(4)}\right|+\left|A_{(\frac{2}{3},1]}^{3(4)}\right|-1$, and we also have $\left|A_{(\frac{2}{3},1]}^{j(4)}+A_{(\frac{2}{3},1]}^{j(4)}\right|\geqslant 2\left|A_{(\frac{2}{3},1]}^{j(4)}\right|-1$ for $j=0,2$. Hence, $A_{(\frac{2}{3},1]}+A_{(\frac{2}{3},1]}$ contains at least 
\begin{align}
    \left|\left(A_{(\frac{2}{3},1]}+A_{(\frac{2}{3},1]}\right)\cap(4\!\cdot\!\mathbf{N})\right|&\geqslant\max\left(\left|A_{(\frac{2}{3},1]}^{1(4)}\right|+\left|A_{(\frac{2}{3},1]}^{3(4)}\right|,2\left|A_{(\frac{2}{3},1]}^{0(4)}\right|,2\left|A_{(\frac{2}{3},1]}^{2(4)}\right|\right)-1
    \label{rara3}
\end{align} multiples of 4 in $\left(\frac{4n}{3},2n\right]$. Note that the dilated set $4\cdot Z_{(\frac{1}{3},\frac{1}{2}],G}$ also consists only of multiples of 4 in $\left(\frac{4n}{3},2n\right]$ so using Lemma \ref{basicmultl} with \eqref{rara3} yields the inequality \begin{align}
\left|Z_{(\frac{1}{3},\frac{1}{2}],G}\right|&\leqslant \frac{n}{6}+1 -\left|\left(A_{(\frac{2}{3},1]}+A_{(\frac{2}{3},1]}\right)\cap(4\!\cdot\!\mathbf{N})\right|\nonumber\\
&\leqslant \frac{n}{6}+2-\max\left(\left|A_{(\frac{2}{3},1]}^{1(4)}\right|+\left|A_{(\frac{2}{3},1]}^{3(4)}\right|,2\left|A_{(\frac{2}{3},1]}^{0(4)}\right|,2\left|A_{(\frac{2}{3},1]}^{2(4)}\right|\right) 
\label{rara4}
\end{align}
which is the desired inequality from (3) in this lemma. In fact, we can improve on this using Theorem \ref{freimain} to either obtain a larger sumset if (1) in Theorem \ref{freimain} holds so that
\begin{align*}
    &\left|\left(A_{(\frac{2}{3},1]}+A_{(\frac{2}{3},1]}\right)\cap(4\!\cdot\!\mathbf{N})\right|\\
    &\geqslant\max\left(\left|A_{(\frac{2}{3},1]}^{1(4)}\right|+\left|A_{(\frac{2}{3},1]}^{3(4)}\right|+\min\left(\left|A_{(\frac{2}{3},1]}^{1(4)}\right|,\left|A_{(\frac{2}{3},1]}^{3(4)}\right|\right),3\left|A_{(\frac{2}{3},1]}^{0(4)}\right|,3\left|A_{(\frac{2}{3},1]}^{2(4)}\right|\right)-3,
\end{align*} which gives the inequality from part (2) in this lemma by using this instead of \eqref{rara3} in \eqref{rara4}, or else the inequality \eqref{rara4} still holds and we deduce from conclusion (2) in Theorem \ref{freimain} that $\left(A_{(\frac{2}{3},1]}+A_{(\frac{2}{3},1]}\right)\cap(4\!\cdot\!\mathbf{N})$ contains an arithmetic progression of size at least $$\max\left(\left|A_{(\frac{2}{3},1]}^{1(4)}\right|+\left|A_{(\frac{2}{3},1]}^{3(4)}\right|,2\left|A_{(\frac{2}{3},1]}^{0(4)}\right|,2\left|A_{(\frac{2}{3},1]}^{2(4)}\right|\right)-1\geqslant\frac{\left|A_{(\frac{2}{3},1]}\right|}{2} -1.$$ Hence, in this case part (3) in this lemma holds and this finishes the proof of Lemma \ref{lemmaZ}.
\end{proof}
The following corollary will be important.
\begin{cor}
If either of $(1)$ or $(2)$ in Lemma \ref{lemmaZ} holds, then we may assume that
\begin{align}
    \left|A_{(\frac{1}{2},\frac{2}{3}]}\right|+|Z_B|\geqslant \frac{n}{12}-8.
\label{cor11}
\end{align}
If $(3)$ in  Lemma \ref{lemmaZ} holds, then we may assume that
\begin{align}
    \left|A_{(\frac{1}{2},\frac{2}{3}]}\right|+|Z_B|\geqslant \frac{n}{24}-11.
\label{cor120}
\end{align}
\label{cor1}
\end{cor}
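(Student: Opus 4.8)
The plan is to combine a telescoping count of $|A|$ in terms of the set $Z$ with the three alternatives of Lemma \ref{lemmaZ} and the estimate of Lemma \ref{Coddcase2}. I would first record the identity
$$|A|=\left|A_{(\frac{1}{2},\frac{2}{3}]}\right|+|Z_B|+T,\qquad T\vcentcolon=\left|A_{(\frac{2}{3},1]}\right|+\left|Z_{(\frac{2}{9},\frac{1}{3}]}^{1(2)}\right|+\left|Z_{(\frac{1}{3},\frac{1}{2}],G}\right|,$$
which follows by splitting $[n]$ as $\left[\frac{n}{2}\right]\cup\left(\frac{n}{2},n\right]$, using $\left|A_{[\frac{1}{2}]}\right|=|Z|$ from \eqref{Zsize}, and partitioning $Z$ via \eqref{Zpart} together with \eqref{Z_RG}. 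As usual we may assume the desired bound \eqref{inductiontheo} fails, otherwise there is nothing to prove; then $|A|>\frac{n}{3}$, and hence $\left|A_{(\frac{1}{2},\frac{2}{3}]}\right|+|Z_B|=|A|-T>\frac{n}{3}-T$. So it suffices to show $T\leqslant\frac{n}{4}+8$ whenever (1) or (2) of Lemma \ref{lemmaZ} holds (which yields \eqref{cor11}), and $T\leqslant\frac{7n}{24}+11$ whenever (3) of Lemma \ref{lemmaZ} holds (which yields \eqref{cor120}).

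To bound $T$, write $a_i=\left|A_{(\frac{2}{3},1]}^{i(4)}\right|$ so that $S\vcentcolon=\left|A_{(\frac{2}{3},1]}\right|=a_0+a_1+a_2+a_3$; since everything in sight is symmetric in the indices $1$ and $3$, I may assume $a_1\leqslant a_3$, and then Lemma \ref{Coddcase2} with $i=3$ gives $\left|Z_{(\frac{2}{9},\frac{1}{3}]}^{1(2)}\right|\leqslant\frac{n}{12}+3-a_3$. Alternative (1) of Lemma \ref{lemmaZ} is immediate, since it literally states $T\leqslant\frac{n}{4}+4$. For alternative (2), set $M_2\vcentcolon=\max(2a_1+a_3,3a_0,3a_2)$, which is exactly the maximum appearing there once $a_1\leqslant a_3$; the crux is the elementary inequality $a_0+a_1+a_2\leqslant M_2$, which I would prove by checking the three cases for which term attains the maximum — the only mildly delicate one being $M_2=2a_1+a_3$, where $a_0,a_2\leqslant\frac{2a_1+a_3}{3}$ and the last step uses $a_1\leqslant a_3$. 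Adding this to the Lemma \ref{Coddcase2} bound and alternative (2) collapses $T$ to $\left(a_0+a_1+a_2-M_2\right)+\frac{n}{4}+8\leqslant\frac{n}{4}+8$.

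The substantive case is alternative (3), where one only has $\left|Z_{(\frac{1}{3},\frac{1}{2}],G}\right|\leqslant\frac{n}{6}+2-M_3$ with $M_3\vcentcolon=\max(a_1+a_3,2a_0,2a_2)$. Here I would note that $2M_3\geqslant 2a_0+2a_2$ forces $M_3\geqslant a_0+a_2$, while trivially $M_3\geqslant a_1+a_3$, so $S-M_3\leqslant\min(a_0+a_2,a_1+a_3)=:\mu$; combining with the Lemma \ref{Coddcase2} bound gives $T\leqslant(\mu-a_3)+\frac{n}{4}+5$. The main obstacle is then the inequality $\mu-a_3\leqslant\frac{S}{4}$, which I would establish by splitting on which term is the minimum: if $\mu=a_1+a_3$ then $\mu-a_3=a_1$ and $S\geqslant 2(a_1+a_3)\geqslant 4a_1$; if $\mu=a_0+a_2$ then either $\mu-a_3\leqslant 0$, or, writing $t=a_0+a_2-a_3>0$, one has $t\leqslant a_1\leqslant a_3$ (from $a_0+a_2\leqslant a_1+a_3$) and hence $S=(a_3+t)+(a_1+a_3)\geqslant 4t$. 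Since $S=\left|A_{(\frac{2}{3},1]}\right|<\frac{n}{6}+24$ in Case 3, this yields $T\leqslant\frac{S}{4}+\frac{n}{4}+5<\frac{7n}{24}+11$, as required. I expect the two elementary inequalities $a_0+a_1+a_2\leqslant M_2$ and $\mu-a_3\leqslant\frac{S}{4}$ to be the only genuinely fiddly points; everything else is bookkeeping with the identities and estimates already in place.
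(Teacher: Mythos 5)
Your proof is correct and follows essentially the same route as the paper's: you start from the same identity (the paper writes it as $|A|-\left|A_{(\frac{1}{2},\frac{2}{3}]}\right|-|Z_B|=\left|A_{(\frac{2}{3},1]}\right|+\left|Z_{(\frac{2}{9},\frac{1}{3}]}^{1(2)}\right|+\left|Z_{(\frac{1}{3},\frac{1}{2}],G}\right|$), plug in the three alternatives of Lemma \ref{lemmaZ} together with Lemma \ref{Coddcase2}, and conclude via "either $|A|\leqslant\frac{n}{3}$ and we are done, or the stated lower bound holds". The only cosmetic difference is that the paper compresses the case analysis of your two elementary inequalities $a_0+a_1+a_2\leqslant M_2$ and $\mu-a_3\leqslant\frac{S}{4}$ into the single-line facts $\max(x+y+\min(x,y),3z,3w)+\max(x,y)\geqslant x+y+z+w$ and $\max(x+y,2z,2w)+\max(x,y)\geqslant\frac{3(x+y+z+w)}{4}$, stated without proof; your explicit verification of them is a valid (and arguably clearer) way to supply the missing details.
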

\begin{proof}
Recall that $|Z|=\left|A\cap\left[\frac{n}{2}\right]\right|$ by \eqref{Zsize}, so we get
\begin{align}
|A| -\left|A_{(\frac{1}{2},\frac{2}{3}]}\right|-|Z_B| 
&= \left|A_{(\frac{2}{3},1]}\right|+\left|A_{[\frac{1}{2}]}\right|-|Z_B| \nonumber\\
&= \left|A_{(\frac{2}{3},1]}\right|+|Z|-|Z_B|\nonumber\\
&=\left|A_{(\frac{2}{3},1]}\right|+\left|Z_{(\frac{2}{9},\frac{1}{3}]}^{1(2)}\right|+\left|Z_{(\frac{1}{3},\frac{1}{2}],G}\right|
\label{bababa1}
\end{align}
where the last equality follows as $|Z|=\left|Z_{(\frac{2}{9},\frac{1}{3}]}^{1(2)}\right|+\left|Z_{(\frac{1}{3},\frac{1}{2}],G}\right|+|Z_B|$ by \eqref{Zpart} and \eqref{Z_RG}.
If the inequality from case (1) in Lemma \ref{lemmaZ} holds, then using this in \eqref{bababa1} gives $|A|-\left|A_{(\frac{1}{2},\frac{2}{3}]}\right|-|Z_B|\leqslant \frac{n}{4} +4$ so we either get \eqref{cor11} or else that $|A|\leqslant \frac{n}{3}$ and we are done.
If (2) in Lemma \ref{lemmaZ} holds, we can use this upper bound for $\left|Z_{(\frac{1}{3},\frac{1}{2}],G}\right|$ together with the bound on $\left|Z_{(\frac{2}{9},\frac{1}{3}]}^{1(2)}\right|$ from Lemma \ref{Coddcase2} in \eqref{bababa1} to obtain
\begin{align*}
    &|A| -\left|A_{(\frac{1}{2},\frac{2}{3}]}\right|-|Z_B| = \left|A_{(\frac{2}{3},1]}\right|+\left|Z_{(\frac{2}{9},\frac{1}{3}]}^{1(2)}\right|+\left|Z_{(\frac{1}{3},\frac{1}{2}],G}\right|\\
    &\leqslant \left|A_{(\frac{2}{3},1]}\right|+ \frac{n}{6}+5\\
    &\,\,\,\,\,\,-\max\left(\left|A_{(\frac{2}{3},1]}^{1(3)}\right|+\left|A_{(\frac{2}{3},1]}^{3(4)}\right|+\min\left(\left|A_{(\frac{2}{3},1]}^{1(3)}\right|,\left|A_{(\frac{2}{3},1]}^{3(4)}\right|\right),3\left|A_{(\frac{2}{3},1]}^{0(3)}\right|,3\left|A_{(\frac{2}{3},1]}^{2(3)}\right|\right)\\
    &\,\,\,\,\,\,+\frac{n}{12}+3-\max\bigg(\left|A_{(\frac{2}{3},1]}^{1(3)}\right|,\left|A_{(\frac{2}{3},1]}^{3(4)}\right|\bigg) \\
    &\leqslant \left|A_{(\frac{2}{3},1]}\right|+\frac{n}{4}+8-\left|A_{(\frac{2}{3},1]}\right|= \frac{n}{4}+8,
\end{align*} using the basic inequality $\max(x\!+\!y\!+\!\min(x,y),3z,3w)\!+\!\max(x,y)\geqslant x\!+\!y\!+\!z\!+\!w$. Hence, we may assume that \eqref{cor11} holds as else $|A|\leqslant \frac{n}{3}$ so we would be done. Finally, if case (3) in Lemma \ref{lemmaZ} holds, we can use this upper bound for $\left|Z_{(\frac{1}{3},\frac{1}{2}],G}\right|$ together with the bound on $\left|Z_{(\frac{2}{9},\frac{1}{3}]}^{1(2)}\right|$ from Lemma \ref{Coddcase2} in \eqref{bababa1} so that
\begin{align*}
    &|A| -\left|A_{(\frac{1}{2},\frac{2}{3}]}\right|-|Z_B| = \left|A_{(\frac{2}{3},1]}\right|+\left|Z_{(\frac{2}{9},\frac{1}{3}]}^{1(2)}\right|+\left|Z_{(\frac{1}{3},\frac{1}{2}],G}\right|\\
    &\leqslant \left|A_{(\frac{2}{3},1]}\right|+\frac{n}{6}+2-\max\left(\left|A_{(\frac{2}{3},1]}^{1(3)}\right|+\left|A_{(\frac{2}{3},1]}^{3(4)}\right|,2\left|A_{(\frac{2}{3},1]}^{0(3)}\right|,2\left|A_{(\frac{2}{3},1]}^{2(3)}\right|\right)\\
    &\,\,\,\,\,\,+\frac{n}{12}+3-\max\left(\left|A_{(\frac{2}{3},1]}^{1(3)}\right|,\left|A_{(\frac{2}{3},1]}^{3(4)}\right|\right) \\
    &\leqslant \left|A_{(\frac{2}{3},1]}\right|+\frac{n}{4}+5-\frac{3\left|A_{(\frac{2}{3},1]}\right|}{4}\leqslant \frac{7n}{24}+11,
\end{align*}
using that $\max(x+y,2z,2w)+\max(x,y)\geqslant \frac{3(x+y+z+w)}{4}$ and in last inequality that $\left|A_{(\frac{2}{3},1]}\right|<\frac{n}{6}+24$ as we are in Case 3. So either $|A|\leqslant \frac{n}{3}$ and we are done or we may assume that \eqref{cor120} holds.

\end{proof}
Recall that the starting point of our proof in Subcase 3.1 was to note that the set $A'''$, which we defined in \eqref{A''definition} as $A'''=\frac{1}{3}\cdot\left(\left(A_{(\frac{1}{2},1]}+A_{(\frac{1}{2},1]}\right)\cap 3\cdot\mathbf{N}\right)$, and the auxiliary set $B_1$ are disjoint subsets of $\left(\frac{n}{3},\frac{2n}{3}\right]$ because $A$ has property P. This idea directly led to the crucial inequality \eqref{dyadicembed} and it was also used with various other auxiliary sets in the proofs of Cases 3.1.1, 3.1.2 and 3.1.3. Here, for the final time, we find one more way of mapping all of $A\cap \left[\frac{2n}{3}\right]$ into $\left(\frac{n}{3},\frac{2n}{3}\right]$ which then leads to the construction of an auxiliary set $B_3\subset \left(\frac{n}{3},\frac{2n}{3}\right]$ that will be disjoint from the set $A'''$. We do this as follows by making use of the partition of $Z$ into $Z_{(\frac{2}{9},\frac{1}{3}]}^{1(2)},Z_B,Z_G$ and $Z_{(\frac{1}{3},\frac{1}{2}]}$ that we defined in \eqref{Zpart}. Define
\begin{align}
    B_3\vcentcolon= \bigg(2\!\cdot\!\left(Z_{(\frac{2}{9},\frac{1}{3}]}^{1(2)}\cup Z_B\cup Z_G\right)\bigg)\cup Z_{(\frac{1}{3},\frac{1}{2}]}\cup\bigg(\frac{9}{4}\!\cdot\!Z_B\bigg)\cup A_{(\frac{1}{2},\frac{2}{3}]}.
\label{B_3defi}
\end{align}
We need check a couple of crucial properties of $B_3$ which we collect in the following lemma.
\begin{lemma}
We have that $B_3$ is a set of integers contained in $\left(\frac{n}{3},\frac{2n}{3}\right]$, that $B_3$ has size $|B_3|= \left|A_{[\frac{2}{3}]}\right|+|Z_B|$, and crucially that $B_3$ is disjoint from the set $A'''$ that we defined in \eqref{A''definition}.
\label{Ycrucial}
\end{lemma}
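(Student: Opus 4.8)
The plan is to verify the three asserted properties of $B_3$ in turn, the first two being essentially bookkeeping and the third being the real content. First I would establish that $B_3 \subset \left(\frac{n}{3},\frac{2n}{3}\right]$ and that $B_3$ consists of integers, by going through the four pieces in the definition \eqref{B_3defi}. The set $2\cdot\left(Z_{(\frac{2}{9},\frac{1}{3}]}^{1(2)}\cup Z_B\cup Z_G\right)$ is obtained by doubling the part of $Z$ lying in $\left(\frac{2n}{9},\frac{n}{3}\right]$ (these three sets partition $Z\cap\left(\frac{2n}{9},\frac{n}{3}\right]$ by \eqref{Zpart}), so it lands in $\left(\frac{4n}{9},\frac{2n}{3}\right]\subset\left(\frac{n}{3},\frac{2n}{3}\right]$; the piece $Z_{(\frac{1}{3},\frac{1}{2}]}$ is already in $\left(\frac{n}{3},\frac{n}{2}\right]$ by definition; the piece $\frac{9}{4}\cdot Z_B$ consists of integers since $Z_B$ consists of even numbers $z$ with $\frac{3z}{2}\in Z$, and in fact $\frac{3z}{2}\in Z\cap\left(\frac{n}{3},\frac{n}{2}\right]$ forces $\frac{3z}{2}$ to be even (as $Z\cap\left(\frac{2n}{9},\frac{n}{3}\right]$ contains the odd elements $Z_{(\frac{2}{9},\frac1 3]}^{1(2)}$ but here $\frac{3z}{2}>\frac{n}{3}$) — wait, more carefully: I need $4\mid 9z$, i.e. $4\mid z$ is not guaranteed, so instead I should argue $z\in Z_B$ means $\frac{3z}{2}\in Z$ and $z$ even, and the relevant claim is only that $\frac{9z}{4}$ is an integer, which holds iff $4\mid 9z$ iff $4\mid z$; the cleaner route is to note $\frac{9z}{4}=\frac{3}{2}\cdot\frac{3z}{2}$ and $\frac{3z}{2}\in Z\cap\left(\frac n3,\frac n2\right]$, and elements of $Z$ in that range of the form $\frac{3z}{2}$ with $z\in(2\cdot\mathbf N)$ are even, so $\frac32\cdot\frac{3z}{2}$ is an integer — I would spell this out from the construction \eqref{Zdefi}–\eqref{Zpart}. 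Finally $A_{(\frac{1}{2},\frac{2}{3}]}\subset\left(\frac n2,\frac{2n}{3}\right]$. Combining, $B_3\subset\left(\frac n3,\frac{2n}{3}\right]$.

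Next I would compute $|B_3|$. Writing $B_3$ as the disjoint union of the four displayed pieces, the doubling map is injective on $Z$, so $\left|2\cdot\left(Z_{(\frac{2}{9},\frac{1}{3}]}^{1(2)}\cup Z_B\cup Z_G\right)\right|=\left|Z_{(\frac{2}{9},\frac{1}{3}]}^{1(2)}\right|+|Z_B|+|Z_G|$; together with $\left|Z_{(\frac13,\frac12]}\right|$ and $\left|\frac94\cdot Z_B\right|=|Z_B|$ and $\left|A_{(\frac12,\frac23]}\right|$, and recalling $|Z|=\left|Z_{(\frac29,\frac13]}^{1(2)}\right|+|Z_B|+|Z_G|+\left|Z_{(\frac13,\frac12]}\right|$ from \eqref{Zpart}, this sums to $|Z|+|Z_B|+\left|A_{(\frac12,\frac23]}\right|$. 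Using $|Z|=\left|A\cap\left[\frac n2\right]\right|$ from \eqref{Zsize} and $\left|A\cap\left[\frac n2\right]\right|+\left|A_{(\frac12,\frac23]}\right|=\left|A_{[\frac23]}\right|$ gives $|B_3|=\left|A_{[\frac23]}\right|+|Z_B|$, provided I check the four pieces are genuinely pairwise disjoint: the first three pieces live in $\left(\frac49 n,\frac23 n\right]\cup\left(\frac n3,\frac n2\right]\cup\left(\frac n3,\frac n2\right]\cup\left(\frac n2,\frac23 n\right]$ — there could be overlap between $2\cdot Z_{(\frac29,\frac13]}^{1(2)}\cup\cdots$ and the others, so I would argue disjointness from the residue/parity structure and from property $P$ via Lemma \ref{basicInteger}, exactly as in the verification of \eqref{Zsize} and \eqref{Z_RG}. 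Coincidences between $2z$ and $\frac94 z'$, or between $2z$ and an element of $A_{(\frac12,\frac23]}$, or $\frac94 z'$ and an element of $A_{(\frac12,\frac23]}$, all reduce (after unwinding $z=3^{m_a}a$ etc.) to an instance of $x\mid y+y$ with $x<y$ in $A$, contradicting property $P$; the case $\frac94 z = 2z'$ I would rule out via Lemma \ref{basicInteger}(3) as in the analogous earlier arguments.

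The main obstacle, and the heart of the lemma, is showing $B_3\cap A'''=\emptyset$. Recall $A'''=\frac13\cdot\left(\left(A_{(\frac12,1]}+A_{(\frac12,1]}\right)\cap 3\cdot\mathbf N\right)$, so it suffices to show: if $3b\in A_{(\frac12,1]}+A_{(\frac12,1]}$ for $b\in B_3$, we get a contradiction with property $P$. The strategy mirrors the proof that $B_1\cap A'''=\emptyset$ (the paragraph above \eqref{dyadicembed}): for each of the four types of element $b\in B_3$, I would exhibit a multiple $mb$ of $b$ that equals $3b'$ for the appropriate $b'$ and is a genuine multiple of some element $a\in A\cap\left[\frac23 n\right]$, then if $3b=x+y$ with $x,y\in A_{(\frac12,1]}$ and $a$ divides $3b$, property $P$ forces $\min(x,y)\le a$, and then $\max(x,y)=3b-\min(x,y)\ge 3b-a\ge 3a-a=2a$, but also $\max(x,y)>\frac n2$ and $a\le\frac{n}{?}$... — more precisely I would track the size of $a$ relative to $b$: in each piece, $b$ is of the form (power of $2$ or $\frac94$ times) $3^{m_a}a$, so $a\le b$ and moreover $3b\ge 3a$, hence $\max(x,y)\ge 2a$; then since $\max(x,y)\le n$ and $\min(x,y)>\frac n2$ one gets $2b\ge 3b-\min(x,y)=\max(x,y)>\frac n2$ combined with $3b\le 2n$... the cleanest finish is: from $y\le a\le b$ we get $x=3b-y\ge 2b$, and since $b>\frac n3$ this gives $x>\frac{2n}{3}$ while also $x=3b-y>3b-b=2b>\frac{2n}{3}$, and since $x\in A_{(\frac12,1]}$ we need $x\le n$, so $2b\le x\le n$ forcing $b\le\frac n2$, which is consistent — so I must instead push to get $x>n$. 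This works precisely when $b>\frac n3$: then $x\ge 2b>\frac{2n}{3}$ is not yet enough, but if additionally $y\le a$ and $a$ is small relative to $b$ we win; the resolution is that in the $B_1$ case one had $b_1>\frac n3$ and $y\le b_1$ giving $x\ge 2b_1>\frac{2n}{3}$ and then the contradiction came from $x\ge 2b_1\ge 2y>n$ using $y>\frac n2$. For $B_3$ the same works since every $b\in B_3$ satisfies $b>\frac n3$, hence if $y\le b$ (which follows because $3b$ is a multiple of some $a\in A$ with $a\le b$, and property $P$ applied to $a\mid x+y$ with $x,y>a$ being forbidden forces $\min(x,y)\le a\le b$), then $x=3b-y\ge 2b>\frac{2n}{3}\ge$ ... and then $x\ge 2y>n$ since $y>\frac n2$, the desired contradiction. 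I would carry this out case by case over the four pieces of \eqref{B_3defi}, in each case first identifying the element $a\in A\cap\left[\frac23 n\right]$ whose multiple $3b$ is (for $b=2z$, $z=3^{m_a}a$ or $z=2\cdot3^{m_a}a$, so $3b=12\cdot 3^{m_a}a$ or $6\cdot 3^{m_a}a$, a multiple of $a$ with $a\le z\le b$; for $b\in Z_{(\frac13,\frac12]}$, $b=3^{m_a}a$ or $2\cdot 3^{m_a}a$ so again $3b$ is a multiple of $a\le b$; for $b\in\frac94\cdot Z_B$, $3b=\frac{27}{4}z$ with $z\in Z_B$ even, $z=3^{m_a}a$ or $2\cdot3^{m_a}a$, and $\frac{27}{4}z$ is an integer multiple of $a$ with $a\le z\le b$; for $b\in A_{(\frac12,\frac23]}$, $b\in A$ itself so $3b$ is a multiple of $b\le b$). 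In every case $a\le b$, so the $B_1$-style argument closes. I expect the fiddly part to be confirming integrality and the exact multiple in the $\frac94\cdot Z_B$ branch and making the $a\le b$ bookkeeping airtight, but no genuinely new idea beyond the $B_1$ argument and Lemma \ref{basicInteger} is needed.
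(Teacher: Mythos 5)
Your overall strategy---verify the range, the cardinality via pairwise disjointness of the six constituent pieces, and the disjointness from $A'''$ by a $B_1$-style argument---matches the paper, and your handling of the pieces $2\cdot Z_*$, $Z_{(\frac{1}{3},\frac{1}{2}]}$ and $A_{(\frac{1}{2},\frac{2}{3}]}$ is sound. But there is a genuine gap in your treatment of $\frac{9}{4}\cdot Z_B$, and it infects all three assertions of the lemma. You try to extract the structure of $\frac{9z}{4}$ from the form of $z$ itself, culminating in ``$z=3^{m_a}a$ or $2\cdot 3^{m_a}a$, and $\frac{27}{4}z$ is an integer multiple of $a$.'' This fails on parity grounds: if $z=3^{m_a}a$ then $\frac{27}{4}z=\frac{27\cdot 3^{m_a}}{4}\cdot a$, and if $z=2\cdot 3^{m_a}a$ then $\frac{27}{4}z=\frac{27\cdot 3^{m_a}}{2}\cdot a$; in neither case is the scalar an integer, so $\frac{27}{4}z$ is not exhibited as an integer multiple of $a$. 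Your integrality claim is also unsupported: you assert that elements of $Z\cap\left(\frac{n}{3},\frac{n}{2}\right]$ arising as $\frac{3z}{2}$ with $z$ even must themselves be even, but that interval contains elements of $Z$ of both forms $3^{m_a}a$ and $2\cdot 3^{m_a}a$ (the first of which can be odd), so this needs an actual argument. Finally, you never bound the range of $\frac{9}{4}\cdot Z_B$: the crude estimate $\frac{3z}{2}\in\left(\frac{n}{3},\frac{n}{2}\right]$ only gives $\frac{9z}{4}\in\left(\frac{n}{2},\frac{3n}{4}\right]$, which overshoots $\frac{2n}{3}$, so your concluding ``Combining, $B_3\subset\left(\frac{n}{3},\frac{2n}{3}\right]$'' does not follow from what you wrote.

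The missing idea is to analyze the form of $\frac{3z}{2}$ rather than that of $z$. Since $\frac{3z}{2}\in Z$, it equals $3^{m_a}a$ or $2\cdot 3^{m_a}a$ for some $a\in A_{[\frac{1}{2}]}$, and a short case analysis on the form of $z$ together with Lemma \ref{basicInteger} rules out $\frac{3z}{2}=3^{m_a}a$. Hence $\frac{3z}{2}=2\cdot 3^{m_a}a$, and by the construction \eqref{Zdefi} the numbers of this form lie in $\left(\frac{n}{3},\frac{4n}{9}\right]$. This one structural fact closes all three gaps at once: $\frac{9z}{4}=3^{m_a+1}a$ is an integer; it lies in $\frac{3}{2}\cdot\left(\frac{n}{3},\frac{4n}{9}\right]=\left(\frac{n}{2},\frac{2n}{3}\right]$; and it is a multiple of $a\in A_{[\frac{1}{2}]}$, which is the correct small divisor both for your $A'''$-disjointness step and for ruling out coincidences such as $\frac{9}{4}z=2z'$ via Lemma \ref{basicInteger}.
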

\begin{proof}
Recall that $|Z|=\left|A\cap\left[\frac{n}{2}\right]\right|$ by \eqref{Zsize} so $\left|A_{(\frac{1}{2},\frac{2}{3}]}\right|+|Z|=\left|A\cap\left(\frac{n}{2},\frac{2n}{3}\right]\right|+\left|A\cap\left[\frac{n}{2}\right]\right|=\left|A_{[\frac{2}{3}]}\right|$. Hence, if we show that the sets $2\!\cdot\!Z_{(\frac{2}{9},\frac{1}{3}]}^{1(2)},2\!\cdot\!Z_B, 2\!\cdot\!Z_G, Z_{(\frac{1}{3},\frac{1}{2}]},\frac{9}{4}\!\cdot\!Z_B$ and $A_{(\frac{1}{2},\frac{2}{3}]}$ are pairwise disjoint, then by \eqref{B_3defi} we get that $B_3$ has size 
\begin{align*}
|B_3|&=\left|Z_{(\frac{2}{9},\frac{1}{3}]}^{1(2)}\right|+|Z_B|+|Z_G|+\left|Z_{(\frac{1}{3},\frac{1}{2}]}\right|+|Z_B|+\left|A_{(\frac{1}{2},\frac{2}{3}]}\right|\\
&=|Z|+|Z_B|+\left|A_{(\frac{1}{2},\frac{2}{3}]}\right|\\
&=\left|A_{[\frac{2}{3}]}\right|+|Z_B|
\end{align*} as desired.

\bigskip

First, it is easy to see from the definition \eqref{Zpart} that $2\!\cdot\!Z_{(\frac{2}{9},\frac{1}{3}]}^{1(2)},2\!\cdot\!Z_B, 2\!\cdot\!Z_G, Z_{(\frac{1}{3},\frac{1}{2}]}$ and $A_{(\frac{1}{2},\frac{2}{3}]}=A\cap\left(\frac{n}{2},\frac{2n}{3}\right]$ are integer subsets of $\left(\frac{n}{3},\frac{2n}{3}\right]$. They are pairwise disjoint as by \eqref{Zdefi}, any number in $2\!\cdot\!Z_{(\frac{2}{9},\frac{1}{3}]}^{1(2)},2\!\cdot\!Z_B, 2\!\cdot\!Z_G$ or $Z_{(\frac{1}{3},\frac{1}{2}]}$ is of the form $3^{m_a}a$ or $2\cdot3^{m_a}a$ for some $a\in A_{[\frac{1}{2}]}$ and we recall that incidences of the form $3^{m_a}a=3^{m_b}b$ or $3^{m_a}a=2\cdot3^{m_b}b$ with $a,b\in A$ are impossible for $a\neq b$ by Lemma \ref{basicInteger}. Since every element of $2\!\cdot\!Z_{(\frac{2}{9},\frac{1}{3}]}^{1(2)},2\!\cdot\!Z_B, 2\!\cdot\!Z_G$ or $Z_{(\frac{1}{3},\frac{1}{2}]}$ is a multiple of a number in $A_{[\frac{1}{2}]}$ by definitions \eqref{Zdefi} and \eqref{Zpart}, no such element lies in $A_{(\frac{1}{2},\frac{2}{3}]}$ as $A$ has property P. So $2\!\cdot\!Z_{(\frac{2}{9},\frac{1}{3}]}^{1(2)},2\!\cdot\!Z_B, 2\!\cdot\!Z_G$, $Z_{(\frac{1}{3},\frac{1}{2}]}$ and $A_{(\frac{1}{2},\frac{2}{3}]}$ are pairwise disjoint subsets of $\left(\frac{n}{3},\frac{2n}{3}\right]$. We have also shown that any number in $2\!\cdot\!Z_{(\frac{2}{9},\frac{1}{3}]}^{1(2)},2\!\cdot\!Z_B, 2\!\cdot\!Z_G$, $Z_{(\frac{1}{3},\frac{1}{2}]}$ or $A_{(\frac{1}{2},\frac{2}{3}]}$ is a multiple of some number in $A_{[\frac{2}{3}]}$ so all these sets are disjoint from $A'''$ as we proved in the argument preceding \eqref{dyadicembed} that $A'''$ does not contain a multiple of any number in $A_{[\frac{2}{3}]}$.

\bigskip

It only remains to show that $\frac{9}{4}\!\cdot\!Z_B$ is a set of integers contained in $\left(\frac{n}{3},\frac{2n}{3}\right]$, that it is disjoint from $2\!\cdot\!Z_{(\frac{2}{9},\frac{1}{3}]}^{1(2)},2\!\cdot\!Z_B, 2\!\cdot\!Z_G$, $Z_{(\frac{1}{3},\frac{1}{2}]}$ and $A_{(\frac{1}{2},\frac{2}{3}]}$ and that it is also disjoint from $A'''$. Let us analyse what an element of $Z_B$ looks like. If $z\in Z_B$, then by the definition \eqref{Zpart} we have that $\frac{3z}{2}\in Z\cap\left(\frac{n}{3},\frac{n}{2}\right]$, so we must have that $\frac{3z}{2} = 3^{m_a}a$ or $\frac{3z}{2} = 2\!\cdot\!3^{m_a}a$ for some $a\in A_{[\frac{1}{2}]}$. The first of these is impossible by Lemma \ref{basicInteger}. So $\frac{3z}{2}=2\cdot3^{m_a}a$ for some $a\in A_{[\frac{1}{2}]}$ and this implies that $\frac{3z}{2}=2\!\cdot\!3^{m_a}a\in\left(\frac{n}{3},\frac{4n}{9}\right]$ as one can see from \eqref{Zdefi} that we constructed $Z$ in such a way that any number of the form of $2\!\cdot\!3^{m_a}a$ lies in $\left(\frac{n}{3},\frac{4n}{9}\right]$. The point of all this is that $\frac{9z}{4} = \frac{3}{2}\!\cdot\!2\cdot3^{m_a}a = 3^{m_a+1}a\in\frac{3}{2}\cdot\left(\frac{n}{3},\frac{4n}{9}\right]\subset\left(\frac{n}{2},\frac{2n}{3}\right]$. Hence, $\frac{9}{4}\!\cdot\!Z_B$ is a set of integers contained in $\left(\frac{n}{2},\frac{2n}{3}\right]$ so it is trivially disjoint from $Z_{(\frac{1}{3},\frac{1}{2}]}\subset \left(\frac{n}{3},\frac{n}{2}\right]$. Further note that every element in $\frac{9}{4}\!\cdot\!Z_B$ is a multiple of some $a\in A_{[\frac{1}{2}]}$ of the form $3^{m_a+1}a$ so it cannot lie in $A_{(\frac{1}{2},\frac{2}{3}]}$ as $A$ has property P, and for the same reason $\frac{9}{4}\!\cdot\!Z_B$ is disjoint from $A'''$ as all elements of $A'''$ have a multiple in $A_{(\frac{1}{2},1]}+A_{(\frac{1}{2},1]}$ by definition \eqref{A''definition}. Finally, $\frac{9}{4}\!\cdot\!Z_B$ is disjoint from each of the sets $2\!\cdot\!Z_{(\frac{2}{9},\frac{1}{3}]}^{1(2)},2\!\cdot\!Z_B$ and $2\!\cdot\!Z_G$ as else, recalling the definitions \eqref{Zdefi} and \eqref{Zpart}, we would obtain an equality of the form $2\!\cdot\!3^{m_b}b=3^{m_a+1}a$ for some $a,b\in A$ which is impossible by Lemma \ref{basicInteger}.
\end{proof}

We proved in the Lemma \ref{Ycrucial} that $B_3$ and $A'''$ are disjoint subsets of $\left(\frac{n}{3},\frac{2n}{3}\right]$. As $|B_3| = \left|A_{[\frac{2}{3}]}\right|+|Z_B|$ by Lemma \ref{Ycrucial} we obtain
\begin{equation}
    \left|A_{[\frac{2}{3}]}\right|+|Z_B|+\left|A'''\right|=|B_3|+\left|A'''\right|\leqslant \left\lceil \frac{n}{3}\right\rceil.
\label{goodineq}
\end{equation}
As $A'''=\frac{1}{3}\cdot\left(\left(A_{(\frac{1}{2},1]}+A_{(\frac{1}{2},1]}\right)\cap 3\cdot\mathbf{N}\right)$ by definition \eqref{A''definition}, we can use \eqref{freimaindouble} to get that 
\begin{align*}
    \left|A'''\right|=\left|\left(A_{(\frac{1}{2},1]}+A_{(\frac{1}{2},1]}\right)\cap 3\cdot\mathbf{N}\right|
    \geqslant \left|A_{(\frac{1}{2},1]}^{1(3)}\right|+\left|A_{(\frac{1}{2},1]}^{2(3)}\right|+\min\left(\left|A_{(\frac{1}{2},1]}^{1(3)}\right|,\left|A_{(\frac{1}{2},1]}^{2(3)}\right|\right)-3
\end{align*} and so we deduce from \eqref{goodineq} that
\begin{equation}
    \left|A_{[\frac{2}{3}]}\right|+|Z_B|+\left|A_{(\frac{1}{2},1]}^{1(3)}\right|+\left|A_{(\frac{1}{2},1]}^{2(3)}\right|+\min\left(\left|A_{(\frac{1}{2},1]}^{1(3)}\right|,\left|A_{(\frac{1}{2},1]}^{2(3)}\right|\right)-3\leqslant \left\lceil \frac{n}{3}\right\rceil.
\label{goodineqnew}
\end{equation}
From now on we suppose that $\min\left(\left|A_{(\frac{1}{2},1]}^{1(3)}\right|,\left|A_{(\frac{1}{2},1]}^{2(3)}\right|\right) = \left|A_{(\frac{1}{2},1]}^{2(3)}\right|$ as the proof in the other case is the same after interchanging the roles of $A_{(\frac{1}{2},1]}^{1(3)}$ and $A_{(\frac{1}{2},1]}^{2(3)}$ in what follows. We may assume for the remainder of the proof that
\begin{equation}
    \left|A_{(\frac{1}{2},1]}^{1(3)}\right|+2\left|A_{(\frac{1}{2},1]}^{2(3)}\right|+|Z_B|-3\leqslant \left|A_{(\frac{2}{3},1]}\right|,
\label{Mainbound}
\end{equation}
for if this inequality did not hold, then as $|A|=\left|A_{[\frac{2}{3}]}\right|+\left|A_{(\frac{2}{3},1]}\right|$, we would get \begin{align*}
    |A|&= \left|A_{[\frac{2}{3}]}\right|+\left|A_{(\frac{2}{3},1]}\right|\\
    &\leqslant \left|A_{[\frac{2}{3}]}\right|+\left|A_{(\frac{1}{2},1]}^{1(3)}\right|+2\left|A_{(\frac{1}{2},1]}^{2(3)}\right|+|Z_B|-3\leqslant\left\lceil \frac{n}{3}\right\rceil,
\end{align*}
since the final inequality is precisely inequality \eqref{goodineqnew}.
\medskip

The final ingredient required to finish the argument is a good bound on $\left|A_{[\frac{1}{2}]}\right|$, so this is our next goal. Fortunately, it turns out that a relatively simple argument suffices here. We begin by showing that $\left|A_{(\frac{1}{2},1]}^{1(3)}\right|=\max\left(\left|A_{(\frac{1}{2},1]}^{1(3)}\right|,\left|A_{(\frac{1}{2},1]}^{2(3)}\right|\right)$ is fairly large. By \eqref{Mainbound} and as $\left|A_{(\frac{1}{2},1]}^{1(3)}\right|+\left|A_{(\frac{1}{2},1]}^{2(3)}\right|\geqslant\frac{2\left|A_{(\frac{1}{2},1]}\right|}{3}$ by our assumption in Subcase 3.1, we have that
\begin{align*}
   \left|A_{(\frac{2}{3},1]}\right|&\geqslant \left|A_{(\frac{1}{2},1]}^{1(3)}\right|+2\left|A_{(\frac{1}{2},1]}^{2(3)}\right|+|Z_B|-3 \\
   &\geqslant \frac{2\left|A_{(\frac{1}{2},1]}\right|}{3}+\left|A_{(\frac{1}{2},1]}^{2(3)}\right|+|Z_B|-3.
\end{align*}
After expanding $\left|A_{(\frac{1}{2},1]}\right|=\left|A_{(\frac{1}{2},\frac{2}{3}]}\right|+\left|A_{(\frac{2}{3},1]}\right|$, this rearranges to
\begin{equation}
    \left|A_{(\frac{1}{2},1]}^{2(3)}\right|\leqslant\frac{\left|A_{(\frac{2}{3},1]}\right|}{3}-\frac{2\left|A_{(\frac{1}{2},\frac{2}{3}]}\right|}{3}-|Z_B|+3.
\label{mamama0}
\end{equation}
Hence, starting with the assumption of Subcase 3.1 and using the lower bound \eqref{mamama0} yields
\begin{align*}
    \left|A_{(\frac{1}{2},1]}^{1(3)}\right|&\geqslant \frac{2\left|A_{(\frac{1}{2},1]}\right|}{3}-\left|A_{(\frac{1}{2},1]}^{2(3)}\right|\\
    &\geqslant \frac{2\left|A_{(\frac{1}{2},1]}\right|}{3}+\frac{2\left|A_{(\frac{1}{2},\frac{2}{3}]}\right|}{3}-\frac{\left|A_{(\frac{2}{3},1]}\right|}{3}+|Z_B|-3\\
    &= \frac{\left|A_{(\frac{1}{2},1]}\right|}{3}+\left|A_{(\frac{1}{2},\frac{2}{3}]}\right|+|Z_B|-3.
\end{align*}
We have that $\left|A_{(\frac{1}{2},\frac{2}{3}]}\right|+|Z_B|\geqslant \frac{n}{24}-11$ by Corollary \ref{cor1} and that $\left|A_{(\frac{1}{2},1]}\right|\geqslant \left(\frac{1}{3}-\delta\right)\frac{n}{2}$  by \eqref{inductionbound}, so using this in the inequality above gives
\begin{equation}
    \left|A_{(\frac{1}{2},1]}^{1(3)}\right|\geqslant \left(\frac{1}{3}-\delta\right)\frac{n}{6}+\frac{n}{24}-14>\frac{n}{12}+\frac{n}{100}
\label{A_u,1dense}
\end{equation}
if we choose $\delta$ sufficiently small.
This shows that $A_{(\frac{1}{2},1]}^{1(3)}$ has density significantly greater than a half on the arithmetic progression of numbers in $\left(\frac{n}{2},n\right]$ which are $1\!\!\mod 3$. We can therefore apply Lemma \ref{doublinglemmad} to $A_{(\frac{1}{2},1]}^{1(3)}$ to obtain the lemma below. First, we recall the construction of the set $B_{\frac{1}{2}}$ from the proof of Lemma \ref{lemma4}. Note that for every $a\in A_{[\frac{1}{2}]}$ there is a power of $2$, say $2^{p_a}$, such that $2^{p_a}a\in \left(\frac{n}{4},\frac{n}{2}\right]$ and let $B_{\frac{1}{2}}\vcentcolon=\left\{2^{p_a}a:a\in A_{[\frac{1}{2}]}\right\}\subset\left(\frac{n}{4},\frac{n}{2}\right]$. The map $a\mapsto 2^{p_a}a$ is injective by Lemma \ref{basicInteger} so \begin{equation}
\left|B_{\frac{1}{2}}\right|=\left|A_{[\frac{1}{2}]}\right|.
\label{B_1/2size}
\end{equation}
\begin{lemma}
Let $B_{\frac{1}{2}}^{i(3)}=B_{\frac{1}{2}}\cap(i+3\!\cdot\!\mathbf{N})$ for $i=0,1,2$. Then
\begin{align}
    \left|B_{\frac{1}{2}}^{2(3)}\right|&\leqslant \frac{n}{12}+2-\frac{\left|A_{(\frac{1}{2},1]}^{1(3)}\right|}{2} \label{final1}\\
    \left|B_{\frac{1}{2}}^{1(3)}\right|&\leqslant \frac{n}{10}+2-\frac{2\left|A_{(\frac{1}{2},1]}^{1(3)}\right|}{5}. \label{final2}
\end{align}
Furthermore, if case $(3)$ in Lemma \ref{lemmaZ} holds, then either $\left|A_{(\frac{2}{3},1]}\right|\leqslant \frac{n}{9}+4$, or we additionally have the following inequality
\begin{equation}
    \left|B_{\frac{1}{2}}^{1(3)}\right|\leqslant \frac{11n}{90}+4-\frac{2\left|A_{(\frac{1}{2},1]}^{1(3)}\right|}{5}-\frac{\left|A_{(\frac{2}{3},1]}\right|}{6}.
    \label{final2case3}
\end{equation}
\label{B_lbound}
\end{lemma}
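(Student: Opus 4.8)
The plan is to derive \eqref{final1} and \eqref{final2} directly from the doubling Lemma \ref{doublinglemmad} applied to $A_{(\frac{1}{2},1]}^{1(3)}$ together with the multiplication Lemma \ref{basicmultl} applied to $B_{\frac{1}{2}}^{2(3)}$ and $B_{\frac{1}{2}}^{1(3)}$, and then to obtain \eqref{final2case3} by feeding the long progression from case $(3)$ of Lemma \ref{lemmaZ} into the argument for \eqref{final2}.

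For \eqref{final1}: by \eqref{A_u,1dense} the set $A_{(\frac{1}{2},1]}^{1(3)}$ has density well above $\tfrac12$ on the progression of integers $\equiv 1 \pmod 3$ in $\left(\frac{n}{2},n\right]$, so Lemma \ref{doublinglemmad} with $d=3$ and $q=4$ shows that $A_{(\frac{1}{2},1]}^{1(3)}+A_{(\frac{1}{2},1]}^{1(3)}$ contains at least $\frac{\left|A_{(\frac{1}{2},1]}^{1(3)}\right|}{2}-1$ integers in the class $8\pmod{12}$ (the unique class that is $\equiv 2 \pmod 3$, since $1+1\equiv 2$, and $\equiv 0 \pmod 4$), all lying in $(n,2n]$ and hence in $A_{(\frac{1}{2},1]}+A_{(\frac{1}{2},1]}$. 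Since $4\!\cdot\!B_{\frac{1}{2}}^{2(3)}\subset (n,2n]\cap(8+12\!\cdot\!\mathbf{N})$ consists of multiples of elements of $A_{[\frac{1}{2}]}$, Lemma \ref{basicmultl} with $\alpha=\tfrac12$, $k=4$, $q=12$ and $I=(n,2n]$ gives $\left|B_{\frac{1}{2}}^{2(3)}\right|+\frac{\left|A_{(\frac{1}{2},1]}^{1(3)}\right|}{2}-1<\frac{n}{12}+1$, which is \eqref{final1}. The same argument with $d=3$, $q=5$, $k=5$, $q=15$ and $I=\left(n,\frac{5n}{2}\right]$ (so that $\frac{|I|}{q}=\frac{n}{10}$) proves \eqref{final2}: now $5\!\cdot\!B_{\frac{1}{2}}^{1(3)}\subset \left(\frac{5n}{4},\frac{5n}{2}\right]\cap(5+15\!\cdot\!\mathbf{N})$ is packed against the at least $\frac{2\left|A_{(\frac{1}{2},1]}^{1(3)}\right|}{5}-1$ elements of $A_{(\frac{1}{2},1]}^{1(3)}+A_{(\frac{1}{2},1]}^{1(3)}$ that are $\equiv 0 \pmod 5$.

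For \eqref{final2case3}, suppose that case $(3)$ of Lemma \ref{lemmaZ} holds, so $A_{(\frac{2}{3},1]}+A_{(\frac{2}{3},1]}$ contains an arithmetic progression $P$ of multiples of $4$ inside $\left(\frac{4n}{3},2n\right]$ with $|P|\geqslant\frac{\left|A_{(\frac{2}{3},1]}\right|}{2}-1$; its common difference is a multiple of $4$, say $4e$ with $e\geqslant 1$, so $\frac{1}{4}\!\cdot\!P$ is a progression of common difference $e$ and length $|P|$ contained in $\left(\frac{n}{3},\frac{n}{2}\right]$. If $3\mid e$ then $\frac{1}{4}\!\cdot\!P$ has common difference at least $3$ inside an interval of length $\frac{n}{6}$, forcing $|P|\leqslant\frac{n}{18}+1$ and hence $\left|A_{(\frac{2}{3},1]}\right|\leqslant\frac{n}{9}+4$, which is the first alternative. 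Otherwise $3\nmid e$, and then $\frac{1}{4}\!\cdot\!P$ meets the class $1\pmod 3$ in at least $\left\lfloor\frac{|P|}{3}\right\rfloor\geqslant\frac{\left|A_{(\frac{2}{3},1]}\right|}{6}-1$ points. Each such point $q$ satisfies $4q\in A_{(\frac{2}{3},1]}+A_{(\frac{2}{3},1]}$, a sum of two elements of $A$ exceeding $\frac{2n}{3}>\frac{n}{2}\geqslant q$, so by property P we get $q\notin A$ and, since any $a\in A_{[\frac{1}{2}]}$ dividing $q$ would then divide this sum, also $q\notin B_{\frac{1}{2}}$. Hence $5\!\cdot\!\left(\frac{1}{4}\!\cdot\!P\cap(1+3\!\cdot\!\mathbf{N})\right)\subset\left(\frac{5n}{3},\frac{5n}{2}\right]\cap(5+15\!\cdot\!\mathbf{N})$ is a third set that we add to the two already packed in the proof of \eqref{final2}: it is disjoint from $5\!\cdot\!B_{\frac{1}{2}}^{1(3)}$ for the same reason $q\notin B_{\frac{1}{2}}$, and its only possible overlap with $\left(A_{(\frac{1}{2},1]}^{1(3)}+A_{(\frac{1}{2},1]}^{1(3)}\right)\cap 5\!\cdot\!\mathbf{N}\subset(n,2n]$ lies in $\left(\frac{5n}{3},2n\right]$, i.e. among those $5q$ with $q\in\left(\frac{n}{3},\frac{2n}{5}\right]$, of which there are at most $\frac{n}{45}+1$. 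Packing all three sets into $I=\left(n,\frac{5n}{2}\right]$ and using this overlap bound yields \eqref{final2case3}.

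The residue-class bookkeeping and the invocations of Lemmas \ref{doublinglemmad} and \ref{basicmultl} are routine; the one place requiring care is \eqref{final2case3}, namely checking that $\frac{1}{4}\!\cdot\!P$ is disjoint from $B_{\frac{1}{2}}$, bounding (when $3\nmid e$) the overlap of the $P$-part with the sumset part by the length of the short interval $\left(\frac{n}{3},\frac{2n}{5}\right]$, and observing that the case $3\mid e$ forces $P$ to be short enough that $\left|A_{(\frac{2}{3},1]}\right|\leqslant\frac{n}{9}+4$ — precisely the escape clause in the statement.
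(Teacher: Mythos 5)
Your proof is correct. For \eqref{final1} you use exactly the paper's argument. For \eqref{final2} and \eqref{final2case3} you arrive at the same bounds by a genuinely different packing geometry, so a comparison is worthwhile. The paper proves \eqref{final2} by choosing $I=(n,2n]$ and $q=15$, which forces a split of $B_{\frac12}^{1(3)}$ into the parts below and above $\frac{2n}{5}$ (only $5\cdot b$ with $b\leqslant \frac{2n}{5}$ lands in $(n,2n]$); the part above $\frac{2n}{5}$ is handled by a trivial count. You instead enlarge the interval to $I=\left(n,\frac{5n}{2}\right]$ so that $5\cdot B_{\frac12}^{1(3)}$ fits whole, and a single application of Lemma \ref{basicmultl} gives \eqref{final2} directly with the stated constant. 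For \eqref{final2case3} the paper re-uses the same split: it dilates $B_{\frac12}^{1(3)}\cap\left(\frac{2n}{5},\frac{n}{2}\right]$ by $4$ and packs against the $4\bmod 12$ elements of the progression $Q'$ inside $\left(\frac{4n}{3},2n\right]$, then adds the mod-$15$ bound on the lower part. You keep $B_{\frac12}^{1(3)}$ whole, pass to $\frac14 P\subset\left(\frac{n}{3},\frac{n}{2}\right]$, select its $1\bmod 3$ part (which is disjoint from $B_{\frac12}$ by property P), dilate by $5$ so it lies in the same residue class $5\bmod 15$ and the same interval $\left(n,\frac{5n}{2}\right]$ as the other two sets, and then do a three-way packing with an explicit inclusion-exclusion correction for the overlap between the $P$-piece and the sumset piece, which you correctly localize to $\left(\frac{5n}{3},2n\right]$ and bound by $\frac{n}{45}+1$. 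Your case split on the common difference $4e$ of $P$ (namely $3\mid e$ vs.\ $3\nmid e$, i.e.\ $12\mid 4e$ vs.\ $12\nmid 4e$) is slightly different from the paper's ($4e\geqslant 12$ vs.\ $4e\in\{4,8\}$), but both exhaust all cases and both yield $\left|A_{(\frac{2}{3},1]}\right|\leqslant \frac{n}{9}+4$ in the escape case; in fact your ``$3\nmid e$'' branch correctly handles the large-$e$, $3\nmid e$ possibilities that the paper's ``$d\geqslant 12$'' escape absorbs, since an arithmetic progression of common difference $e$ coprime to $3$ still hits $1\bmod 3$ with density $\tfrac13$. The arithmetic checks out: $\frac{n}{10}+\frac{n}{45}=\frac{11n}{90}$, and the constants match \eqref{final2case3}. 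Both routes buy the same inequality; yours avoids the two-interval, two-modulus bookkeeping at the cost of an overlap term, which is arguably the cleaner way to present it.
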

\begin{proof}
By \eqref{A_u,1dense}, $A_{(\frac{1}{2},1]}^{1(3)}$ has density much greater than a half on the progression $\left(\frac{n}{2},n\right]\cap (1+3\cdot\!\mathbf{N})$ so it satisfies the assumptions of Lemma \ref{doublinglemmad} with $d=3$ and $q=4$ or $q=5$. By applying Lemma \ref{doublinglemmad} with $q=4$, we get that $A_{(\frac{1}{2},1]}^{1(3)}+A_{(\frac{1}{2},1]}^{1(3)}$ contains at least $\frac{\left|A_{(\frac{1}{2},1]}^{1(3)}\right|}{2}-1$ multiples of 4.
As all numbers in $A_{(\frac{1}{2},1]}^{1(3)}+A_{(\frac{1}{2},1]}^{1(3)}$ are $2\!\!\mod 3$ and lie in $(n,2n]$ we get 
\begin{align*}
\left|\left(A_{(\frac{1}{2},1]}^{1(3)}+A_{(\frac{1}{2},1]}^{1(3)}\right)\cap(8+12\cdot \mathbf{N})\right|\geqslant \frac{\left|A_{(\frac{1}{2},1]}^{1(3)}\right|}{2}-1.
\end{align*} The set $4\cdot B_{\frac{1}{2}}^{2(3)}$ also consists only of numbers in $(n,2n]$ which are $8\!\!\mod 12$, so Lemma \ref{basicmultl} gives $\left|B_{\frac{1}{2}}^{2(3)}\right|+\frac{\left|A_{(\frac{1}{2},1]}^{1(3)}\right|}{2}-1\leqslant \frac{n}{12}+1$ which is the desired inequality \eqref{final1}. By applying Lemma \ref{doublinglemmad} with $q=5$, $A_{(\frac{1}{2},1]}^{1(3)}+A_{(\frac{1}{2},1]}^{1(3)}$ contains at least $\frac{2\left|A_{(\frac{1}{2},1]}^{1(3)}\right|}{5}-1$ multiples of $5$ and as every number in $A_{(\frac{1}{2},1]}^{1(3)}+A_{(\frac{1}{2},1]}^{1(3)}$ is $2\!\!\mod 3$, we get that
\begin{align*}
\left|\left(A_{(\frac{1}{2},1]}^{1(3)}+A_{(\frac{1}{2},1]}^{1(3)}\right)\cap(5+15\cdot \mathbf{N})\right|\geqslant \frac{2\left|A_{(\frac{1}{2},1]}^{1(3)}\right|}{5}-1.
\end{align*}
The set $5\cdot\left( B_{\frac{1}{2}}^{1(3)}\cap\left[\frac{2n}{5}\right]\right)$ consists only of numbers in $\left(n,2n\right]$ which are $5\!\!\mod 15$ so Lemma \ref{basicmultl} gives \begin{equation}
\left|B_{\frac{1}{2}}^{1(3)}\cap\left[\frac{2n}{5}\right]\right|+\frac{2\left|A_{(\frac{1}{2},1]}^{1(3)}\right|}{5}-1< \frac{n}{15}+1.
\label{lemmaB_bound}
\end{equation} Combining this with a trivial bound $\left|B_{\frac{1}{2}}^{1(3)}\cap\left(\frac{2n}{5},\frac{n}{2}\right]\right|< \frac{n}{30}+1$ on the number of integers in $\left(\frac{2n}{5},\frac{n}{2}\right]$ that are $1\!\!\mod 3$, we get the required inequality \eqref{final2}.
\medskip

Our final task is to prove that either $\left|A_{(\frac{2}{3},1]}\right|\leqslant \frac{n}{9}+4$ or \eqref{final2case3} follows under the assumption that (3) in Lemma \ref{lemmaZ} holds. So assume that case (3) in Lemma \ref{lemmaZ} holds, then $\left(A_{(\frac{2}{3},1]}+A_{(\frac{2}{3},1]}\right)\cap (4\!\cdot\!\mathbf{N})$ contains an arithmetic progression $Q'$ of size $|Q'|\geqslant\frac{\left|A_{(\frac{2}{3},1]}\right|}{2}-1$. If the common difference is at least 12, then $|Q'|\leqslant  \frac{n}{18}+1$ as $Q'\subset A_{(\frac{2}{3},1]}+A_{(\frac{2}{3},1]}\subset\left(\frac{4n}{3},2n\right]$, so then we would get $\frac{\left|A_{(\frac{2}{3},1]}\right|}{2}-1\leqslant|Q'|\leqslant  \frac{n}{18}+1$ implying the desired conclusion that $\left|A_{(\frac{2}{3},1]}\right|\leqslant \frac{n}{9}+4$. Otherwise the common difference is $4$ or $8$ so that the progression $Q'\subset \left(A_{(\frac{2}{3},1]}+A_{(\frac{2}{3},1]}\right)\cap (4\!\cdot\!\mathbf{N})$ contains at least $\frac{\left|A_{(\frac{2}{3},1]}\right|}{6}-1$ numbers which are $4\!\! \mod 12$. Note that $4\cdot\left( B_{\frac{1}{2}}^{1(3)}\cap\left(\frac{2n}{5},\frac{n}{2}\right]\right)$ is also a subset of $\left(\frac{4n}{3},2n\right]$ containing only numbers which are $4\!\! \mod 12$ so Lemma \ref{basicmultl} yields
\begin{equation*}
    \left| B_{\frac{1}{2}}^{1(3)}\cap\left(\frac{2n}{5},\frac{n}{2}\right]\right|\leqslant \frac{n}{18}+2-\frac{\left|A_{(\frac{2}{3},1]}\right|}{6}.
\end{equation*}
Combining this improved bound with \eqref{lemmaB_bound} yields the desired inequality \eqref{final2case3}
\begin{align*}
    \left| B_{\frac{1}{2}}^{1(3)}\right|&= \left| B_{\frac{1}{2}}^{1(3)}\cap\left(\frac{2n}{5},\frac{n}{2}\right]\right|+\left|B_{\frac{1}{2}}^{1(3)}\cap\left[\frac{2n}{5}\right]\right|\\
    &\leqslant \frac{n}{18}+2-\frac{\left|A_{(\frac{2}{3},1]}\right|}{6}+\frac{n}{15}+2-\frac{2\left|A_{(\frac{1}{2},1]}^{1(3)}\right|}{5} \\
    &\leqslant \frac{11n}{90}+4-\frac{2\left|A_{(\frac{1}{2},1]}^{1(3)}\right|}{5}-\frac{\left|A_{(\frac{2}{3},1]}\right|}{6}.
\end{align*}
\end{proof}

Now we have all the main ingredients in place to finish the argument. By \eqref{B_1/2size}, we have \begin{align*}
    |A| &= \left|A_{[\frac{1}{2}]}\right|+\left|A_{(\frac{1}{2},1]}\right| = \left|B_{\frac{1}{2}}\right|+\left|A_{(\frac{1}{2},1]}\right|\\
     &= \left|B_{\frac{1}{2}}^{1(3)}\right|+\left|B_{\frac{1}{2}}^{2(3)}\right|+\left|B_{[\frac{1}{2}]}^{0(3)}\right|+\left|A_{(\frac{1}{2},1]}^{0(3)}\right|+\left|A_{(\frac{1}{2},1]}^{1(3)}\right|+\left|A_{(\frac{1}{2},1]}^{2(3)}\right|\\
     &= \left|B_{\frac{1}{2}}^{1(3)}\right|+\left|B_{\frac{1}{2}}^{2(3)}\right|+\left|A\cap(3\!\cdot\!\mathbf{N})\right|+\left|A_{(\frac{1}{2},1]}^{1(3)}\right|+\left|A_{(\frac{1}{2},1]}^{2(3)}\right|
\end{align*}
so using the induction hypothesis \eqref{inductionboundmultiples} to bound $\left|A\cap(3\!\cdot\!\mathbf{N})\right|$, Lemma \ref{B_lbound} to bound $\left|B_{\frac{1}{2}}^{1(3)}\right|$ and $\left|B_{\frac{1}{2}}^{2(3)}\right|$ and \eqref{Mainbound} to bound $\left|A_{(\frac{1}{2},1]}^{1(3)}\right|$, we get
\begin{align}
    |A| &\leqslant \left(\frac{n}{12}+2-\frac{\left|A_{(\frac{1}{2},1]}^{1(3)}\right|}{2}\right) +\left(
    \frac{n}{10}+2-\frac{2\left|A_{(\frac{1}{2},1]}^{1(3)}\right|}{5}\right)+\frac{n}{9}+C\nonumber\\
    &\,\,\,\,\,\,+ \left(\left|A_{(\frac{2}{3},1]}\right|+3-2\left|A_{(\frac{1}{2},1]}^{2(3)}\right|-\left|Z_B\right|\right)+\left|A_{(\frac{1}{2},1]}^{2(3)}\right|\nonumber\\
     &= \frac{53n}{180}+C+7+\frac{\left|A_{(\frac{1}{2},1]}^{1(3)}\right|}{10}+ \left|A_{(\frac{2}{3},1]}\right|-\left|A_{(\frac{1}{2},1]}^{1(3)}\right|-\left|A_{(\frac{1}{2},1]}^{2(3)}\right|-\left|Z_B\right|.\label{ja}
\end{align}
Writing $\left|A_{(\frac{2}{3},1]}\right|=\left|A_{(\frac{1}{2},1]}\right|-\left|A_{(\frac{1}{2},\frac{2}{3}]}\right|$ and using that $\left|A_{(\frac{1}{2},1]}^{1(3)}\right|+\left|A_{(\frac{1}{2},1]}^{2(3)}\right|\geqslant \frac{2\left|A_{(\frac{1}{2},1]}\right|}{3}$ in Subcase 3.1 yields
\begin{align}
    |A| &\leqslant \frac{53n}{180}+C+7+\frac{\left|A_{(\frac{1}{2},1]}^{1(3)}\right|}{10}+ \frac{\left|A_{(\frac{1}{2},1]}\right|}{3}-\left|A_{(\frac{1}{2},\frac{2}{3}]}\right|-\left|Z_B\right|.
\label{penu}
\end{align}
$A_{(\frac{1}{2},1]}^{1(3)}$ consists only of numbers in $\left(\frac{n}{2},n\right]$ which are $1\!\!\mod 3$ so we can trivially bound $\left|A_{(\frac{1}{2},1]}^{1(3)}\right|\leqslant  \frac{n}{6}+1$. Using \eqref{A_2lowe} we can also bound $\left|A_{(\frac{1}{2},1]}\right|\leqslant \frac{3\left|A_{(\frac{2}{3},1]}\right|}{2}+1\leqslant \frac{n}{4}+37$ as we assume that $\left|A_{(\frac{2}{3},1]}\right|\leqslant \frac{n}{6}+24$ in Case 3. If one of (1) or (2) in Lemma \ref{lemmaZ} holds, then by Corollary \ref{cor1} the inequality \eqref{cor11} holds so $\left|A_{(\frac{1}{2},\frac{2}{3}]}\right|+\left|Z_B\right|\geqslant\frac{n}{12}-8$ and we can plug this in in \eqref{penu} to get in total that
\begin{align*}
    |A| &\leqslant \frac{53n}{180}+C+7+\frac{\frac{n}{6}+1}{10}+ \frac{\frac{n}{4}+37}{3}-\frac{n}{12}+8 < \frac{14n}{45}+C+30,
\end{align*}
and we are done.
The only remaining case to consider is when (3) in Lemma \ref{lemmaZ} holds, so by Corollary \ref{cor1} we have that $\left|A_{(\frac{1}{2},\frac{2}{3}]}\right|+|Z_B|\geqslant \frac{n}{24}-11$.
As (3) in Lemma \ref{lemmaZ} holds, Lemma \ref{B_lbound} gives either that $\left|A_{(\frac{2}{3},1]}\right|\leqslant \frac{n}{9}+4$ or that the bound \eqref{final2case3} on $\left|B_{\frac{1}{2}}^{1(3)}\right|$ holds. First if $\left|A_{(\frac{2}{3},1]}\right|\leqslant \frac{n}{9}+4$ then by \eqref{A_2lowe} we get $\left|A_{(\frac{1}{2},1]}\right|\leqslant \frac{3\left|A_{(\frac{2}{3},1]}\right|}{2}+1\leqslant \frac{n}{6}+7$. So plugging this in in \eqref{penu} gives
\begin{align*}
     |A| &\leqslant \frac{53n}{180}+C+7+\frac{\left|A_{(\frac{1}{2},1]}^{1(3)}\right|}{10}+ \frac{\left|A_{(\frac{1}{2},\frac{2}{3}]}\right|}{3}-\left|A_{(\frac{1}{2},1]}\right|-\left|Z_B\right|\\
     &\leqslant \frac{53n}{180}+C+7+\frac{\frac{n}{6}+1}{10}+ \frac{\frac{n}{6}+7}{3}-\frac{n}{24}+11\\
     &\leqslant \frac{13n}{40}+C+22,
\end{align*}
as desired.
Finally, we may assume that \eqref{final2case3} holds by Lemma \ref{B_lbound} and then we use it to bound $\left|B_{\frac{1}{2}}^{1(3)}\right|$. We bound the remaining quantities in the same way as we did in \eqref{ja}, meaning that we use the induction hypothesis \eqref{inductionboundmultiples} to bound $\left|A\cap(3\!\cdot\!\mathbf{N})\right|$, Lemma \ref{B_lbound} to bound $\left|B_{\frac{1}{2}}^{2(3)}\right|$ and \eqref{Mainbound} to bound $\left|A_{(\frac{1}{2},1]}^{1(3)}\right|$. This gives
\begin{align*}
    |A| =& \left|B_{\frac{1}{2}}^{1(3)}\right|+\left|B_{\frac{1}{2}}^{2(3)}\right|+\left|A\cap(3\!\cdot\!\mathbf{N})\right|+\left|A_{(\frac{1}{2},1]}^{1(3)}\right|+\left|A_{(\frac{1}{2},1]}^{2(3)}\right| \\
    \leqslant& \left(\frac{11n}{90}+4-\frac{2\left|A_{(\frac{1}{2},1]}^{1(3)}\right|}{5}-\frac{\left|A_{(\frac{2}{3},1]}\right|}{6}\right) +\left(\frac{n}{12}+2-\frac{\left|A_{(\frac{1}{2},1]}^{1(3)}\right|}{2}\right)\\
    &+\frac{n}{9}+C+\left(\left|A_{(\frac{2}{3},1]}\right|-2\left|A_{(\frac{1}{2},1]}^{2(3)}\right|-|Z_B|+3\right)+\left|A_{(\frac{1}{2},1]}^{2(3)}\right|\\
    =& \frac{57n}{180}+C+9+\frac{\left|A_{(\frac{1}{2},1]}^{1(3)}\right|}{10}+ \frac{5\left|A_{(\frac{2}{3},1]}\right|}{6}-\left|A_{(\frac{1}{2},1]}^{1(3)}\right|-\left|A_{(\frac{1}{2},1]}^{2(3)}\right|-\left|Z_B\right|\\
    \leqslant& \frac{19n}{60}+C+9+\frac{\left|A_{(\frac{1}{2},1]}^{1(3)}\right|}{10}+\frac{5\left|A_{(\frac{2}{3},1]}\right|}{6}-\frac{2\left|A_{(\frac{1}{2},1]}\right|}{3}-|Z_B|\\
    =& \frac{19n}{60}+C+9+\frac{\left|A_{(\frac{1}{2},1]}^{1(3)}\right|}{10}+\frac{\left|A_{(\frac{2}{3},1]}\right|}{6}-\frac{2\left|A_{(\frac{1}{2},\frac{2}{3}]}\right|}{3}-|Z_B|\\
    \leqslant& \frac{19n}{60}+C+10+\frac{n}{90}+\frac{\left|A_{(\frac{2}{3},1]}\right|}{6}-\frac{17\left|A_{(\frac{1}{2},\frac{2}{3}]}\right|}{30}-|Z_B|\\
    \leqslant& \frac{59n}{180}+C+10+\frac{\frac{n}{6}+24}{6}-\frac{17}{30}\left(\frac{n}{24}-11\right)\\
    \leqslant& \frac{239n}{720}+C+22,
\end{align*}
using the assumption of Subcase 3.1 that $\left|A_{(\frac{1}{2},1]}^{1(3)}\right|+\left|A_{(\frac{1}{2},1]}^{2(3)}\right|\geqslant\frac{2\left|A_{(\frac{1}{2},1]}\right|}{3}=\frac{2\left|A_{(\frac{2}{3},1]}\right|}{3}+\frac{2\left|A_{(\frac{1}{2},\frac{2}{3}]}\right|}{3}$ for lines 5 and 6 in the display above, for the penultimate inequality that $\left|A_{(\frac{1}{2},1]}^{1(3)}\right|= \left|A_{(\frac{1}{2},\frac{2}{3}]}^{1(3)}\right|+\left|A_{(\frac{2}{3},1]}^{1(3)}\right|\leqslant \left|A_{(\frac{1}{2},\frac{2}{3}]}\right|+\frac{n}{9}+1$ as $A_{(\frac{1}{2},\frac{2}{3}]}^{1(3)}\subset A_{(\frac{1}{2},\frac{2}{3}]}$ and as there are at most $\frac{n}{9}+1$ numbers in $\left(\frac{2n}{3},n\right]$ congruent to $1\!\!\mod 3$, and for the final inequality that $\left|A_{(\frac{1}{2},\frac{2}{3}]}\right|+|Z_B|\geqslant \frac{n}{24}-11$ by (3) in Corollary \ref{cor1} and that $\left|A_{(\frac{2}{3},1]}\right|\leqslant\frac{n}{6}+24 $ in Case 3.
This finishes the proof of Subcase 3.1.
\vfill
\pagebreak

\begin{flushleft}
\textbf{{\large Subcase 3.2: $\left|A_{(\frac{1}{2},1]}^{0(3)}\right|\geqslant \frac{\left|A_{(\frac{1}{2},1]}\right|}{3}$.}}
\end{flushleft}
\medskip

In Subcase 3.1, we applied Theorem \ref{freimain} to the sumset $A_{(\frac{1}{2},1]}^{1(3)}+A_{(\frac{1}{2},1]}^{2(3)}$ and proved the desired bound $|A|\leqslant \max\left(\left\lceil\frac{n}{3}\right\rceil, \left(\frac{1}{3}-\delta\right)n+C\right)$ under both possible conclusions of this theorem. We now apply Theorem \ref{freimain} to the sumset $A_{(\frac{1}{2},1]}^{0(3)}+A_{(\frac{1}{2},1]}^{0(3)}$ instead and we employ a very similar proof strategy as in Subcase 3.1, except that the argument here can be simplified in various places. First suppose that conclusion (1) in Theorem \ref{freimain} holds, so $\left|A_{(\frac{1}{2},1]}^{0(3)}+A_{(\frac{1}{2},1]}^{0(3)}\right|\geqslant 3\left|A_{(\frac{1}{2},1]}^{0(3)}\right|-3$ and the proof is very easy in this case because we immediately obtain many multiples of $3$ in $A_{(\frac{1}{2},1]}+A_{(\frac{1}{2},1]}$.\footnote{In Subcase 3.1, conclusion (1) in Theorem \ref{freimain} gave us that $\left|A_{(\frac{1}{2},1]}^{1(3)}+A_{(\frac{1}{2},1]}^{2(3)}\right|\geqslant\left|A_{(\frac{1}{2},1]}^{1(3)}\right|+\left|A_{(\frac{1}{2},1]}^{2(3)}\right|+\min\left(\left|A_{(\frac{1}{2},1]}^{1(3)}\right|,\left|A_{(\frac{1}{2},1]}^{2(3)}\right|\right)-3$. This provides a weaker lower bound on the number of multiples of $3$ in $A_{(\frac{1}{2},1]}+A_{(\frac{1}{2},1]}$ because $\min\left(\left|A_{(\frac{1}{2},1]}^{1(3)}\right|,\left|A_{(\frac{1}{2},1]}^{2(3)}\right|\right)$ can be small even if $\left|A_{(\frac{1}{2},1]}^{1(3)}\right|+\left|A_{(\frac{1}{2},1]}^{2(3)}\right|\geqslant\frac{2\left|A_{(\frac{1}{2},1]}\right|}{3}$.} Indeed, we then have that \begin{align*} \left|\left(A_{(\frac{1}{2},1]}+A_{(\frac{1}{2},1]}\right)\cap 3\cdot \mathbf{N}\right|&\geqslant \left|A_{(\frac{1}{2},1]}^{0(3)}+A_{(\frac{1}{2},1]}^{0(3)}\right|\\
&\geqslant3\left|A_{(\frac{1}{2},1]}^{0(3)}\right|-3\\
&\geqslant \left|A_{(\frac{1}{2},1]}\right|-3
\end{align*} by the assumption of Subcase 3.2. By the definition \eqref{A''definition} of $A'''$, we deduce the lower bound $\left|A'''\right|\geqslant \left|A_{(\frac{1}{2},1]}\right|-3$. Plugging this bound in in \eqref{dyadicembed} then gives
\begin{align*}
\left\lceil\frac{n}{3}\right\rceil &\geqslant \left|A'''\right|+|B_1|\\
&\geqslant \left|A_{(\frac{1}{2},1]}\right|-3+|B_1|\\
&= \left|A_{(\frac{1}{2},1]}\right|-3+\left|A_{[\frac{2}{3}]}\right|\\
&= |A|-3+\left|A_{(\frac{1}{2},\frac{2}{3}]}\right|
\end{align*} where we used that $|B_1|=\left|A_{[\frac{2}{3}]}\right|$ by \eqref{Adecompo}. So we can plug in the lower bound \eqref{A_2genelowe} on $\left|A_{(\frac{1}{2},\frac{2}{3}]}\right|$ in this inequality and deduce the desired result.

\bigskip

For the remainder of the proof in Subcase 3.2, we may now assume that (2) in Theorem \ref{freimain} holds so that $A_{(\frac{1}{2},1]}^{0(3)}+A_{(\frac{1}{2},1]}^{0(3)}$ contains an arithmetic progression $Q$ of size $2\left|A_{(\frac{1}{2},1]}^{0(3)}\right|-1$ and common difference $d = \gcd_*\left(A_{(\frac{1}{2},1]}^{0(3)}\right)$. The proof in this case is essentially the same as the proof that we used in Subcase 3.1 under the assumption that the sumset $A_{(\frac{1}{2},1]}^{1(3)}+A_{(\frac{1}{2},1]}^{2(3)}$ satisfied conclusion (2) in Theorem \ref{freimain}. Hence, for the proofs of cases 3.2.1, 3.2.2 and 3.2.3 we simply refer back to the the corresponding sections 3.1.1, 3.1.2 and 3.1.3 of Subcase 3.1 while pointing out the minor adaptations of the argument that are required. First we show that $d=3,6$ or $9$. Since $A_{(\frac{1}{2},1]}^{0(3)}$ contains only multiples of 3 by definition, we see that $3|d$. By \eqref{inductionbound}, we may assume that $\left|A_{(\frac{1}{2},1]}\right|>  \left(\frac{1}{3}-\delta\right)\frac{n}{2}$. By the assumption of Subcase 3.2, this implies that \begin{equation}
\left|A_{(\frac{1}{2},1]}^{0(3)}\right|\geqslant \frac{\left|A_{(\frac{1}{2},\frac{2}{3}]}\right|}{3}\geqslant \left(\frac{1}{3}-\delta\right)\frac{n}{6}.
\label{A_u,0bound}
\end{equation} As $A_{(\frac{1}{2},1]}^{0(3)}\subset\left(\frac{n}{2},n\right]$ lies in a progression with common difference $d$, we have that  $\left|A_{(\frac{1}{2},1]}^{0(3)}\right|\leqslant  \left\lceil\frac{n}{2d}\right\rceil$.  Comparing these upper and lower bounds on $\left|A_{(\frac{1}{2},1]}^{0(3)}\right|$ shows that $d=\gcd_*\left(A_{(\frac{1}{2},1]}^{0(3)}\right) = 3, 6$ or $9$, and the remaining part of the proof of Subcase 3.2 consists of dealing with these three cases separately.
\medskip
\begin{flushleft}
\textbf{{3.2.1: Let $\gcd_*\left(A_{(\frac{1}{2},1]}^{0(3)}\right) = 9$.}}
\end{flushleft}
\medskip

In this case, $A_{(\frac{1}{2},1]}^{0(3)}\subset\left(\frac{n}{2},n\right]$ lies in a progression with common difference $d=9$, so suppose that every number in $A_{(\frac{1}{2},1]}^{0(3)}$ is congruent to $a\!\!\mod 9$ for one of $a=0,3$ or $6$. It cannot be the case that $a=0$ as by \eqref{A_u,0bound}, $A_{(\frac{1}{2},1]}^{0(3)}$ would then contain $\left(\frac{1}{3}-\delta\right)\frac{n}{6}$ multiples of 9 in $\left(\frac{n}{2},n\right]$, but it is easy to see that such a set does not have property P for $n$ sufficiently large. Indeed, after dividing by $9$ we obtain the set $\frac{1}{9}\!\cdot\!A_{(\frac{1}{2},1]}^{0(3)}\subset\left(\frac{m}{2},m\right]$ where $m=\left\lfloor \frac{n}{9}\right\rfloor$ and it is enough to show that this set does not have property P. Let $s= \min \frac{1}{9}\!\cdot\!A_{(\frac{1}{2},1]}^{0(3)}$ and as $\left|\frac{1}{9}\!\cdot\!A_{(\frac{1}{2},1]}^{0(3)}\right|\geqslant \left(\frac{1}{3}-\delta\right)\frac{n}{6}$, we have that $\frac{m}{2}<s\leqslant m-\left(\frac{1}{3}-\delta\right)\frac{n}{6}+1< \frac{m}{2}+\frac{m}{100}$ for $\delta>0$ sufficiently small as $m=\left\lfloor \frac{n}{9}\right\rfloor$. The set $\frac{1}{9}\!\cdot\!A_{(\frac{1}{2},1]}^{0(3)}$ also contains at least $\left(\frac{1}{3}-\delta\right)\frac{n}{6}-1>\frac{m}{4}+\frac{m}{200}>\frac{s}{2}$ numbers in $(s,m]\subset(s,2s]$ and hence there there exist $x,y\in \frac{1}{9}\!\cdot\!A_{(\frac{1}{2},1]}^{0(3)}$ with $x,y>s$ and $x+y\equiv 0 \!\! \mod s$ showing that $\frac{1}{9}\!\cdot\!A_{(\frac{1}{2},1]}^{0(3)}$ does not have property P, a contradiction.
So $a=3$ or $a=6$. Then $A_{(\frac{1}{2},1]}^{0(3)}+A_{(\frac{1}{2},1]}^{0(3)}$ contains at least $2\left|A_{(\frac{1}{2},1]}^{0(3)}\right|-1\geqslant \left(\frac{1}{3}-\delta\right)\frac{n}{3}-1$ numbers in $(n,2n]$ which are $6\!\!\mod 9$ if $a=3$, or $3\!\!\mod 9$ if $a=6$. So $A_{(\frac{1}{2},1]}^{0(3)}+A_{(\frac{1}{2},1]}^{0(3)}$ contains all except at most $\left\lceil\frac{n}{9}\right\rceil -\left(\frac{1}{3}-\delta\right)\frac{n}{3}-1\leqslant \frac{\delta n}{3}+2$ of the numbers in $(n,2n]$ which are congruent to $3$ or $6$ modulo $9$. From the definition \eqref{A''definition}, $A'''$ therefore contains all except at most $\frac{\delta n}{3}+2$ of the numbers in $\left(\frac{n}{3},\frac{2n}{3}\right]$ which are $2\!\!\mod 3$ if $a=3$, or $1\!\!\mod 3$ if $a=6$. This is precisely what we deduced in case 3.1.1 when $a_1+a_2\equiv 3,6\!\!\mod 9$, and the argument that we used there works here too. The only modification required is that here we deduce the inequality $\left|A_{(\frac{1}{2},1]}\right|\leqslant3\left\lceil\frac{n}{18}\right\rceil$ by using that $\left|A_{(\frac{1}{2},1]}\right|\leqslant 3\left|A_{(\frac{1}{2},1]}^{0(3)}\right|\leqslant 3\left\lceil\frac{n}{18}\right\rceil$ by the assumption of Subcase 3.2 and as $A_{(\frac{1}{2},1]}$ lies in a progression with common difference $9$ by the assumption of case 3.2.1. \qed

\medskip
\begin{flushleft}
\textbf{{3.2.2: Let $\gcd_*\left(A_{(\frac{1}{2},1]}^{0(3)}\right) = 6$.}}
\end{flushleft}
\medskip

As $\gcd_*\left(A_{(\frac{1}{2},1]}^{0(3)}\right) = 6$, we have that $A_{(\frac{1}{2},1]}^{0(3)}\subset \left(\frac{n}{2},n\right]$ lies in a progression with common difference 6, so either $A_{(\frac{1}{2},1]}^{0(3)}\subset \left(\frac{n}{2},n\right]\cap(6\!\cdot\!\mathbf{N})$ or $A_{(\frac{1}{2},1]}^{0(3)}\subset\left(\frac{n}{2},n\right]\cap(3+6\!\cdot\!\mathbf{N})$. In both cases, we see that the sumset $A_{(\frac{1}{2},1]}^{0(3)}+A_{(\frac{1}{2},1]}^{0(3)}$ consists of multiples of $6$ only, and hence the progression $Q\subset A_{(\frac{1}{2},1]}^{0(3)}+A_{(\frac{1}{2},1]}^{0(3)}$ that we obtained from the conclusion (2) in Theorem \ref{freimain} is a progression with common difference 6 consisting of multiples of 6. We also have that $|Q|\geqslant 2\left|A_{(\frac{1}{2},1]}^{0(3)}\right|-1\geqslant \frac{2\left|A_{(\frac{1}{2},1]}\right|}{3}-1$ by the assumption of Subcase 3.2. The existence of such a progression $Q\subset A_{(\frac{1}{2},1]}+A_{(\frac{1}{2},1]}$ of size $|Q|\geqslant \frac{2\left|A_{(\frac{1}{2},1]}\right|}{3}-1$ with common difference 6 consisting of multiples of 6 is precisely what we needed to make the argument that we used in the corresponding case 3.1.2 work, so the rest of the proof that $|A|\leqslant \max\left(\left\lceil\frac{n}{3}\right\rceil, \left(\frac{1}{3}-\delta\right)n+C\right)$ in case 3.2.2 is identical to that argument.
\qed
\medskip
\begin{flushleft}
\textbf{{3.2.3: Let $\gcd_*\left(A_{(\frac{1}{2},1]}^{0(3)}\right) = 3$.}}
\end{flushleft}
\medskip

If $\gcd_*\left(A_{(\frac{1}{2},1]}^{0(3)}\right) = 3$, then the progression that we obtain from conclusion (2) in Theorem \ref{freimain} is an arithmetic progression $Q\subset A_{(\frac{1}{2},\frac{2}{3}]}+A_{(\frac{1}{2},\frac{2}{3}]}$ with common difference 3 consisting of multiples of 3 in $(n,2n]$ and having size $|Q|\geqslant 2\left|A_{(\frac{1}{2},1]}^{0(3)}\right|-1\geqslant\frac{2\left|A_{(\frac{1}{2},1]}\right|}{3}-1$. This is precisely the information that we used in case 3.1.3 to prove that $|A|\leqslant\max\left(\left\lceil\frac{n}{3}\right\rceil, \left(\frac{1}{3}-\delta\right)n+C\right)$ and the proof here is exactly the same. \qed
\medskip

Hence, we have proved the desired bound $|A|\leqslant\max\left(\left\lceil\frac{n}{3}\right\rceil, \left(\frac{1}{3}-\delta\right)n+C\right)$ in each of the three cases 3.2.1, 3.2.2 and 3.2.3 thus finishing the proof of Subcase 3.2. This completes the proof of Theorem \ref{theo5}.
\vfill
\pagebreak

\bibliographystyle{plain}
\bibliography{referP}
\bigskip

\noindent
{\sc Mathematical Institute, Andrew Wiles Building, University of Oxford, Radcliffe
Observatory Quarter, Woodstock Road, Oxford, OX2 6GG, UK.}\newline
\href{mailto:benjamin.bedert@magd.ox.ac.uk}{\small benjamin.bedert@magd.ox.ac.uk}
\end{document}